\newcommand\footnoteref[1]{\protected@xdef\@thefnmark{\ref{#1}}\@footnotemark}
\crefname{figure}{Figure}{Figures}
\numberwithin{equation}{section}
\crefname{equation}{equation}{equations}
\crefname{section}{Section}{Sections}
\newtheorem{theorem}{Theorem}[section]
\crefname{theorem}{Theorem}{Theorems}
\crefname{definition}{Definition}{Definitions}
\newtheorem{corollary}[theorem]{Corollary}
\newtheorem{lemma}[theorem]{Lemma}
\crefname{lemma}{Lemma}{Lemmas}
\newtheorem{proposition}[theorem]{Proposition}
\crefname{proposition}{Proposition}{Propositions}
\newtheorem{definition}[theorem]{Definition}
\newtheorem{fact}[theorem]{Fact}
\crefname{fact}{Fact}{Facts}
\crefname{corollary}{Corollary}{Corollaries}
\newtheorem{assumption}{Assumption}
\crefname{assumption}{Assumption}{Assumptions}
\crefname{algorithm}{Algorithm}{Algorithms}
\newcommand{\G}{\mathcal{G}}
\newcommand{\cT}{\mathcal{T}} 
\newcommand{\M}{\mathcal{M}}
\newcommand{\N}{\mathcal{N}}
\newcommand{\cM}{\mathcal{M}}
\newcommand{\cX}{\mathcal{X}}
\newcommand{\bG}{\mathbf{G}}
\newcommand{\U}{\mathbf{u}}
\newcommand{\V}{\mathbf{v}}
\newcommand{\R}{\mathbb{R}}
\newcommand{\E}{\mathbb{E}}
\newcommand{\half}{\frac{1}{2}}
\newcommand{\X}{\mathbf{x}}
\newcommand{\Y}{\mathbf{y}}
\newcommand{\be}{\begin{equation}}
\newcommand{\ee}{\end{equation}}
\newcommand{\ba}{\begin{array}}
\newcommand{\ea}{\end{array}}
\newcommand{\bad}{\begin{aligned}}
\newcommand{\ead}{\end{aligned}}
\newcommand{\normtwo}[1]{\| #1 \|_2}
\newcommand{\normfro}[1]{\| #1 \|_{\text{F}}}
\newcommand{\normfroinf}[1]{\| #1 \|_{\text{F},\infty}}
\newcommand{\inp}[2]{\left\langle #1, #2 \right\rangle}
\newcommand{\argmin}{\mathop{\rm argmin}}
\newcommand{\p}{\mathcal{P}}
\newcommand{\ps}{\mathcal{P_{\St}}}
\newcommand{\dist}{\mathrm{dist}}
\newcommand{\Retr}{\mathcal{R}}
\newcommand{\Tr}{\mathrm{Tr}}
\newcommand{\T}{\mathrm{T}}
\newcommand{\st}{\mathrm{s.t. }}
\newcommand{\ie}{\mathrm{i.e. }}
\newcommand{\St}{\mathrm{St}}
 \newcommand{\grad}{\mathrm{grad}}
 \newcommand{\Hess}{\mathrm{Hess}}
\begin{document}

\title{Decentralized Riemannian Gradient Descent on the Stiefel Manifold}
\date{}



\author{Shixiang Chen \thanks{$^{1}$The Wm Michael Barnes '64 Department of Industrial and Systems Engineering, Texas A\&M University, College Station, TX 77843. 
		Email addresses: {\tt\small sxchen@tamu.edu} (S. Chen), {\tt\small alfredo.garcia@tamu.edu } (A. Garcia), {\tt\small shahin@tamu.edu} (S.  Shahrampour).}   \and Alfredo Garcia\footnotemark[1] \and Mingyi Hong	\thanks{$^{2}$The Department of Electrical and Computer Engineering, University of Minnesota, Minneapolis, MN 55455.
		Email address: {\tt\small mhong@umn.edu} (M. Hong).}  \and Shahin Shahrampour\footnotemark[1] 

}


\maketitle
\begin{abstract}
We consider a distributed non-convex optimization where a network of agents aims at minimizing a global function over the Stiefel manifold. The global function is represented as a finite sum of smooth local functions, where each local function is associated with one agent and agents communicate with each other over an undirected connected graph. The problem is non-convex as local functions are possibly non-convex (but smooth) and the Steifel manifold is a non-convex set. We present a decentralized Riemannian stochastic gradient method (DRSGD) with the convergence rate of $\mathcal{O}(1/\sqrt{K})$ to a stationary point. To have exact convergence with constant stepsize, we also propose a decentralized Riemannian gradient tracking algorithm (DRGTA) with the convergence rate of $\mathcal{O}(1/K)$ to a stationary point. We use multi-step consensus to preserve the iteration in the local (consensus) region. DRGTA is the first decentralized algorithm with exact convergence for distributed   optimization on Stiefel manifold.
\end{abstract}

	\section{Introduction}
Distributed optimization  has received significant attention in the past few years in   machine learning, control and signal processing. There are mainly two scenarios where distributed algorithms are  necessary: (i) the data is geographically distributed over networks and/or (ii) the computation on a single (centralized) server is too expensive (large-scale data setting). 
In this paper, we  consider  the following  multi-agent optimization problem 
\be   \label{opt_problem}
\bad
\min \frac{1}{n}\sum_{i=1}^n &f_i(x_i)\\
\st \quad  x_1= x_2 =&\ldots = x_n,\\
x_i\in\M,&\quad \forall i = 1,\ldots,n,
\ead
\ee
where $f_i$ has $L-$Lipschitz continuous gradient in Euclidean space and $\M:=\St(d,r)= \{ x\in\R^{d\times r}: x^\top x = I_r \}$ is the Stiefel manifold. Unlike the Euclidean distributed setting, problem \eqref{opt_problem}  is defined on the Stiefel manifold, which is a non-convex set. Many important applications can be written in the form \eqref{opt_problem}, e.g., decentralized spectral analysis  \cite{kempe2008decentralized,gang2021linearly,huang2020communication}, dictionary learning \cite{raja2015cloud},  eigenvalue estimation of the covariance matrix \cite{penna2014decentralized} in wireless sensor networks, and deep neural networks with orthogonal constraint \cite{arjovsky2016unitary,vorontsov2017orthogonality,huang2018orthogonal}.  

Problem \eqref{opt_problem} can generally represent a risk minimization. One approach to solving \eqref{opt_problem} is collecting all variables to a central server and running a centralized algorithm. However, when the dataset is massive (or the data dimension is large), this causes memory issues and computational burden on the central server. Then, it is more efficient to take a decentralized approach and use local computation based on a network topology. In this case, each local function $f_i$ is associated with one agent in the network, and agents communicate with each other over an undirected connected graph. For example, for stochastic gradient descent (SGD), \cite{lian2017can} show that the decentralized SGD can be faster than centralized SGD, especially when training neural networks. More importantly, a central server may not exist in practice.

\subsection{Our Contributions}
In this paper, we focus on the decentralized setting and design efficient decentralized algorithms to solve \eqref{opt_problem} over any connected undirected network. Our contributions are as follows:\\
	(1)  We show the convergence of the decentralized stochastic Riemannian gradient method (\cref{alg:DRPG}) for solving \eqref{opt_problem}.  Specifically, the iteration complexity of obtaining an $\epsilon-$stationary point (see \cref{def:stationary}) is $\mathcal{O}(1/\epsilon^2)$ in expectation \footnote{\label{note1} We have omitted the dependence on network parameters here.}. \\
	(2)  To achieve exact convergence with constant stepsize, we propose a  gradient tracking algorithm (DRGTA) (\cref{alg:DRG_GT}) for solving \eqref{opt_problem}. For DRGTA, the iteration complexity of obtaining an $\epsilon-$stationary point is $\mathcal{O}(1/\epsilon)$ \footnoteref{note1}.
	
	Importantly, both of the proposed algorithms are retraction-based and DRGTA is vector transport-free. These two features make the algorithms computationally cheap and conceptually simple. DRGTA is the first decentralized algorithm with exact convergence for distributed optimization on the Stiefel manifold.

\subsection{Related works}
Decentralized optimization   has been well-studied in Euclidean space. The decentralized (sub)-gradient methods were studied in \cite{tsitsiklis1986distributed,nedic2010constrained,yuan2016convergence,chen2020distributed} and a distributed dual averaging subgradient method was proposed in \cite{duchi2011dual}. However, with a constant stepsize $\beta>0,$ these methods can only converge to a $\mathcal{O}(\frac{\beta}{1-\sigma_2})-$neighborhood of a stationary point, where $\sigma_2$ is a network parameter (see \cref{assump:doubly-stochastic}). To achieve exact convergence with a fixed stepsize, gradient tracking algorithms were proposed in  \cite{shi2015extra,xu2015augmented,di2016next,qu2017harnessing,nedic2017achieving,yuan2018exact},  to name a few. The convergence analysis can be unified via a primal-dual framework \cite{alghunaim2019decentralized}.  Another way to use the constant stepsize is decentralized ADMM and its variants \cite{mota2013d,chang2014multi,shi2014linear,aybat2017distributed}. 
Also, decentralized stochastic gradient method for non-convex smooth problems were well-studied in \cite{lian2017can,assran2019stochastic,xin2020near}, etc. We refer to the survey paper \cite{nedic2018network} for a complete review on the state-of-the-art algorithms and the role of network topology.

The problem \eqref{opt_problem} can be thought as a constrained decentralized problem in Euclidean space, but since the Stiefel manifold constraint is non-convex, none of the above works can solve the problem. On the other hand, we can also treat \eqref{opt_problem} as a smooth problem over the Stiefel manifold. However, the  constraint $x_1=x_2=\ldots=x_n$ is difficult to handle due to the lack of linearity on $\M$.  Since the Stiefel manifold is an embedded submanifold in Euclidean space, our viewpoint is  to treat the problem in Euclidean space and develop new tools based on Riemannian manifold optimization \cite{edelman1998geometry,Absil2009,boumal2019global}. For the optimization problem \eqref{opt_problem}, a decentralized Riemannian gradient tracking algorithm was presented in \cite{shah2017distributed}. The vector transport operation should be used in \cite{shah2017distributed}, which brings not only expensive computation but also analysis difficulties. Moreover, they need to use asymptotically infinite number of consensus steps.   Other distributed algorithms were either specifically designed for the PCA problem \cite{penna2014decentralized,raja2015cloud,gang2021linearly} or  in centralized topology \cite{fan2019distributed,huang2020communication,wang2020distributed}.  For these decentralized algorithms, diminishing stepsize or asymptotically infinite number of communication steps should be utilized to get exact solution. Different from all these works, DRGTA requires a {\it finite} number of communications using a {\it constant} step-size.

As a special case of problem \eqref{opt_problem}, the Riemannian consensus problem is well-studied; see \cite{sarlette2009consensus,tron2012riemannian,markdahl2020high,chen2020consensus}.  Recently, it was shown in \cite{chen2020consensus} that the multi-step consensus algorithm (DRCS) converges linearly to the global consensus in a local region. 
\begin{definition}[Consensus]\label{def:consensus}
	Consensus is the configuration where $x_i = x_j\in\M$ for all $i,j\in[n]$. We define the consensus set as follows
	\be\label{def:consensus set}
	\cX^*:=\{ \X\in \M^n: x_1=x_2=\ldots=x_n \}.
	\ee
\end{definition}
Specifically, DRCS iterates $\{\X_k\}$ have the following convergence property in a neighborhood of $\cX^*$
\be\label{ineq:consensus_linear_rate}
\dist(\X_{k+1}, \cX^*) \leq \vartheta\cdot\dist(\X_{k}, \cX^*), \quad \vartheta \in(0,1), 
\ee
where $\dist^2(\X,\cX^*): =  \min_{y\in\M} \frac{1}{n} \sum_{i=1}^n\normfro{y-x_{i}}^2$ and $\X^\top = ( x_1^\top \ x_2^\top \ \ldots \ x_n^\top )$.
 The linear rate of DRCS sheds some lights on designing the decentralized Riemannain gradient method on Stiefel manifold. 
More details will be provided in \cref{sec:consensus on stiefel}.

\section{Preliminaries}\label{sec:preliminary}
{\bf Notation:} The undirected connected graph $G=(\mathcal{V},\mathcal{E})$ is composed of $|\mathcal{V}|=n$ agents. We use $\X$ to denote a collection of all local variables $x_i$ by stacking them, i.e., $\X^\top = ( x_1^\top \ x_2^\top \ \ldots \ x_n^\top )$. For $\M$, the $n-$fold Cartesian product  of $\M$ with itself  is denoted as $\M^n = \M\times\ldots\times\M$. We use $[n]: = \{1,2,\ldots,n\}$. For $\X\in\M^n$, we denote the $i-$th block by $[\X]_i=x_i$. 
We denote the tangent space of $\M$ at point $x$ as $\T_x\M$ and the normal space as $N_x\M$. The inner product on $\T_x\M$ is induced from the Euclidean inner product $\inp{x}{y}=\Tr(x^\top y)$. Denote $\normfro{\cdot}$ as the Frobenius norm and $\normtwo{\cdot}$ as the operator norm. The Euclidean gradient of function $g(x)$ is $\nabla g(x)$ and the Riemannain gradient is $\grad g(x).$ Let $I_r$ and $0_r$ be the $r\times r$ identity matrix and zero matrix, respectively. And let $\textbf{1}_n$ denote the $n$ dimensional vector with all ones.

The network structure is modeled using a matrix,   denoted by $W$, which satisfies the following assumption.
\begin{assumption}\label{assump:doubly-stochastic}
	We assume that the undirected graph  $G$ is    connected and $W$ is doubly stochastic, i.e.,
	(i) $W=W^\top$; (ii)  $W_{ij}\geq 0$ and $1>W_{ii}>0$ for all $i,j;$ (iii)  Eigenvalues of $W$ lie in $(-1,1]$. The second largest singular value $\sigma_2$ of $W$ lies in $\sigma_2\in[0,1)$.
\end{assumption}

We now introduce some preliminaries of Riemannian manifold and fundamental lemmas. 

\subsection{Induced Arithmetic Mean}
Denote the Euclidean average point of $x_1,\ldots, x_n$ by 
\be\label{def:Euclidean mean}
\hat x := \frac{1}{n}\sum_{i=1}^n x_{i}.
\ee 
To measure the degree of consensus, the error $\normfro{x_i-\hat x}$ is typically used in the Euclidean decentralized algorithms. Instead, here we use the induced arithmetic mean(IAM) \cite{sarlette2009consensus} on $\St(d,r)$, defined as follows 
  
\begin{align} 
\bar{x}& := \argmin_{y\in\St(d,r)} \sum_{i=1}^n\normfro{y-x_{i}}^2 = \ps(\hat x),\label{eq:IAM}\tag{IAM}
\end{align}
where $\p_{\St}(\cdot)$ is the orthogonal projection onto $\St(d,r)$. 
Define
\be\label{def:IAM}
\bar\X = \mathbf{1}_n\otimes \bar x.
\ee
 Then the distance between $\X$ and $\cX^*$ is given by
\[ \dist^2(\X, \cX^*)= \min_{y\in\St(d,r)} \frac{1}{n} \sum_{i=1}^n\normfro{y-x_{i}}^2 = \frac{1}{n}\normfro{\X-\bar\X}^2.  \]
Furthermore, we define the $l_{F,\infty}$ distance  between $\X$ and $\bar \X$ as  
\be\label{distance_infty}
\normfroinf{\X-\bar\X} = \max_{i\in[n]} \normfro{x_i-\bar{x}}. \tag{$l_{F,\infty}$}
\ee
We will develop the analysis of decentralized Riemannian gradient descent by studying the error distance $\normfro{\X-\bar\X}$ and $\normfroinf{\X-\bar\X}$.
\subsection{Optimality Condition }
Next, we introduce the optimality condition on manifold $\cM.$ 
Consider the following centralized optimization problem over a matrix manifold $\M$ \be\label{prob:central} \min h(x) \quad \st \quad  x\in\M. \ee
Since we use the metric on tangent space $\T_x\M$ induced from the Euclidean inner product $\inp{\cdot}{\cdot}$, the Riemannian gradient $\grad h(x)$ on $\St(d,r)$ is given by $\grad h(x) = \p_{\T_{x\M}}\nabla h(x)$, where $\p_{\T_{x\M}}$ is the orthogonal projection onto $\T_x\M$. More specifically, we have \[\p_{\T_{x\M}}y = y - \frac{1}{2} x(x^\top y + y^\top x)\] for any $y\in\R^{d\times r}$; see \cite{edelman1998geometry,Absil2009}. 
The  necessary first-order optimality condition of problem \eqref{prob:central} is given as follows.  
\begin{proposition}\cite{Yang-manifold-optimality-2014,boumal2019global}\label{prop:optcond}
	Let $x \in \M $ be a local optimum for \eqref{prob:central}. If $h$
	is differentiable at $x$, then $\grad h(x) = 0$.
\end{proposition}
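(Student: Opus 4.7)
The plan is to reduce the manifold optimality condition to the familiar Euclidean one by pulling the function back to an interval via a curve on $\M$ that is tangent to an arbitrary direction $v \in \T_x\M$. The core observation is that if $x$ is a local minimum of $h$ on $\M$, then for \emph{any} smooth curve $\gamma:(-\epsilon,\epsilon)\to\M$ with $\gamma(0)=x$, the composed function $\varphi(t):=h(\gamma(t))$ has an unconstrained local minimum at $t=0$, so $\varphi'(0)=0$ by the standard first-order condition in $\R$.

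First I would fix an arbitrary tangent vector $v\in\T_x\M$ and construct such a curve. The cleanest way is to use a retraction $\Retr$ on $\St(d,r)$, for example the polar or QR retraction, and define $\gamma(t) := \Retr_x(tv)$ for $t$ in a small neighborhood of $0$. By the defining properties of a retraction, $\gamma(0)=x$ and $\gamma'(0)=v$. Applying the Euclidean chain rule to $\varphi(t)=h(\gamma(t))$ and using that $h$ is differentiable at $x$, I get
\[
0 = \varphi'(0) = \inp{\nabla h(x)}{\gamma'(0)} = \inp{\nabla h(x)}{v}.
\]
Since $v\in\T_x\M$ was arbitrary, this shows $\nabla h(x)$ is orthogonal to the entire tangent space $\T_x\M$, i.e.\ $\nabla h(x)\in N_x\M$.

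Finally I would invoke the definition of the Riemannian gradient stated just before the proposition, namely $\grad h(x) = \p_{\T_x\M}\nabla h(x)$. Since $\nabla h(x)\in N_x\M$ and the normal space is the orthogonal complement of $\T_x\M$ under the induced Euclidean inner product, the projection $\p_{\T_x\M}\nabla h(x)$ vanishes, yielding $\grad h(x)=0$.

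There is not really a hard step here; the only mild subtlety is ensuring that a curve on $\M$ realizing any prescribed tangent direction exists, which is exactly what the existence of a retraction (or, alternatively, the submanifold structure of $\St(d,r)\subset\R^{d\times r}$) guarantees. If one prefers to avoid invoking a retraction, the same argument can be run with a curve obtained locally from the embedded submanifold structure of the Stiefel manifold, parametrizing a neighborhood of $x$ in $\M$ by a chart and building $\gamma$ inside that chart with initial velocity $v$.
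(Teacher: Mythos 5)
Your argument is correct and is the standard first-order necessary condition proof for embedded submanifolds (pull back along a curve realizing an arbitrary tangent direction, apply the one-dimensional first-order condition, conclude $\nabla h(x)\perp\T_x\M$ and hence $\p_{\T_x\M}\nabla h(x)=0$). The paper does not prove this proposition itself but cites it from the references, where essentially this same argument appears, so there is nothing further to compare.
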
 
Therefore, $x$ is a first-order critical point (or critical point) if $\grad h(x)=0$. 
Let $\bar x$ be the IAM of $\X$. We  define the $\epsilon-$ stationary point of problem \eqref{opt_problem} as follows. 
\begin{definition}[$\epsilon$-Stationarity]\label{def:stationary}
	We say that $\X^\top=(x_1^\top \ x_2^\top \ \ldots\ x_n^\top)$ is an $\epsilon-$ stationary point of problem \eqref{opt_problem} if the following holds:
	\[ \frac{1}{n}\sum_{i=1}^{n}\normfro{x_i-\bar x}^2 \leq \epsilon \quad \forall i,j\in[n] \]
	and 
	\[ \normfro{\grad f(\bar x)}^2 \leq \epsilon, \] where we use the notation $f(\bar x) = \frac{1}{n} \sum_{i=1}^n f_i(\bar x)$. 
	
\end{definition}

\subsection{Basic Lemmas}\label{subsec:basic lemmas}
Our goal is to  develop the decentralized version of centralized Riemannian gradient descent on $\St(d,r).$ 
Simply speaking, the centralized Riemannian gradient descent \cite{Absil2009,boumal2019global}  iterates as \[ x_{k+1} = \Retr_{x_k}(-\alpha\grad h(x_k)),\] i.e.,  updating along a negative Riemannian gradient direction on the tangent space, then performing a operation called \textit{ retraction} $ \Retr_{x_k}$ to ensure feasibility.  We use the definition  of retraction in \cite[Definition 1]{boumal2019global}. The retraction  is the relaxation of exponential mapping, and more importantly, it is computationally cheaper. 
We also assume the  second-order boundedness of  retraction.  It means that \[ \Retr_{x}(\xi) = x+\xi + \mathcal{O}(\normfro{\xi}^2).\] That is, $\Retr_{x}(\xi)$ is locally  good approximation to $x+\xi$. Such   kind of approximation is well enough to take the place of exponential map for the first-order algorithms.  
\begin{lemma} \cite{boumal2019global,Liu-So-Wu-2018}\label{lem:nonexpansive_bound_retraction}
	Let $\Retr$ be a second-order retraction over $\St(d,r)$, we have
	\be\label{ineq:ret_second-order}\bad
	 \normfro{ \Retr_x(\xi)& - (x+\xi)} \leq M\normfro{\xi}^2, \\ & \forall x\in\St(d,r), 
	 \forall \xi\in\T_x\M. \ead\tag{P1} \ee
	Moreover, if the retraction is the polar decomposition. 
	For all $\X\in\St(d,r)$ and
	$\xi\in\T_x\M$, the following  inequality  holds for any $y\in \St(d,r)$ \cite[Lemma 1]{li2019nonsmooth}:
	\be\label{ineq:ret_nonexpansive}
	\normfro{\Retr_{x}(\xi)-y}\leq \normfro{x+\xi-y} .
	\ee
\end{lemma}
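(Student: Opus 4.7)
The plan is to handle the two claims by rather different methods.

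For inequality \eqref{ineq:ret_second-order}, I would exploit the defining property of a second-order retraction: along the curve $c(t)=\Retr_x(t\xi)$, the ambient-space second derivative $c''(0)$ lies in the normal space $N_x\M$. A second-order Taylor expansion in $t$ then yields $\Retr_x(\xi) = x + \xi + \tfrac12 c''(0) + R(x,\xi)$, where $\|c''(0)\|$ is quadratic in $\xi$ and $R$ is a smooth higher-order remainder. Smoothness of $\Retr$ together with compactness of $\St(d,r)$, restricting as usual to $\xi$ in a bounded region, yields a uniform constant $M$. For concreteness, for the polar-decomposition retraction one can verify this directly by expanding $(I+\xi^\top\xi)^{-1/2} = I - \tfrac12\xi^\top\xi + O(\normfro{\xi}^4)$, so that $\Retr_x(\xi) - (x+\xi) = -\tfrac12 x\,\xi^\top\xi + O(\normfro{\xi}^3)$, whose Frobenius norm is bounded by a constant times $\normfro{\xi}^2$.

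For inequality \eqref{ineq:ret_nonexpansive}, I would proceed by a direct algebraic computation using the explicit polar form $\Retr_x(\xi)=U$, where $x+\xi = UP$ and $P = ((x+\xi)^\top(x+\xi))^{1/2}$. The crucial structural fact is that, since $\xi^\top x + x^\top\xi = 0$ on the Stiefel tangent space, $P^2 = I + \xi^\top\xi \succeq I$, and hence $P - I \succeq 0$. Expanding the two squared Frobenius norms and using $\normfro{x}^2 = \normfro{y}^2 = r$, $\Tr(x^\top\xi)=0$, and $x+\xi = UP$, a routine calculation yields
\begin{equation*}
\normfro{x+\xi-y}^2 - \normfro{U-y}^2 = \Tr(P^2) - r + 2\Tr\bigl((I-P)U^\top y\bigr).
\end{equation*}
Set $Q := U^\top y$ and $S := \tfrac12(Q+Q^\top)$. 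Since $U,y\in\St(d,r)$ are partial isometries, $\normop{Q}\le 1$, so $S \preceq I$ and $I-S \succeq 0$. Using symmetry of $P-I$ to replace $Q$ by $S$ in the trace, the right-hand side rearranges as
\begin{equation*}
\Tr((P-I)(P+I)) - 2\Tr((P-I)S) = \normfro{P-I}^2 + 2\,\Tr\bigl((P-I)(I-S)\bigr),
\end{equation*}
which is nonnegative because the trace of a product of two symmetric positive semidefinite matrices is nonnegative.

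I expect the main obstacle to be spotting this algebraic rearrangement: one must symmetrize $Q$ to $S$ and then write $P+I-2S = (P-I) + 2(I-S)$ to factor the expression as a sum of two manifestly nonnegative PSD-type traces. Crucially, the argument hinges on two Stiefel-specific features, namely $P-I\succeq 0$ coming from skew-symmetry of $x^\top\xi$, and $\normop{U^\top y}\le 1$ coming from $U,y$ being partial isometries; neither can be weakened without breaking the proof.
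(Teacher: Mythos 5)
Your proof is correct. Note, however, that the paper does not actually prove this lemma: both inequalities are quoted from the cited references, and the only in-paper commentary is the appendix remark that $M=1$ for the polar retraction when $\normfro{\xi}\leq 1$. So there is no internal argument to compare against; what you have written is a self-contained reconstruction of the standard proofs. For \eqref{ineq:ret_second-order}, your Taylor-expansion argument (and the explicit expansion $(I_r+\xi^\top\xi)^{-1/2}=I_r-\tfrac12\xi^\top\xi+\mathcal{O}(\normfro{\xi}^4)$ for the polar case) is fine; in fact the second-order property is not even needed for this particular bound, since $\Retr_x(0)=x$ and $D\Retr_x(0)=\Id$ already give a quadratic remainder, uniformly by compactness of $\St(d,r)$ and boundedness of $\xi$ (which is also why the constant $M$ implicitly requires $\xi$ to stay in a bounded region, as you note). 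For \eqref{ineq:ret_nonexpansive}, your computation checks out: with $x+\xi=UP$ and $P=(I_r+\xi^\top\xi)^{1/2}\succeq I_r$ (using skew-symmetry of $x^\top\xi$), one indeed gets $\normfro{UP-y}^2-\normfro{U-y}^2=\Tr(P^2)-r+2\Tr((I_r-P)U^\top y)$; the symmetrization $Q\mapsto S$ is legitimate because $P-I_r$ is symmetric, and the rearrangement into $\normfro{P-I_r}^2+2\Tr\bigl((P-I_r)(I_r-S)\bigr)$ is nonnegative since $I_r-S\succeq 0$ follows from $\normop{U^\top y}\leq 1$ and the trace of a product of two positive semidefinite symmetric matrices is nonnegative. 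The two Stiefel-specific ingredients you single out are exactly where the geometry enters, and this is essentially the argument in the reference the paper cites for this inequality.
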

  {In the sequel, {\it retraction} refers to the {\it polar retraction} to present a simple analysis, unless otherwise noted. More details  on the polar retraction is provided in \cref{sec:append_retraction}. }  Throughout the paper, we assume that every $f_i(x)$   is Lipschitz smooth. 
\begin{assumption}\label{assump:lips in Rn}
 Each $f_i(x)$ has $L-$Lipschitz continuous gradient, and let $D:=\max_{x\in\St(d,r)}\normfro{\nabla f_i(x) }.$ Therefore, $\nabla f(x)$ is also $L$-Lipschitz continuous and  $D =\max_{x\in\St(d,r)}\normfro{\nabla f(x) }.$
\end{assumption}
 We have two similar Lipschitz continuous inequalities on Stiefel manifold as the Euclidean-type ones \cite{nesterov2013introductory}. We provide the proof in Appendix.

\begin{lemma}[Lipschitz-type inequalities] \label{lem:lipschitz}
	For any $x,y\in\St(n,d)$ and $\xi\in\T_{x}\M$, if $f(x)$ is $L-$Lipschitz smooth in Euclidean space, then  there exists a constant $L_g=  L + L_n$ such that
	\be\label{ineq:lipschitz}
	\left|f(y) - \left[ f(x) + \inp{\grad f(x)}{y-x} \right]  \right|\leq \frac{L_g}{2}\normfro{y-x}^2,
	\ee
	where $L_n= \max_{x\in\St(d,r)} \normtwo{  \nabla f(x)}$. 
	Moreover, define  $L_G=L+2L_n$, one has
	\be\label{ineq:lips_riemanniangrad} \normfro{\grad f(x)-\grad f(y)}\leq L_G\normfro{y-x}.  \ee
\end{lemma}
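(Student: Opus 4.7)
My plan is to derive both inequalities by exploiting the identity $\grad f(x) = \nabla f(x) - x\,\sym(x^\top \nabla f(x))$, where $\sym(M) := \frac{1}{2}(M + M^\top)$, so that the normal component of $\nabla f(x)$ has the explicit form $x\,\sym(x^\top\nabla f(x))$. The first inequality will reduce to the standard Euclidean descent lemma plus a correction term that I control using the Stiefel constraint $x^\top x = y^\top y = I_r$; the second will use the projection viewpoint $\grad f(x) = \p_{\T_x\M}\nabla f(x)$.

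For the first inequality, I would start from the Euclidean $L$-smoothness bound $|f(y) - f(x) - \inp{\nabla f(x)}{y-x}| \leq \tfrac{L}{2}\normfro{y-x}^2$ and subtract $\inp{x\Omega_x}{y-x}$ where $\Omega_x := \sym(x^\top \nabla f(x))$. The key computation is to rewrite this correction using symmetry of $\Omega_x$:
\[
\inp{x\Omega_x}{y-x} = \Tr(\Omega_x x^\top(y-x)) = \tfrac{1}{2}\Tr\bigl(\Omega_x (x^\top y + y^\top x - 2I_r)\bigr) = -\tfrac{1}{2}\Tr\bigl(\Omega_x (y-x)^\top(y-x)\bigr),
\]
where the last step uses the identity $x^\top y + y^\top x - 2I_r = -(y-x)^\top(y-x)$ that follows directly from $x,y \in \St(d,r)$. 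Bounding the trace by $\normtwo{\Omega_x}\cdot\normfro{y-x}^2 \leq L_n \normfro{y-x}^2$ (since $\normtwo{\sym(x^\top\nabla f(x))} \leq \normtwo{x^\top\nabla f(x)} \leq L_n$) yields the stated $L_g = L + L_n$ constant.

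For the Riemannian gradient Lipschitz bound, I would use the projection decomposition
\[
\grad f(x) - \grad f(y) = \p_{\T_x\M}\bigl(\nabla f(x) - \nabla f(y)\bigr) + \bigl(\p_{\T_x\M} - \p_{\T_y\M}\bigr)\nabla f(y).
\]
The first piece is bounded by $L\normfro{y-x}$ using non-expansiveness of the orthogonal projection and Euclidean smoothness. For the second piece, I would expand $(\p_{\T_x\M} - \p_{\T_y\M})\xi = y\sym(y^\top\xi) - x\sym(x^\top\xi)$ and then insert $\pm\, x\sym(y^\top\nabla f(y))$ to split it into $(y-x)\sym(y^\top\nabla f(y))$ and $x\sym((y-x)^\top\nabla f(y))$; each is bounded by $L_n\normfro{y-x}$ using $\normtwo{x} = 1$ and $\normfro{A}\geq\normfro{\sym A}$. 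Summing gives $2L_n\normfro{y-x}$ for the second piece, and therefore the overall constant $L_G = L + 2L_n$.

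The main subtlety, and the reason a naive split fails, is the choice of decomposition in the second inequality: directly writing $\grad f(x) - \grad f(y) = (\nabla f(x) - \nabla f(y)) - (x\Omega_x - y\Omega_y)$ and estimating the two pieces independently yields $2L + 2L_n$, which is worse than the claimed $L + 2L_n$. Routing the Euclidean Lipschitz bound through the tangent projection (so that the smoothness constant $L$ is absorbed only once, via non-expansiveness of $\p_{\T_x\M}$) is what produces the sharper constant. Everything else is bookkeeping with the symmetry of $\Omega_x$ and the identity $x^\top y + y^\top x = 2I_r - (y-x)^\top(y-x)$.
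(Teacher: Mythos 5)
Your proposal is correct and follows essentially the same route as the paper: the first inequality is obtained by isolating the normal component of $\nabla f(x)$ and using the Stiefel identity $x^\top y + y^\top x - 2I_r = -(y-x)^\top(y-x)$ (which is exactly the paper's projection formula \eqref{property of projection onto tangent}), and the second uses the same decomposition $\grad f(x)-\grad f(y)=\p_{\T_x\M}(\nabla f(x)-\nabla f(y))+(\p_{\T_x\M}-\p_{\T_y\M})\nabla f(y)$ with the same $\pm$ insertion, yielding the identical constants $L_g = L+L_n$ and $L_G = L+2L_n$. Your closing remark about why the naive split loses a factor of $L$ is a nice observation that the paper leaves implicit.
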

The difference between two Riemannian gradients is not well-defined on general manifold. However, since the Stiefel manifold is embedded in Euclidean space, we are free to do so.   Another similar inequality as \eqref{ineq:lipschitz} is the  restricted Lipschitz-type gradient presented in \cite[Lemma 4]{boumal2019global}.
But they do not provide an inequality as \eqref{ineq:lips_riemanniangrad}.  One could also consider the following Lipschitz  inequality (see \cite{zhang2016first,Absil2009})
\[ \normfro{\mathrm{P}_{x\rightarrow y}\grad f(x) - \grad f(y)}\leq L_g^\prime d_g(x,y), \]
where $\mathrm{P}_{x\rightarrow y}:\T_x\M \rightarrow \T_y\M$ is the vector transport and $d_g(x,y)$ is the geodesic distance. Since involving vector transport and geodesic distance   brings computational and conceptual difficulties, we choose to use the form of \eqref{ineq:lips_riemanniangrad}  for simplicity.  In fact, $L_g$, $\tilde{L}_g$ and $L_g^\prime$ are  the same up to a constant. A detailed comparison is provided in  \cref{subsection:append-comparison lipschitz ineq}.  

We will use \cref{lem:nonexpansive_bound_retraction} and \cref{lem:lipschitz} to present a parallel analysis to the decentralized Euclidean gradient methods\cite{nedic2010constrained,nedic2017achieving,lian2017can}.

\section{Review of consensus on Stiefel manifold}\label{sec:consensus on stiefel}
 For the decentralized gradient-type algorithms  \cite{tsitsiklis1986distributed, nedic2010constrained,yuan2016convergence,shi2015extra,nedic2017achieving,lian2017can}, they are based  on the linear convergence of consensus iteration  in Euclidean space.

 The consensus problem over $\St(d,r)$ is to minimize the quadratic loss function on Stiefel manifold
\be\label{opt:consensus_thispaper}
\bad
\min   &\varphi^t(\X) :=\frac{1}{4} \sum_{i=1}^n\sum_{j=1}^n W^t_{ij} \normfro{x_i-x_j}^2 \\
&\st \quad x_i\in\M,\  \forall i\in[n],
\ead
\ee
where  the superscript  $t\geq 1$ is an integer used to denote the $t$-th power of a doubly stochastic matrix $W$. Note that $t$ is introduced   to provide flexibility for   algorithm design and analysis, and computing $W^t_{ij}$ corresponds to performing $t$ steps of communication on the tangent space.  
 The Riemannian gradient   method DRCS proposed in \cite{chen2020consensus} is given by for any $i\in[n]$,
  \be\label{consensus_rga}\bad
  x_{i,k+1} 
   = \Retr_{x_{i,k}}( \alpha\p_{\T_{x_{i}}\M} (\sum_{j=1}^n W^t_{ij} x_{j,k}) )  .\ead \ee
 

 DRCS   {converges} almost surely to consensus when $r\leq \frac{2}{3}d-1$ with random initialization \cite{markdahl2020high}. However, to study the decentralized optimization algorithm to solve \eqref{opt_problem}, the local Q-linear convergence of DRCS is more important for   decentralized optimization. Due to the nonconvexity of $\M$, the Q-linear rate of DRCS holds in a local region defined as follows
\begin{align}
 \N: &= \N_{1}\cap \N_{2}, \label{contraction_region} \\
	\N_{1} :& = \{ \X: \normfro{\X-\bar\X}^2 \leq n\delta_{1}^2 \}, \label{contraction_region_1} \\
	 \N_{2} : &= \{ \X: \normfroinf{\X-\bar\X} \leq \delta_{2} \}, \label{contraction_region_2}
\end{align}
    where $\delta_{1},\delta_{2}$
satisfy
\be\label{delta_1_and_delta_2} \delta_{1} \leq \frac{1 }{ 5\sqrt{r}}\delta_{2} \quad \textit{and} \quad    \delta_{2} \leq  \frac{1}{6}.  \ee


The following convergence result of DRCS can be found in  \cite[Theorem 2]{chen2020consensus}. The formal statement is provided in \cref{thm:linear_rate_consensus} in Appendix. 

\begin{fact}(\textbf{Informal})\label{thm:linear_rate_consensus_informal}
   Under \cref{assump:doubly-stochastic}, for some $\bar\alpha\in(0,1]$, if $\alpha \leq \bar\alpha$
 and $t\geq \lceil\log_{\sigma_2}(\frac{1}{2\sqrt{n}})\rceil$, 
	the sequence $\{\X_k\}$ of \eqref{consensus_rga} achieves consensus linearly if the initialization satisfies 
	$\X_0\in \N$ defined by \eqref{delta_1_and_delta_2}. That is, there exists 	  $\rho_t \in (0,1)$ such that $\X_k\in\N$ for all $k\geq 0$ and 
	\begin{align}\label{linear rate of consensus-0}
	\bad
	\normfro{\X_{k+1} - \bar{\X}_{k+1}}
	 \leq\rho_t\normfro{\X_{k} - \bar{\X}_{k}}.
	\ead
	\end{align}

\end{fact}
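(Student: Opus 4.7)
The plan is to proceed by induction on $k$, simultaneously proving the invariance $\X_k\in\N$ and the Q-linear contraction $\normfro{\X_{k+1}-\bar\X_{k+1}}\le \rho_t\normfro{\X_k-\bar\X_k}$ with a rate $\rho_t\in(0,1)$ to be extracted. The base case $\X_0\in\N$ is the hypothesis. For the inductive step I would first rewrite the DRCS update as a projected Euclidean averaging followed by a retraction: setting $y_{i,k}:=\sum_j W^t_{ij}x_{j,k}$ and $\xi_{i,k}:=\alpha\p_{\T_{x_{i,k}}\M}(y_{i,k}-x_{i,k})$, the orthogonal decomposition of $y_{i,k}-x_{i,k}$ into its tangent and normal components gives
\[
x_{i,k}+\xi_{i,k} = (1-\alpha)x_{i,k} + \alpha y_{i,k} - \alpha\, \Proj_{N_{x_{i,k}}\M}(y_{i,k}-x_{i,k}).
\]
The non-expansiveness \eqref{ineq:ret_nonexpansive} of the polar retraction, combined with the variational characterization of the IAM, then reduces the problem to controlling the Euclidean proxy:
\[
\normfro{\X_{k+1}-\bar\X_{k+1}}^2 \le \sum_i\normfro{x_{i,k}+\xi_{i,k}-\bar x_k}^2.
\]

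Next I would split the right-hand side into the averaging term $(1-\alpha)(x_{i,k}-\bar x_k)+\alpha(y_{i,k}-\bar x_k)$ and the normal correction. For the averaging part, the standard spectral argument on the orthogonal complement of $\mathbf{1}$ shows that $(1-\alpha)I+\alpha W^t$ acts with operator norm at most $1-\alpha(1-\sigma_2^t)$, modulo a cross-term from the gap $\hat x_k - \bar x_k$ between the Euclidean mean and the IAM that is itself second-order in $\normfro{\X_k-\bar\X_k}$ (since $\hat x_k$ lies $O(\normfro{\X_k-\bar\X_k}^2/\sqrt{n})$ away from $\M$ near consensus). The normal correction admits the closed form
\[
\Proj_{N_{x_{i,k}}\M}(y_{i,k}-x_{i,k}) = -\tfrac12\, x_{i,k}\sum_j W^t_{ij}(x_{i,k}-x_{j,k})^\top(x_{i,k}-x_{j,k}),
\]
derived from the Stiefel identity $x_i^\top x_j+x_j^\top x_i = 2I_r-(x_i-x_j)^\top(x_i-x_j)$, and is pointwise quadratic in the pairwise differences, hence bounded by $C\,\normfroinf{\X_k-\bar\X_k}\,\normfro{\X_k-\bar\X_k}\le C\delta_2\normfro{\X_k-\bar\X_k}$ inside $\N$. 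Combining gives
\[
\normfro{\X_{k+1}-\bar\X_{k+1}} \le \bigl(1-\alpha(1-\sigma_2^t) + C\alpha\delta_2\bigr)\normfro{\X_k-\bar\X_k},
\]
and for $t\ge \lceil\log_{\sigma_2}(1/(2\sqrt n))\rceil$ the bound $\sigma_2^t\le 1/(2\sqrt n)$ forces $1-\sigma_2^t\ge 1/2$, so by \eqref{delta_1_and_delta_2} a suitable $\bar\alpha\in(0,1]$ makes the bracket strictly less than $1$, defining $\rho_t$.

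The remaining task is to verify $\X_{k+1}\in\N$, i.e.\ both $\normfro{\X_{k+1}-\bar\X_{k+1}}^2\le n\delta_1^2$ and $\normfroinf{\X_{k+1}-\bar\X_{k+1}}\le \delta_2$. The Frobenius invariance $\X_{k+1}\in\N_1$ is immediate from the contraction above. The hard part, and the main obstacle of the proof, is the $\ell_{F,\infty}$ invariance $\X_{k+1}\in\N_2$: a Frobenius contraction does not by itself deliver a row-wise contraction, and a naive conversion would cost a factor $\sqrt n$. To handle this I would redo the decomposition per agent and invoke the entry-wise bound
\[
\normfro{y_{i,k}-\hat x_k} \le \sigma_2^t\, \normfro{\X_k-\bar\X_k} \le \sqrt{n}\,\sigma_2^t\, \normfroinf{\X_k-\bar\X_k},
\]
obtained from $\|W^t-J\|_{op}\le \sigma_2^t$ together with Cauchy--Schwarz. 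The choice $t\ge \lceil\log_{\sigma_2}(1/(2\sqrt n))\rceil$ is exactly what absorbs this $\sqrt n$ factor and produces a dimension-free per-agent contraction, while the coupling $\delta_1\le \delta_2/(5\sqrt r)$ in \eqref{delta_1_and_delta_2} is what keeps the normal-correction and IAM-drift terms (both controlled by $\normfro{\X_k-\bar\X_k}\le\sqrt n\delta_1$) small enough in $\ell_{F,\infty}$ to close the inductive step.
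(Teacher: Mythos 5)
First, note that the paper does not actually prove this Fact: it is imported verbatim from \cite[Theorem 2]{chen2020consensus}, with only the formal statement (\cref{thm:linear_rate_consensus}) restated in \cref{sec:append_consensus}. What the paper does contain is the same machinery redeployed for the perturbed iteration in \cref{lem:convergence_of_deviation from mean,lem:recursive lemma,lem:bound_of_k_k+1}, so a meaningful comparison is still possible. Your skeleton coincides with that machinery: induction on $k$ coupling the $\N$-invariance with the contraction; IAM minimality plus polar non-expansiveness \eqref{ineq:ret_nonexpansive} to pass to the Euclidean proxy $\normfro{\X_k+\xi_k-\bar\X_k}$; the tangent/normal split of \eqref{rewrite}; second-order control of $\hat x_k-\bar x_k$ via \cref{lem:distance between Euclideanmean and IAM}; and the role of $t\geq\lceil\log_{\sigma_2}(\tfrac{1}{2\sqrt n})\rceil$ in absorbing the $\sqrt n$ when closing the $\ell_{F,\infty}$ invariance (your per-agent bound is exactly \cref{lem:total deviation from 1/N} followed by \eqref{ineq:key0}--\eqref{ineq:key2}). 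Where you genuinely diverge is the contraction mechanism itself: you bound the linear averaging map $((1-\alpha)I+\alpha W^t)\otimes I$ in operator norm and add the normal correction by triangle inequality, obtaining a rate of the form $1-\alpha(1-\sigma_2^t)+C\alpha\delta_2$, whereas the cited proof expands $\normfro{\X_k-\alpha\grad\varphi^t(\X_k)-\bar\X_k}^2$ and uses a restricted-secant inequality $\inp{\grad\varphi^t(\X)}{\X-\bar\X}\geq\gamma_t\normfro{\X-\bar\X}^2$, which is why its corrections enter $\gamma_t=(1-4r\delta_1^2)(1-\delta_2^2/2)\mu_t$ \emph{quadratically} in $\delta_1,\delta_2$ and yield the rate $\sqrt{1-2(1-\nu)\alpha\gamma_t}$. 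Your route is more elementary but quantitatively coarser: the normal correction costs $2\alpha\delta_2$ rather than $O(\alpha\delta_2^2)$, and the argument survives only because $2\delta_2\leq 1/3 < 1/2\leq 1-\sigma_2^t$ under \eqref{delta_1_and_delta_2}; with a slightly larger $\delta_2$ it would break while the inner-product route would not.

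Two points deserve explicit care in your write-up. First, the IAM-drift component $\hat\X_k-\bar\X_k$ lies in the consensus direction, on which $(1-\alpha)I+\alpha W^t$ has eigenvalue $1$, so it is \emph{not} damped by the averaging; written as an additive term it carries no $\alpha$ prefactor, and your displayed bound silently folds it into $C\alpha\delta_2$. The fix is to account for it in squared norm (using $\normfro{\X-\bar\X}^2=\normfro{\X-\hat\X}^2+n\normfro{\hat x-\bar x}^2$ and $n\normfro{\hat x-\bar x}^2\leq 4r\delta_1^2\normfro{\X-\bar\X}^2$), which restores an $O(r\delta_1^2\alpha)$ contribution; this should be stated, since otherwise the claim that "a suitable $\bar\alpha$" fixes the bracket is circular for small $\alpha$. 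Second, for the $\N_2$ step you compare $x_{i,k+1}$ to $\bar x_k$, but membership in $\N_2$ is measured against $\bar x_{k+1}$, so you also need the bound on $\normfro{\bar x_k-\bar x_{k+1}}$ (the analogue of \cref{lem:bound_of_k_k+1}, which is $O(\alpha\delta_1^2)$ here); this term is missing from your sketch.
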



\section{Decentralized Riemannian gradient descent}\label{sec:distributed gradient method}


  The results of consensus problem on Stiefel manifold lead us to combine the ideas of decentralized gradient method in Euclidean space with the Stiefel manifold optimization. 
In this section, we study a distributed Riemannian stochastic gradient method for solving problem \eqref{opt_problem}, which is described in Algorithm \ref{alg:DRPG}. The algorithm is an   extension of the decentralized subgradient descent  \cite{nedic2010constrained}.

Since we need to  achieve consensus, the initial point $\X_0$ should be in the consensus region $\N$. One can simply initialize all agents from the same point. 
The step 5 in \cref{alg:DRPG} is first to perform a consensus step  and then to update local variable using Riemannian stochastic gradient direction $v_{i,k}$. The consensus step and computation of Riemannian gradient can be done in parallel\footnote{One could also exchange the order of gradient step and communication step, i.e., $x_{i,k+\half}=\Retr_{x_{i,k}}(-\beta_k v_{i,k}),$ $x_{i,k+1}=\Retr_{x_{i,k+\half}}( \alpha\p_{\T_{x_{i,k+\half}}\M} (\sum_{j=1}^n W^t_{ij} x_{j,k+\half}))$. Our analysis can also apply to this kind of updates if $\X_0\in \rho_t\N$, where $\rho_t\N$ denotes  region $\N$ with shrunk radius $\rho_t\delta_1, \rho_t\delta_2$. For  the Euclidean counterparts, when the graph is complete associated with equal weight matrix, the above updates are the same as centralized gradient step. However,  they are different on Stiefel manifold. }. The consensus stepsize $\alpha$ satisfies $\alpha\leq \bar\alpha$, which is the same as the consensus algorithm. The constant $\bar\alpha$ is given in \cref{thm:linear_rate_consensus} in Appendix. Moreover,   $\alpha=1$ works in practice for any $W$ satisfying \cref{assump:doubly-stochastic}.   
If $x_1=\ldots=x_n = z$, we denote
\[ f(z) := \frac{1}{n}\sum_{i=1}^{n} f_i (z).\]
  
Moreover, we need the following   assumptions on the stochastic Riemannian gradient $v_{i,k}$ and the stepsize $\beta_k$.

\begin{algorithm}[h]
	\caption{Decentralized Riemannian Stochastic Gradient Descent (DRSGD) for Solving \eqref{opt_problem}}\label{alg:DRPG}
	\begin{algorithmic}[1]
	 \State {\bfseries Input:}  initial point $\X_0 \in \N$, an integer $t\geq  \log_{\sigma_2}(\frac{1}{2\sqrt{n}}) $, $0< \alpha\leq \bar\alpha$, where $\bar\alpha$ is given in \cref{thm:linear_rate_consensus_informal}. 
		\For{$k=0,\ldots$\Comment{For each node $i\in[n]$, in parallel}}   
		\State Choose diminishing stepsize $\beta_k=\mathcal{O}(1/\sqrt{k})$ 
		\State Compute stochastic Riemannian gradient $v_{i,k}$ satisfying $\E v_{i,k} = \grad f_i(x_{i,k})$
		\State Update  
		$x_{i,k+1} = \Retr_{x_{i,k}}( \alpha\p_{\T_{x_{i,k}}\M} (\sum_{j=1}^n W^t_{ij} x_{j,k}) - \beta_{k} v_{i,k} )$
		\EndFor
	\end{algorithmic}
\end{algorithm}


\begin{assumption}\label{assum:stochastic-grad}
	\begin{enumerate}
		\item  The stochastic gradient $v_{i,k}$ is unbiased, i.e., $\E v_{i,k} = \grad f_i(x_{i,k})$ for all $i\in[n]$ and $v_{i,k}$ is independent of $v_{j,k}$ for any $i\neq j$. Moreover, the variance is bounded: $\E \normfro{v_{i,k} - \grad f_i(x_{i,k})}^2\leq \Xi^2$.
		\item We assume the uniform upper bound of $\normfro{v_{i}}$ is $D$, i.e.,  $\max_{x\in\St(d,r)}\normfro{v_i} \leq D$ for each $i\in[n]$.
	\end{enumerate}
\end{assumption}

The Lipschitz smoothness of $f_i(x)$ in \cref{assump:lips in Rn} and unbiased estimation   are quite standard in the literature. And \cref{lem:lipschitz} suggests that $\grad f_i$ is $L_G$-Lipschitz continuous. For the boundedness of $\normfro{v_i}$, it is a weak assumption since the Stiefel manifold is compact.
One common example is  the finite-sum form: $f_i=\frac{1}{m_i}\sum_{j=1}^{m_i} f_{ij},$ where $f_{ij}$ is smooth. Then the stochastic gradient $v_{i,k}$ is uniformly sampled from $\grad f_{ij}(x_{i,k}), j\in[m_i]$. We emphasize that the uniform boundedness of gradient is not needed for problems in Euclidean space, but Lipschitz continuity is necessary \cite{hong2020divergence}. The step 5 
can be seen as applying Riemannian  gradient method to  solve the following problem 
$$ \min_{\X\in\M^n} \beta_k f(\X) + \alpha \varphi^t(\X). $$
Similar as the analysis of DGD in Euclidean space, we need to ensure that $\normfro{\X_k-\bar{\X}_k}\rightarrow 0$. Hence, the effect of $f$ should be diminishing.   
The following assumption on the stepsize   is also needed to get an $\epsilon-$ solution. 

\begin{assumption}[Diminishing stepsize]\label{assump:stepsize_beta}
	The stepsize $\beta_k>0$   is non-increasing and 
	\[ \sum_{k=0}^\infty \beta_k = \infty ,\quad  \lim_{k\rightarrow\infty} \beta_k = 0, \quad \lim_{k\rightarrow \infty}\frac{\beta_{k+1}}{\beta_k}=1.\]
\end{assumption}

The assumption $\lim_{k\rightarrow \infty}\frac{\beta_{k+1}}{\beta_k}=1$ is additionally required to show the bound $\frac{1}{n}\normfro{\X_k-\bar\X_k}^2=\mathcal{O}(\frac{\beta_k^2D^2}{(1-\rho_t)^2})$, see   \cref{coro:rate_of_consensus} in  Appendix.

To proceed, we first need to guarantee that
$ \X_k \in\N,$
where $\N$ is the  consensus contraction region defined in \eqref{contraction_region}.  {Therefore, uniform bound $D$ and the multi-step consensus requirement $t\geq \lceil\log_{\sigma_2}(\frac{1}{2\sqrt{n}})\rceil$ are necessary in our convergence analysis. 
With appropriate stepsizes $\alpha$ and $\beta_k$, we get the following lemma using the consensus results in \cref{thm:linear_rate_consensus_informal}. We provide the proof in Appendix.

\begin{lemma}\label{lem:convergence_of_deviation from mean}
	Under \cref{assump:lips in Rn,assump:doubly-stochastic,assum:stochastic-grad,assump:stepsize_beta},
 let the stepsize  $\alpha$ satisfy $0<\alpha\leq\bar\alpha$,   $\beta_k$ satisfy $0\leq\beta_k\leq \min\{ \frac {1-\rho_t}{ D} \delta_1, \frac { \alpha\delta_1  }{5D}\}, \forall k\geq 0$, and $t\geq \lceil\log_{\sigma_2}(\frac{1}{2\sqrt{n}})\rceil$.   If $\X_0\in\N$, 	  it follows that $\X_k\in\N$ for all $k\geq 0$ generated by \cref{alg:DRPG}  and 
	\begin{align*} 
  \normfro{\X_{k+1} - \bar{\X}_{k+1}} &\le  \rho_t^{k+1} \normfro{\X_0 - \bar\X_0}  +    \sqrt{n} D \sum_{l=0}^k \rho_t^{k-l}\beta_l.
	\end{align*}
\end{lemma}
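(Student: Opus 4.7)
The natural strategy is induction on $k$, simultaneously establishing the invariant $\X_k\in\N$ and the one-step recursion
\[ \normfro{\X_{k+1}-\bar\X_{k+1}} \;\le\; \rho_t\,\normfro{\X_k-\bar\X_k} + \sqrt{n}\,\beta_k D, \]
from which the claimed bound follows by unrolling. The base case $k=0$ is immediate from the hypothesis $\X_0\in\N$.

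For the inductive step, the plan is to decouple the DRSGD update into a pure consensus move followed by a gradient perturbation. Introduce the auxiliary iterate
\[ z_{i,k+1} \;:=\; \Retr_{x_{i,k}}\!\bigl(\alpha\,\p_{\T_{x_{i,k}}\M}(\textstyle\sum_j W^t_{ij}x_{j,k})\bigr), \]
which is exactly one step of DRCS \eqref{consensus_rga} launched from $\X_k$. Since $\X_k\in\N$ by the inductive hypothesis, \cref{thm:linear_rate_consensus_informal} provides both $\Z_{k+1}\in\N$ and $\normfro{\Z_{k+1}-\bar\Z_{k+1}}\le\rho_t\normfro{\X_k-\bar\X_k}$. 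The true update $x_{i,k+1}$ and the auxiliary $z_{i,k+1}$ are polar retractions from a common base point whose tangent arguments differ by $-\beta_k v_{i,k}$; applying the nonexpansive inequality \eqref{ineq:ret_nonexpansive} with $y=z_{i,k+1}\in\M$, then triangle inequality and $\normfro{v_{i,k}}\le D$, controls $\normfro{x_{i,k+1}-z_{i,k+1}}$ by $\beta_k D$ (absorbing a quadratic term from \eqref{ineq:ret_second-order} whose magnitude is kept small by $\X_k\in\N$). Summing over $i$ gives $\normfro{\X_{k+1}-\Z_{k+1}}\le\sqrt{n}\,\beta_k D$. Because $\bar\X_{k+1}$ is by construction the IAM minimizer while $\bar\Z_{k+1}\in\cX^*$ is merely a feasible competitor,
\[ \normfro{\X_{k+1}-\bar\X_{k+1}} \;\le\; \normfro{\X_{k+1}-\bar\Z_{k+1}} \;\le\; \normfro{\X_{k+1}-\Z_{k+1}} + \normfro{\Z_{k+1}-\bar\Z_{k+1}}, \]
which delivers the one-step recursion, and iterating from $k=0$ yields the displayed bound.

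I expect the main obstacle to be verifying that $\X_{k+1}\in\N=\N_1\cap\N_2$ at every iteration, since without this invariant \cref{thm:linear_rate_consensus_informal} cannot be reapplied. Containment in $\N_1$ is direct from the recursion and the first stepsize bound $\beta_k\le(1-\rho_t)\delta_1/D$: the contribution $\rho_t^{k+1}\normfro{\X_0-\bar\X_0}\le\rho_t^{k+1}\sqrt{n}\delta_1$ plus the geometric tail $\sqrt{n}D\sum_{l=0}^{k}\rho_t^{k-l}\beta_l\le\sqrt{n}\delta_1(1-\rho_t^{k+1})$ telescopes exactly to $\sqrt{n}\delta_1$. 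The $l_{F,\infty}$ condition $\X_{k+1}\in\N_2$ is more delicate, because the per-agent quantity $\normfro{x_{i,k+1}-\bar x_{k+1}}$ is not implied by the $l_F$ recursion; this is where the second stepsize requirement $\beta_k\le\alpha\delta_1/(5D)$, together with the geometric relation $\delta_1\le\delta_2/(5\sqrt r)$ from \eqref{delta_1_and_delta_2}, must be exploited by repeating the perturbation argument agent-wise against the $l_{F,\infty}$-type contraction of DRCS inside $\N$, so that the accumulated per-agent gradient perturbation stays below $\delta_2$.
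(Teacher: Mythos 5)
There is a genuine gap in your one-step recursion. Your decomposition compares two retracted points from the same base: with $\xi_i=-\alpha\grad\varphi_i^t(\X_k)$, the bound you get from \eqref{ineq:ret_nonexpansive} with $y=z_{i,k+1}$ is
\[
\normfro{x_{i,k+1}-z_{i,k+1}}\le \normfro{x_{i,k}+\xi_i-\beta_k v_{i,k}-z_{i,k+1}}\le \beta_k\normfro{v_{i,k}}+\normfro{x_{i,k}+\xi_i-\Retr_{x_{i,k}}(\xi_i)}\le \beta_k D+M\normfro{\xi_i}^2,
\]
and the term $M\normfro{\xi_i}^2$ cannot be ``absorbed'': it is independent of $\beta_k$ (it is the consensus direction, not the gradient perturbation), so it does not merge into $\sqrt{n}\beta_k D$. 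At the stack level it is controlled only by $M\bigl(\sum_i\normfro{\xi_i}^4\bigr)^{1/2}\le 2M\alpha^2\delta_2 L_t\normfro{\X_k-\bar\X_k}$ (using \eqref{ineq:bound-of-gradh-i} and \eqref{ineq:bound-of-gradh}), i.e.\ it is proportional to the consensus error itself. Your recursion therefore reads $\normfro{\X_{k+1}-\bar\X_{k+1}}\le(\rho_t+2M\alpha^2\delta_2 L_t)\normfro{\X_k-\bar\X_k}+\sqrt{n}\beta_k D$, not the claimed inequality with factor exactly $\rho_t$; since $2M\alpha^2\delta_2 L_t$ can be of order $2/3$ while $\rho_t=\sqrt{(1+\sigma_2^t)/2}\ge 1/\sqrt{2}$, the modified factor can exceed $1$, so the argument does not even guarantee contraction, and the stepsize conditions (which are calibrated to $1-\rho_t$) no longer close the induction for $\N_1$.

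The paper avoids this by never introducing the auxiliary DRCS iterate: it applies the nonexpansiveness \eqref{ineq:ret_nonexpansive} once, with the \emph{fixed} competitor $y=\bar x_k\in\M$, which reduces everything to the Euclidean point $x_{i,k}-\alpha\grad\varphi_i^t(\X_k)-\beta_k v_{i,k}$ with no second-order retraction error (see \cref{lem:recursive lemma}), and then invokes the \emph{intermediate} inequality in the formal consensus result \cref{thm:linear_rate_consensus}, namely $\normfro{\X_k-\alpha\grad\varphi^t(\X_k)-\bar\X_k}\le\rho_t\normfro{\X_k-\bar\X_k}$, which bounds the pre-retraction quantity directly. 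That stronger form of the contraction is the ingredient your sketch is missing; the informal \cref{thm:linear_rate_consensus_informal} only exposes the post-retraction statement, which is why your route forces the two-retraction comparison. Separately, your treatment of $\N_2$ correctly identifies the relevant constants but there is no ``$l_{F,\infty}$-type contraction of DRCS'' available as a black box: the paper shows only that $\normfro{x_{i,k+1}-\bar x_{k+1}}\le\delta_2$ is \emph{maintained}, via the explicit rewrite \eqref{rewrite} of $\grad\varphi_i^t$, \cref{lem:total deviation from 1/N}, and a bound on $\normfro{\bar x_k-\bar x_{k+1}}$ from \cref{lem:bound_of_k_k+1}; that part of your proposal would need to be carried out in full.
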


We have $\beta_k=\mathcal{O}(\frac{1-\rho_t}{D})$ when $\alpha = \mathcal{O}(1)$. Note that   $t\geq \lceil\log_{\sigma_2}(\frac{1}{2\sqrt{n}})\rceil$ implies $\rho_t=\mathcal{O}(1)$; see \cref{sec:append_consensus}. When $\beta_k$ is constant, \cref{lem:convergence_of_deviation from mean}   suggests that $\X_k$ converges linearly to   an $\mathcal{O}(\beta_k)$-neighborhood of $\bar\X_k$.

We   present the convergence of Algorithm \ref{alg:DRPG}. { The proof is based on the new Lipschitz inequalities for the Riemannian gradient in \cref{lem:lipschitz} and the properties of retraction in \cref{lem:nonexpansive_bound_retraction}. We provide it in Appendix. }


\begin{theorem}\label{thm:convergence of alg 2}
	Under \cref{assump:lips in Rn,assump:doubly-stochastic,assum:stochastic-grad,assump:stepsize_beta}, suppose $\X_k\in\N$,   $t\geq \lceil\log_{\sigma_2}(\frac{1}{2\sqrt{n}})\rceil$,  $0<\alpha\leq\bar\alpha$.   If {\small\be\label{stepsize_betak}\beta_k =\frac{1}{\sqrt{k+1}} \cdot \min\{ \frac{1}{5L_g}, \frac{\alpha \delta_1  }{5D}, \frac {1-\rho_t}{ D} \delta_1 \},\ee  }
	it follows that
	{\small\begin{align}\label{ineq:convergence DRSGA}
	&\quad \min_{k\leq K} \E\normfro{\grad f(\bar x_k)}^2  
	 \leq  \frac{ 4(f(\bar x_0) - f^*) +  \frac{6L_g\Xi^2}{ n} \sum_{k=0}^{K}\beta_k^2}{\sum_{k=0}^{K}\beta_k} \\
	 &\quad +  \frac{  (2CD^2 L_G^2 + 4\cT_1 D^4)\sum_{k=0}^{K}\beta_k^3    + 4\cT_2 L_gD^4\sum_{k=0}^{K}\beta_k^4}{ 	\sum_{k=0}^{K}\beta_k}, \notag
	\end{align}}
	where   $C=\mathcal{O}(\frac{1}{(1-\rho_t)^2})$ is  given in \cref{coro:rate_of_consensus} in  Appendix. And   $\cT_1=2(4\sqrt{r} + 6 \alpha)^2 C^2+ 8 M^2$ and $\cT_2 = 201\alpha^2C^2 +  9M^2 . $
	Therefore, we have
	{\small\begin{align*}
	 	&\min_{k\leq K}\E\normfro{\grad f(\bar x_k)}^2
= \mathcal{O}\left(\frac{  f(\bar{ x}_0) - f^*  }{\tilde \beta\sqrt{K+1}} +\frac{ \Xi^2\ln (K+1)}{n\sqrt{K+1}}\right) \notag\\
	&\quad + \mathcal{O}\left( \frac{\max\{D^2,L_G^2\}\cdot( C + \cT_1 + \cT_2) }{ \sqrt{K+1}}\right) ,
	\end{align*}}
	where $\tilde \beta = \min\{1/L_g, (1-\rho_t)/D\}$. 
\end{theorem}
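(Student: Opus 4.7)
The plan is to obtain a descent-type inequality for $f(\bar x_k)$ along the iterates, handle the consensus error using \cref{lem:convergence_of_deviation from mean}, and finally telescope. Throughout, we keep $\X_k\in\N$ so the local analysis of consensus and retraction applies.

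First, I would expand $f(\bar x_{k+1})$ via the Lipschitz-type inequality \eqref{ineq:lipschitz} applied at $\bar x_k$:
\begin{equation*}
f(\bar x_{k+1})\le f(\bar x_k)+\inp{\grad f(\bar x_k)}{\bar x_{k+1}-\bar x_k}+\tfrac{L_g}{2}\normfro{\bar x_{k+1}-\bar x_k}^2.
\end{equation*}
The core task is to express $\bar x_{k+1}-\bar x_k$ as a ``negative Riemannian gradient step plus errors.'' Using the second-order retraction bound \eqref{ineq:ret_second-order} applied coordinate-wise and the definition of the consensus tangent step, the Euclidean mean $\hat x_{k+1}:=\frac{1}{n}\sum_i x_{i,k+1}$ decomposes as
$\hat x_{k+1}=\hat x_k-\beta_k\bar v_k+E_k$, where $\bar v_k=\frac{1}{n}\sum_i v_{i,k}$, the consensus averages vanish because $W^t$ is doubly stochastic (up to projection errors between $\p_{\T_{x_{i,k}}\M}$ and $\p_{\T_{\bar x_k}\M}$), and $\|E_k\|=O(\alpha\|\X_k-\bar\X_k\|^2+\beta_k^2 D^2)$ by the $O(\|\xi\|^2)$ retraction remainder. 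Projecting with $\p_{\St}$ and using its $O(1)$-Lipschitz continuity near $\M$ then gives the analogous bound for $\bar x_{k+1}-\bar x_k$, where the ``clean'' direction is $-\beta_k\grad f(\bar x_k)$ up to a term of order $L_G\|\X_k-\bar\X_k\|+$noise, using \eqref{ineq:lips_riemanniangrad} to pass from $\frac1n\sum_i \grad f_i(x_{i,k})$ to $\grad f(\bar x_k)$.

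Plugging this decomposition into the descent inequality, taking conditional expectation with respect to the filtration up to step $k$, and invoking \cref{assum:stochastic-grad} (unbiasedness, independence across agents, variance $\Xi^2$) produces
\begin{equation*}
\E f(\bar x_{k+1})\le \E f(\bar x_k)-\tfrac{\beta_k}{2}\E\normfro{\grad f(\bar x_k)}^2+\tfrac{L_g\beta_k^2\Xi^2}{2n}+R_k,
\end{equation*}
where the residual $R_k$ collects three contributions: (i) a term $C'\beta_k\cdot L_G^2\E\|\X_k-\bar\X_k\|^2/n$ from cross terms between $\grad f(\bar x_k)$ and $\frac1n\sum_i(\grad f_i(x_{i,k})-\grad f(\bar x_k))$; (ii) an $L_g\beta_k^2 D^2$ term from $\|\bar x_{k+1}-\bar x_k\|^2$; and (iii) the retraction/tangent-projection error, which scales like $(\sqrt r+\alpha)^2\|\X_k-\bar\X_k\|^2/n$ and $M^2$ times higher-order quantities, matching the constants $\cT_1,\cT_2$ stated in the theorem. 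Applying \cref{lem:convergence_of_deviation from mean} and its constant-stepsize corollary $\E\|\X_k-\bar\X_k\|^2/n\le C\beta_k^2D^2$ converts every consensus term into a power of $\beta_k$ times $D^2$.

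Finally, I would sum from $k=0$ to $K$, use $f(\bar x_k)\ge f^*$ to telescope the left side, and divide by $\sum_{k=0}^K\beta_k$ to bound $\min_{k\le K}\E\|\grad f(\bar x_k)\|^2$, yielding exactly \eqref{ineq:convergence DRSGA}. The rate statement then follows from the stepsize choice \eqref{stepsize_betak}: for $\beta_k=\tilde\beta/\sqrt{k+1}$ we have $\sum_{k=0}^K\beta_k=\Theta(\tilde\beta\sqrt{K+1})$, $\sum_{k=0}^K\beta_k^2=\Theta(\tilde\beta^2\ln(K+1))$, and $\sum_{k=0}^K\beta_k^3,\sum_{k=0}^K\beta_k^4=O(\tilde\beta^3)$, which collapses each bracket to the desired $\mathcal{O}(1/\sqrt{K+1})$ or $\mathcal{O}(\ln(K+1)/\sqrt{K+1})$ order. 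I expect the main obstacle to be the careful accounting in step (i)--(iii): namely, showing that the gap between $\bar x_{k+1}$ and the ideal tangent step at $\bar x_k$ is controlled by the consensus error and $\beta_k^2$ only, since this is where the differences among the projections $\p_{\T_{x_{i,k}}\M}$, the polar retraction remainder, and the IAM projection $\p_\St$ all interact nonlinearly and must be estimated uniformly over $\X_k\in\N$.
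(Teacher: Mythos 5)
Your plan follows essentially the same route as the paper: a Lipschitz-type descent inequality at $\bar x_k$ (via \cref{lem:lipschitz}), a decomposition of $\bar x_{k+1}-\bar x_k$ into a gradient step plus consensus, retraction, and IAM-versus-Euclidean-mean errors (the paper's \cref{lem:decrease lemma}, using \cref{lem:distance between Euclideanmean and IAM,lem:bound_of_two_average,lem:bound_of_grad}), the bound $\frac{1}{n}\normfro{\X_k-\bar\X_k}^2\le CD^2\beta_k^2$ from \cref{coro:rate_of_consensus}, and finally telescoping and the stepsize sums. The obstacle you flag at the end — uniformly controlling the interaction of the tangent projections, the polar retraction remainder, and the IAM projection over $\N$ — is exactly where the paper spends its effort, so the plan is sound.
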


\cref{thm:convergence of alg 2} together with \cref{coro:rate_of_consensus} implies that the iteration complexity of obtaining an $\epsilon-$stationary point defined in \cref{def:stationary} is $\mathcal{O}(1/\epsilon^2)$ in expectation. 
The communication round per iteration is   $t\geq \lceil\log_{\sigma_2}(\frac{1}{2\sqrt{n}})\rceil$  since we need to ensure $\X_k\in\N$. For sparse network, $t= \mathcal{O}(n^2 \log n)$ \cite{chen2020consensus}.   \\ 
 Following \cite{lian2017can},   if we use the constant stepsize $\beta_k=\frac{1}{2L_G+ \sqrt{(K+1)/n}}$ where $K$ is sufficiently large, we can obtain the following result  
 \begin{align*}&\quad \min_{k=0,\ldots,K} \E\normfro{\grad f(\bar x_k)}^2\\
 &\leq   \frac{ 8L_G(f(\bar x_0) - f^*)}{K+1} +\frac{ 8(f(\bar x_0) - f^*+ \frac{3L_G}{2})\Xi}{\sqrt{n(K+1)}}.\end{align*}
 More details are provided in \cref{coro:DRSGD constant stepsize} in Appendix. Therefore, if $K$ is sufficiently large, the convergence rate is $\mathcal{O}(1/\sqrt{nK}).$ To obtain an  $\epsilon-$stationary point, the computational complexity of single node is $\mathcal{O}(\frac{1}{n\epsilon^2}).$ However, the communication round  $t\geq \lceil\log_{\sigma_2}(\frac{1}{2\sqrt{n}})\rceil$ is too large. In practice, we find $t=1$ performs almost the same as $t=\infty$, which is shown in \cref{sec:numerical}.   This may be because that when the stepsize is very small, DRSGD will not deviate from the consensus algorithm DRCS too much. We leave the further discussion as future work. 

	\section{Gradient tracking on Stiefel manifold}\label{sec:gradient-tracking}
  In this section we study the decentralized gradient tracking method, which is based on the DIGing algorithm\cite{qu2017harnessing,nedic2017achieving} for solving Euclidean problems. With an auxiliary  gradient tracking sequence to estimate the full gradient, the constant stepsize can be used and faster convergence rate can be shown for the Euclidean algorithms \cite{nedic2017achieving,shi2015extra}. 
We describe our algorithm in Algorithm \ref{alg:DRG_GT}, which is named as Decentralized Riemannian  Gradient Tracking Algorithm (DRGTA).  
\begin{algorithm}[ht]
	\caption{Decentralized Riemannian  Gradient Tracking  over Stiefel manifold (DRGTA) for Solving \eqref{opt_problem}}\label{alg:DRG_GT}
	\begin{algorithmic}[1]
		\State{ Input: initial point $\X_0 \in \N$,  an integer $t\geq  \log_{\sigma_2}(\frac{1}{2\sqrt{n}}) $, $0< \alpha\leq \bar\alpha$  and stepsize $\beta$ according to \eqref{stepsize_beta}. } 
		\State Let  $y_{i,0} = \grad f_i(x_{i,0})$ on each node $i\in[n]$. 
		\For{$k=0,\ldots$\Comment{For each node $i\in[n]$, in parallel}} 
		\State{Projection onto tangent space: $v_{i,k} = \p_{\T_{x_{i,k}}\M}y_{i,k} $.}
		\State{Update  $ x_{i,k+1} = \Retr_{x_{i,k}}(\alpha\p_{\T_{x_{i,k}}\M} (\sum_{j=1}^n W^t_{ij} x_{j,k}) - \beta v_{i,k}).$ }
		\State{Riemannian gradient tracking:  \[y_{i,k+1} = \sum_{j=1}^n W^t_{ij}  y_{j,k} + \grad f_i(x_{i,k+1}) - \grad  f_i(x_{i,k}).   \] }
		\EndFor
	\end{algorithmic}
\end{algorithm}

In \cref{alg:DRG_GT}, the step 4 is to project the direction $y_{i,k}$ onto the tangent space $\T_{x_{i,k}}\M$, which follows a retraction update.    The sequence $\{ y_{i,k}\}$ is to approximate the Riemannian gradient $\grad f_i(x_{i,k})$. More specifically, the sequence $\{ y_k\}$ tracks the average Riemannian gradient $\frac{1}{n}\sum_{i=1}^n\grad f_i(x_{i,k})$. Although, it is not mathematically sound to do addition operation between different tangent space in differential geometry,  we can view  $\grad f_i(x_{i,k})$ as the projected Euclidean gradient.   {Note that $y_{i,k}$ is not necessarily on the tangent space $\T_{x_{i,k}}\M.$   Therefore, it is important to define $v_{i,k} = \p_{\T_{x_{i,k}}\M} y_{i,k}  $  so that we can use the properties of retraction in \cref{lem:nonexpansive_bound_retraction}. {Such a projection onto tangent space step, followed by the retraction operation, distinguishes the algorithm from the Euclidean space gradient tracking algorithms.}  Multi-step consensus of gradient is also required in step 5 and step 6. The consensus stepsize $\alpha$ satisfies the same condition as that of \cref{alg:DRPG}. 

\subsection{Convergence of Riemannian gradient tracking}
We first briefly revisit the idea of gradient tracking (GT) algorithm  DIGing in Euclidean space. Note that if we consider the decentralized optimization problem \eqref{opt_problem} without the Stiefel manifold constraint,  then  \cref{alg:DRG_GT} is exactly the same as the DIGing. Since the Riemannian gradient $\grad f_i$ becomes simply the Euclidean gradient $\nabla f_i$ and   projection onto the tangent space and retraction  are not needed.
The main advantage of Euclidean gradient tracking algorithm is that one can use constant stepsize $\beta>0$, which is due to following observation:  for all $k\geq 0$, it follows that
\[\frac{1}{n}\sum_{i=1}^n y_{i,k} =\frac{1}{n}\sum_{i=1}^n \nabla f_i(x_{i,k}).  \]
That is, the average of sequence $y_{i,k}$ is the same as that of $\nabla f_i(x_{i,k})$. It can be shown that  if the following inexact gradient sequence, then it  converges to   a stationary point  \cite{nedic2017achieving}
\[  x_{i,k+1} = \sum_{i=1}^n W_{ij} x_{j,k}  - \beta\frac{1}{n}\sum_{i=1}^n \nabla f_i(x_{i,k}).  \]
 However, the average of gradient information is unavailable in the decentralized setting.  Therefore, GT uses $\frac{1}{n}\sum_{i=1}^n y_{i,k}$ to  approximate $\frac{1}{n}\sum_{i=1}^n \nabla f_i(x_{i,k})$.   Inspired by this,   $y_{i,k}$  is used to approximate the  Riemannian gradient, i.e., if 
\[ y_{i,k+1} = \sum_{j=1}^n W^t_{ij}  y_{j,k} + \grad f_i(x_{i,k+1}) - \grad  f_i(x_{i,k}) ,   \]
then it follows that
\[ \frac{1}{n}\sum_{i=1}^n y_{i,k} = \frac{1}{n}\sum_{i=1}^n \grad f_i(x_{i,k})\quad \ie \quad \hat y_k = \hat g_k.  \]
Therefore, $\{\Y_k\}$ tracks the average of Riemannian gradient, and if $\normfro{\hat g_k}\rightarrow 0$ and the sequence $\{\X_k\}$ achieves consensus, then $\X_k$ also converges to the critical point. 
 This is because	\begin{align*} 
 &\quad\normfro{\grad f(\bar x_k)}^2 \leq 2\normfro{\hat g_k}^2 + 2\normfro{\grad f(\bar x_k)-\hat g_k}^2\\
 &\stackrel{\eqref{ineq:lips_riemanniangrad}}{\leq} 2\normfro{\hat g_k}^2 + \frac{2L_G^2}{n}\normfro{\X_k-\bar\X_k}^2. \end{align*} 
 {To achieve consensus, we still need multi-step consensus in DRGTA as DRSGD. The multi-step consensus also helps us to show the uniform boundedness of $y_{i,k}$ and $v_{i,k}$, $i\in[n]$ for all $k\geq 0,$ which is important to guarantee $\X_k\in\N$. } We   get that the sequence stays in consensus region $\N$ in \cref{lem:uniform bound y}. We provide the proof  in Appendix. 


\begin{lemma}[Uniform bound of $y_i$ and stay in $\N$]\label{lem:uniform bound y}
	Under \cref{assump:lips in Rn,assump:doubly-stochastic},	let $\X_0\in\N$, $t\geq \log_{\sigma_2}(\frac{1}{2\sqrt{n}})$, $\alpha$ satisfy $0<\alpha\leq\bar\alpha$,   $\beta$ satisfy $0\leq\beta\leq\bar\beta:= \min\{ \frac {1-\rho_t}{ L_G+2D} \delta_1, \frac { \alpha\delta_1  }{5(L_G+2D)}\}$, then $\normfro{y_{i,k}}\leq L_G+2D$ for all $i\in[n]$ and $\X_k\in\N$ for all $k\geq 0$. Moreover, we have \be\label{ienq:bound_x_gt}\frac{1}{n}\normfro{\X_k-\bar\X_k}^2\leq  C_1(L_G+2D)^2\beta^2, k \geq 0\ee for some $C_1=\mathcal{O}(\frac{1}{(1-\rho_t)^2})$ and $C_1$ is independent of $L_G, D$.  
\end{lemma}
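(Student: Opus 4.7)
My plan is to establish the three claims simultaneously by strong induction on $k$: (a) $\normfro{y_{i,k}}\leq L_G+2D$ for every $i\in[n]$, (b) $\X_k\in\N$, and (c) the sharper consensus estimate $\tfrac{1}{n}\normfro{\X_k-\bar\X_k}^2 \leq C_1(L_G+2D)^2\beta^2$ with $C_1=\mathcal{O}(1/(1-\rho_t)^2)$. These three bounds are tightly coupled: (a) controls how much the tracking direction perturbs the pure consensus iteration (feeding into (b) and (c)); (c) in turn forces the per-step movement $\normfro{x_{i,k+1}-x_{i,k}}$ to be $\mathcal{O}(\beta(L_G+2D))$; and that small movement, combined with the gradient-tracking recursion for $\Y_k$, keeps $\normfro{\Y_{k+1}-\bar\Y_{k+1}}$ bounded and restores (a). The base case is immediate: $\X_0\in\N$ by hypothesis, and $y_{i,0}=\grad f_i(x_{i,0})$ gives $\normfro{y_{i,0}}\leq D\leq L_G+2D$ by \cref{assump:lips in Rn} and the non-expansiveness of $\p_{\T_x\M}$.

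For the consensus part of the inductive step, I would exploit that $v_{i,k}=\p_{\T_{x_{i,k}}\M}y_{i,k}$ and the tangent-space projection is non-expansive, so $\normfro{v_{i,k}}\leq\normfro{y_{i,k}}\leq L_G+2D$ by the inductive hypothesis. The DRGTA $x$-update therefore differs from the pure multi-step consensus iteration \eqref{consensus_rga} by at most $\beta(L_G+2D)$ per agent. Reusing the perturbation argument behind \cref{lem:convergence_of_deviation from mean} --- namely the consensus contraction \eqref{linear rate of consensus-0} from \cref{thm:linear_rate_consensus_informal} combined with the non-expansive polar retraction \eqref{ineq:ret_nonexpansive} --- produces the recursion
\begin{align*}
\normfro{\X_{k+1}-\bar\X_{k+1}} \le \rho_t\,\normfro{\X_k-\bar\X_k} + \sqrt{n}\,\beta(L_G+2D),
\end{align*}
which unrolls into the geometric-series bound (c). The stated range $\beta\leq\bar\beta$ is exactly what is required to keep the right-hand side inside $\sqrt{n}\delta_1$ and its $\normfroinf{\cdot}$-analogue inside $\delta_2$, yielding (b).

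To re-establish (a) at step $k+1$, I would use the conservation law $\hat y_{k+1}=\hat g_{k+1}$, which follows by multiplying the tracking update by $\tfrac{1}{n}\one^\top$ and using $\one^\top W^t=\one^\top$. This gives $\normfro{y_{i,k+1}}\leq D + \normfro{\Y_{k+1}-\bar\Y_{k+1}}$. The deviation satisfies the contractive recursion $\normfro{\Y_{k+1}-\bar\Y_{k+1}}\leq \sigma_2^t\normfro{\Y_k-\bar\Y_k} + \normfro{G_{k+1}-G_k}$, and by \cref{lem:lipschitz} the driving term is bounded by $L_G\normfro{\X_{k+1}-\X_k}$. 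The per-step movement $\normfro{x_{i,k+1}-x_{i,k}}$ is in turn controlled via the retraction estimate \eqref{ineq:ret_second-order} and the bound on the descent direction: the consensus piece $\alpha\p_{\T_{x_{i,k}}\M}(\sum_j W^t_{ij}x_{j,k})$ is $\mathcal{O}(\beta(L_G+2D))$, because the projection annihilates $x_{i,k}$ and the residual $\sum_j W^t_{ij}(x_{j,k}-x_{i,k})$ is controlled by (c), while the gradient piece is at most $\beta(L_G+2D)$. Summing the geometric series for $\normfro{\Y_k-\bar\Y_k}$ closes the loop and produces (a) at $k+1$.

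The hard part will be the tight bookkeeping: since (a), (b), (c) each enter the proof of the others, all three bounds must be propagated simultaneously with constants that do not degrade with $k$. Verifying that the particular stepsize bound $\beta\leq\min\{(1-\rho_t)\delta_1/(L_G+2D),\,\alpha\delta_1/(5(L_G+2D))\}$ is precisely sufficient to close every loop at the stated values --- and in particular that the amplification factor arising from the geometric series in the $\Y$-recursion does not push the bound in (a) above $L_G+2D$ --- is where the most careful computation resides. The Stiefel-specific ingredients (the projection annihilation property $\p_{\T_x\M}x=0$, the non-expansive polar retraction, and the Lipschitz-type inequality \eqref{ineq:lips_riemanniangrad}) are what make this bookkeeping go through in place of the purely linear arguments used in Euclidean DIGing.
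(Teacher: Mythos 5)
Your high-level plan---a simultaneous induction coupling the uniform bound on $y_{i,k}$, membership in $\N$, and the $\mathcal{O}(\beta^2)$ consensus error---matches the paper's structure for the first two claims, and your treatment of the $\X$-recursion $\normfro{\X_{k+1}-\bar\X_{k+1}}\le\rho_t\normfro{\X_k-\bar\X_k}+\sqrt{n}\,\beta(L_G+2D)$ is exactly the paper's (it reuses \cref{lem:recursive lemma} with $\normfro{v_{i,k}}\le\normfro{y_{i,k}}$). The gap is in how you restore the uniform bound on $y_{i,k+1}$. You run the tracking-error recursion in the stacked Frobenius norm, $\normfro{\Y_{k+1}-\hat\bG_{k+1}}\le\sigma_2^t\normfro{\Y_k-\hat\bG_k}+\normfro{\bG_{k+1}-\bG_k}$, and then pass to the per-agent bound via $\normfro{y_{i,k+1}}\le D+\normfro{\Y_{k+1}-\hat\bG_{k+1}}$. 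This does not close with constants independent of $n$: the driving term $\normfro{\bG_{k+1}-\bG_k}\le L_G\normfro{\X_{k+1}-\X_k}$ carries a factor $\sqrt{n}$ from stacking, and your key claim that the per-agent consensus displacement $\alpha\p_{\T_{x_{i,k}}\M}(\sum_j W_{ij}^t x_{j,k})$ is $\mathcal{O}(\beta(L_G+2D))$ is not available: claim (c) controls only the \emph{averaged} Frobenius error, so converting it to a per-agent bound costs another factor $\sqrt{nC_1}$, while the only $n$-free per-agent bound is $\normfro{\grad\varphi_i^t(\X_k)}\le 2\delta_2$ from $\X_k\in\N_2$ (inequality \eqref{ineq:bound-of-gradh-i}), which is $\mathcal{O}(\delta_2)$, not $\mathcal{O}(\beta)$. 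Either way the steady state of your $\Y$-recursion is of order $\sqrt{n}\,L_G\delta_2$ (or $n L_G\delta_1\alpha$), which exceeds $L_G+2D$ once $n$ is large, so the induction hypothesis (a) cannot be restored at the stated value.

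The paper's proof avoids this by running the recursion \emph{per agent} on $\normfro{y_{i,k}-\hat g_{k-1}}$ and exploiting the multi-step consensus inside the tracking update: $\max_i\sum_j|W_{ij}^t-1/n|\le\sqrt{n}\,\sigma_2^t\le\tfrac12$, so the per-agent deviation contracts by a factor $\tfrac12$ each iteration. That strong contraction absorbs a driving term that is only $\mathcal{O}(L_G)$, namely $L_G\normfro{x_{i,k+1}-x_{i,k}}\le 2L_G\alpha\delta_2+L_G\beta\normfro{y_{i,k}}\le \tfrac{L_G}{3}+\tfrac{L_G\alpha\delta_1}{5}$, yielding $\normfro{y_{i,k+1}-\hat g_k}\le\tfrac12(2D+L_G)+\tfrac{L_G}{3}+\tfrac{L_G\alpha\delta_1}{5}\le D+L_G$ and hence $\normfro{y_{i,k+1}}\le L_G+2D$. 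The structural point you miss is that the $+L_G$ slack in the target bound is consumed by a perturbation that does \emph{not} vanish as $\beta\to 0$; no $\mathcal{O}(\beta)$ estimate of the per-step movement is needed or true per agent. Relatedly, the paper proves \eqref{ienq:bound_x_gt} \emph{after} establishing (a) and (b), by the argument of \cref{coro:rate_of_consensus} with $D$ replaced by $L_G+2D$, rather than inside the induction, which removes the circularity between (c) and (a) that your bookkeeping would otherwise have to resolve.
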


We   present the $\mathcal{O}(1/\epsilon)$ iteration complexity to obtain the $\epsilon-$stationary point of \eqref{opt_problem} as follows. The proof of DIGing  can be unified by the primal-dual framework  \cite{alghunaim2019decentralized}. However, DRGTA cannot be rewritten in the primal-dual form.   {The proof is mainly established with the help of \cref{lem:lipschitz} and the properties of IAM.  }  We provide it in Appendix.
\begin{theorem}\label{thm:theorem-convergence-grad-tracking}
		Under \cref{assump:lips in Rn,assump:doubly-stochastic},  let    $\X_0\in\N$, $t\geq \lceil\log_{\sigma_2}(\frac{1}{2\sqrt{n}})\rceil$,  $0<\alpha\leq\bar\alpha$,  and 
	\be\label{stepsize_beta} 0< \beta\leq \min\{  \bar\beta,\frac{1}{8L_G},\frac{1}{4L_G(2 \G_3  + (8 \tilde{C}_0   +  \frac{1}{2} \tilde{C}_2 ) \alpha \delta_1)}\}, \ee
	where $\bar\beta$ is given in \cref{lem:uniform bound y}.  
	Then it follows that for the sequences generated by \cref{alg:DRG_GT} 
	 \be\label{convergence_Y}
\min_{k=0,\ldots,K} \frac{1}{n}\normfro{\Y_k}^2
	\leq  \frac{8(f(\bar x_0) -  f^*  +   \tilde{C}_4 +    \G_4 L_G )}{\beta\cdot K},\ee
	\begin{equation}\label{convergence_X}
	\bad
	&\quad\min_{k\leq K}\frac{1}{n} \normfro{\X_k - \bar\X_k}^2\\
	&\leq  \frac{8\beta(f(\bar x_0) -  f^*  +   \tilde{C}_4+     \G_4 L_G ) \tilde C_0 + \tilde C_1 }{  K},
	\ead\end{equation}
	\be \label{convergence_grad} \bad
	&\quad \min_{k\leq K} \normfro{\grad f(\bar x_k)}^2 \\
	&\leq {\small\frac{(16+\alpha^2 \delta_1^2\tilde C_0)(f(\bar x_0) -  f^*  +   \tilde{C}_4+     \G_4 L_G ) + \tilde C_1 L_G }{\beta\cdot K},}\ead\ee
	where the constants above are given by
	
	\begin{align*}
	&\G_3 =\G_1\tilde{C}_0 +\G_0\tilde{C}_0  + \G_2, \\& \G_4 = \frac{ \G_0\tilde{C}_0  \delta_1^2\alpha^2  }{25}  + \tilde{C}_1(\G_1+4rC_1),\\
	&\tilde{C}_0 = \frac{2}{(1-\rho_t)^2}, \quad   \tilde{C}_1 = \frac{2}{1-\rho_t^2} \cdot\frac{1}{n}\normfro{\X_0 - \bar \X_0}^2, \\
	& \tilde{C}_2 = \frac{2}{(1-\sigma_2^t)^2}, \quad \tilde{C}_3 = \frac{2}{1-\sigma_2^{2t}} \cdot	\frac{1}{n}\normfro{\Y_0 - \hat\bG_0}^2,\\
	&\tilde{C}_4 = ( 8\alpha^2\tilde{C}_1\tilde{C}_2L_G^2 +\tilde{C}_3)\cdot\frac{\beta}{2}=\mathcal{O}(\frac{   L_G}{(1-\sigma_2^t)^2}).
	 \end{align*} 
\end{theorem}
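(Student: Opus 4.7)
The plan is to combine three ingredients: a Lipschitz-type descent inequality for $f(\bar x_k)$ using \cref{lem:lipschitz}, the consensus estimate for $\X_k$ already supplied by \cref{lem:uniform bound y}, and a gradient-tracking error estimate for $\Y_k - \hat\bG_k$ (where $\hat\bG_k = \mathbf{1}_n \otimes \hat g_k$ and $\hat g_k = \tfrac{1}{n}\sum_i \grad f_i(x_{i,k})$). The crucial identity exploited throughout is $\hat y_k = \hat g_k$ for all $k\ge 0$, obtained by induction from $\hat y_{k+1} = \hat y_k + \hat g_{k+1} - \hat g_k$ and the double stochasticity of $W^t$. By \cref{lem:uniform bound y} the iterates remain in $\N$, all $y_{i,k}$ are uniformly bounded, and $\tfrac{1}{n}\normfro{\X_k-\bar\X_k}^2 \le C_1(L_G+2D)^2\beta^2$, so the invariants needed to apply the retraction properties from \cref{lem:nonexpansive_bound_retraction} always hold.

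For the descent step I would apply \eqref{ineq:lipschitz} at $x=\bar x_k$, $y=\bar x_{k+1}$, both on $\M$, giving $f(\bar x_{k+1}) \le f(\bar x_k) + \inp{\grad f(\bar x_k)}{\bar x_{k+1}-\bar x_k} + \tfrac{L_g}{2}\normfro{\bar x_{k+1}-\bar x_k}^2$. The next step is to expand $\bar x_{k+1}-\bar x_k$ around $-\beta\,\hat v_k$, where $\hat v_k = \tfrac{1}{n}\sum_i v_{i,k}$; since $v_{i,k}=\p_{\T_{x_{i,k}}\M}y_{i,k}$, one splits $\hat v_k = \grad f(\bar x_k) + (\hat v_k - \hat g_k) + (\hat g_k - \grad f(\bar x_k))$, controlling the first error via the Lipschitzness of the projection (using that $\X_k\in\N$) and the second via \eqref{ineq:lips_riemanniangrad} together with the consensus bound. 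The second-order retraction property \eqref{ineq:ret_second-order} converts $\bar x_{k+1} - \hat x_{k+1}$ into an $O(\beta^2)$ error, and the $\mathcal{P}_{\St}$ projection that defines the IAM can be handled using that $\hat x_{k+1}$ is already $O(\beta)$-close to $\St$. After collecting terms, this yields an inequality of the schematic form
\begin{equation*}
f(\bar x_{k+1}) \le f(\bar x_k) - \tfrac{\beta}{4}\normfro{\grad f(\bar x_k)}^2 - c_1\beta\,\tfrac{1}{n}\normfro{\Y_k}^2 + c_2\beta\,\tfrac{1}{n}\normfro{\X_k-\bar\X_k}^2 + c_3\beta\,\tfrac{1}{n}\normfro{\Y_k-\hat\bG_k}^2,
\end{equation*}
provided $\beta$ is small enough, which is exactly the content of the stepsize condition \eqref{stepsize_beta}.

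To close the recursion I would establish a linear contraction for the gradient tracking error by subtracting $\hat\bG_{k+1}$ from the update of $\Y_{k+1}$, using $W^t\hat\bG_k=\hat\bG_k$ and Young's inequality to obtain $\tfrac{1}{n}\normfro{\Y_{k+1}-\hat\bG_{k+1}}^2 \le \tfrac{1+\sigma_2^{2t}}{2}\tfrac{1}{n}\normfro{\Y_k-\hat\bG_k}^2 + O\bigl(L_G^2\beta^2\,\tfrac{1}{n}\normfro{\Y_k}^2\bigr)$, where the $L_G$-smoothness from \eqref{ineq:lips_riemanniangrad} converts $\grad f_i(x_{i,k+1}) - \grad f_i(x_{i,k})$ into a bound involving $x_{i,k+1}-x_{i,k}$, which is itself controlled by $\beta\normfro{v_{i,k}}$ plus consensus error. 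Summing this geometric series produces the constant $\tilde C_3$ and the factor $\tfrac{1}{(1-\sigma_2^t)^2}$ hidden in $\tilde C_2, \tilde C_4$. Plugging both the consensus bound from \cref{lem:uniform bound y} and this gradient-tracking bound into the descent inequality, telescoping from $0$ to $K$, and dividing by $\beta K$ yields \eqref{convergence_Y}; the bound \eqref{convergence_X} follows from the explicit consensus estimate together with \eqref{convergence_Y}; and \eqref{convergence_grad} is obtained from the inequality $\normfro{\grad f(\bar x_k)}^2 \le 2\normfro{\hat g_k}^2 + \tfrac{2L_G^2}{n}\normfro{\X_k-\bar\X_k}^2$ combined with $\hat g_k=\hat y_k$ and Jensen's inequality. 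The main obstacle will be the descent step itself: carefully expanding $\bar x_{k+1}-\bar x_k$ into a clean leading term $-\beta\,\grad f(\bar x_k)$ plus controllable residuals requires simultaneously handling the polar retraction error, the mismatch between $\hat x_{k+1}$ and $\bar x_{k+1}$, and the fact that $v_{i,k}$ uses $\p_{\T_{x_{i,k}}\M}$ at the local base point rather than at $\bar x_k$; getting the constants so that the induced $\grad f$ term is strictly dominant under the stated bound on $\beta$ is where most of the technical work lies.
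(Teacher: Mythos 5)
Your plan follows essentially the same route as the paper's proof: a per-iteration descent inequality built from \cref{lem:lipschitz}, the second-order retraction bound and the IAM estimates, combined with summed recursions for the consensus error and the tracking error $\Y_k-\hat\bG_k$ (the paper states these as norm recursions and sums them with a lemma of Xu et al., which is equivalent to your squared-norm contraction via Young), then telescoped and closed using $\hat y_k=\hat g_k$ and $\normfro{\grad f(\bar x_k)}^2\le 2\normfro{\hat g_k}^2+\tfrac{2L_G^2}{n}\normfro{\X_k-\bar\X_k}^2$. The one step to be careful about is the term $\hat v_k-\hat g_k$: its control must use that $\grad f_i(x_{i,k})$ has no normal component at $x_{i,k}$, so that $\p_{N_{x_{i,k}}\M}y_{i,k}=\p_{N_{x_{i,k}}\M}\bigl(y_{i,k}-\grad f_i(x_{i,k})\bigr)$ is bounded by tracking plus consensus error (the paper pairs $y_{i,k}-v_{i,k}$ with $\hat g_k-\grad f_i(x_{i,k})$ for exactly this reason); appealing only to ``Lipschitzness of the projection'' would pair an $O(D)$ quantity with the normal component and leave a non-vanishing $O(\beta D^2)$ residual after telescoping, degrading the rate below $O(1/K)$.
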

The constants $\G_0 = \mathcal{O}(r^2C_1)$ , $\G_1 = \mathcal{O}(r^2C_1)$ and $\G_2 = \mathcal{O}(M)$ are given in \cref{lem:descent-grad-tracking} in the appendix. 
We have $\G_3 =\mathcal{O} (\frac{r^2C_1}{(1-\rho_t)^2} + M)$ and $\G_4 =\mathcal{O} (\frac{r^2 C_1\delta_1^2}{1-\rho_t^2} )$. Recall that $\beta\leq  \bar \beta$ is required to guarantee that the sequence $\{\X_k\}$ always stays in the consensus region $\N$. And note that $\rho_t$ is the linear rate of Riemannian consensus, which is greater than $\sigma_2^t.$ The stepsize $\beta$ follows  \[ \beta =\mathcal{O}(\min\{\frac{1-\rho_t}{L_G+2D}, \frac{(1-\rho_t)^2}{L_G}\cdot\frac{1}{r^2C_1 + M(1-\rho_t)^2}\}).\]
This matches the bound of DIGing \cite{qu2017harnessing,nedic2017achieving}. 
Then \cref{thm:theorem-convergence-grad-tracking} suggests that  the consensus error rate is $\mathcal{O}( \frac{1 }{(r^2C_1+M) L_G}\cdot\frac{f(\bar x_0 ) - f^*}{K} + \frac{\normfro{\X_0-\bar\X_0}^2}{n(1-\rho_t^2)K})$ and the convergence rate of  $\min\limits_{k=0,\ldots,K} \normfro{\grad f(\bar x_k)}^2 $ is given by $\mathcal{O}(\frac{(r^2C_1+M)(L_G+2D)(f(\bar x_0 ) - f^*)) }{ K(1-\rho_t)^2 }+ \frac{\normfro{\X_0-\bar\X_0}^2}{n(1-\rho_t)^4T}+\frac{r^2C_1 \delta_1^2 L_G}{ K (1-\rho_t)^6})$. Moreover,  if the initial points satisfy $x_{1,0}=x_{2,0}=\ldots=x_{n,0},$ we have $\tilde C_1 = \tilde C_3 = \tilde C_4 = 0.$



\section{Numerical experiment}\label{sec:numerical}

We solve the following decentralized eigenvector problem:
\be\label{prob:eigenvector}\min_{\X\in \M^n}  -\frac{1}{2n}\sum_{i=1}^n x_i^\top A_i^\top A_i x_i,\quad \st \quad x_1=\ldots=x_n,  \ee
where $A_i\in\R^{m_i\times d}, i\in[n]$ is the local data matrix in local agent and $m_i$ is the sample size. Denote the global data matrix by $A:= [A_1^\top \ A_2^\top \ldots A_n^\top ]^\top$. It is known that the global minimizer of \eqref{prob:eigenvector} is given by the first $r$ leading eigenvectors of $A^\top A =\sum_{i=1}^n A_i^\top A_i,$ denoted by $x^*$.   DRSGD and DRGTA are only proved to converge to the critical points, but we find they always converge to $x^*$ in our experiments. Denote the column space of a matrix $x$ by $[x]$.   {{To measure the quality of the solution}},  the distance between column space $[x]$ and $[y]$ can be defined via the canonical correlations between $x\in\R^{d\times r}$ and $y\in\R^{d\times r}$\cite{golub1995canonical}. One can define it by
\[ d_s(x,y):= \min_{Q\in\mathrm{O}(r)} \normfro{u Q - v },\]
where $\mathrm{O}(r)$ is the orthogonal group, $u$ and $v$ are the orthogonal basis of $[x]$ and $[y]$, respectively. In the sequel, we fix $\alpha=1$ and generate  the initial points uniformly randomly satisfying $x_{1,0}=\ldots=x_{n,0}\in\M$. If full batch gradient is used in \cref{alg:DRPG}, we call it DRDGD, otherwise one stochastic gradient is uniformly sampled without replacement  in DRSGD. In DRSGD, one epoach represents  the number of complete passes through the dataset, while one iteration is used in the deterministic algorithms. For DRSGD, we set the maximum epoch to $200$ and early stop it if $d_s(\bar{x}_k, x^*)\leq 10^{-5}$. 
For DRGTA and DRDGD, we set the maximum iteration number to $10^4$  and the termination condition is $d_s(\bar{x}_k, x^*)\leq 10^{-8}$ or $\normfro{\grad f(\bar x_k)}\leq 10^{-8}.$  We set  $\beta_k= \frac{\hat \beta}{\frac{1}{n}\sum_{i=1}^n m_i}$ for DRGTA and DRDGD where $\hat \beta$ will be specified later. For DRSGD, we set $\beta = \frac{\hat \beta}{\sqrt{200}}$. We  select the weight matrix $W$ to be the  Metroplis constant weight \cite{shi2015extra}. 
\subsection{Synthetic data}
We   report the convergence results of DRSGD, DRDGD and DRGTA with different $t$ and $\hat\beta$ on synthetic data. 
We fix $m_1=\ldots=m_n=1000$,  $d=100$ and $r=5$ and generate $m_1\times n$ i.i.d  samples   following standard multi-variate  Gaussian distribution to obtain $A$. Let $A=USV^\top$ be the truncated SVD. Given an eigengap $\Delta\in(0,1)$, we modify the singular values of $A$ to be a geometric sequence, i.e. $S_{i,i}=S_{0,0}\times \Delta^{i/2}, i\in[d].$ Typically, larger $\Delta$ results in more difficult problem.  
\begin{figure*}[ht]
	\begin{center}
		\minipage{0.33\textwidth}
		\subfigure[DRSGD]{
			{\includegraphics[width= \linewidth]{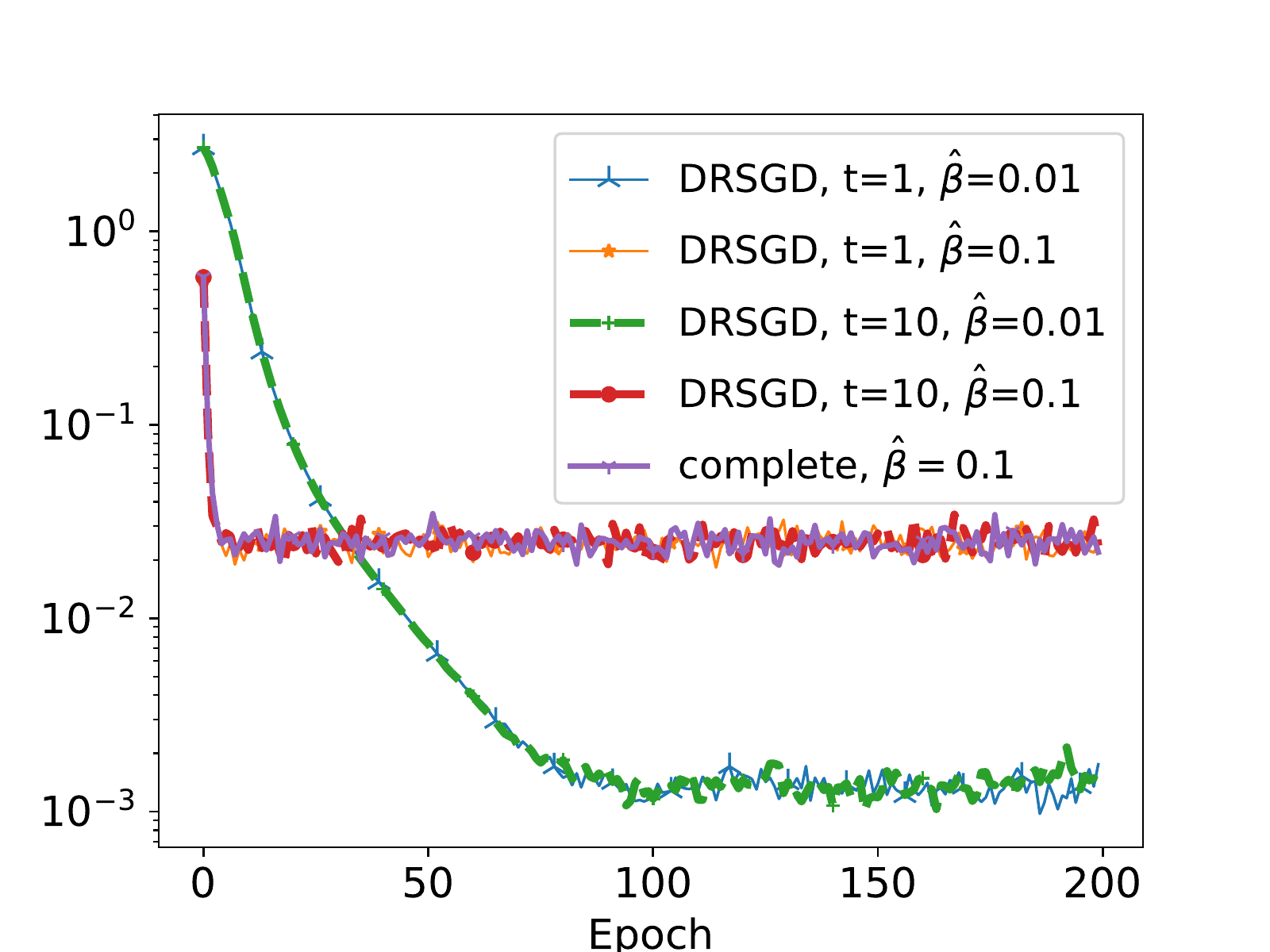}} }
		\endminipage
		\minipage{0.33\textwidth}
		\subfigure[DRDGD]{
			{\includegraphics[width=\linewidth]{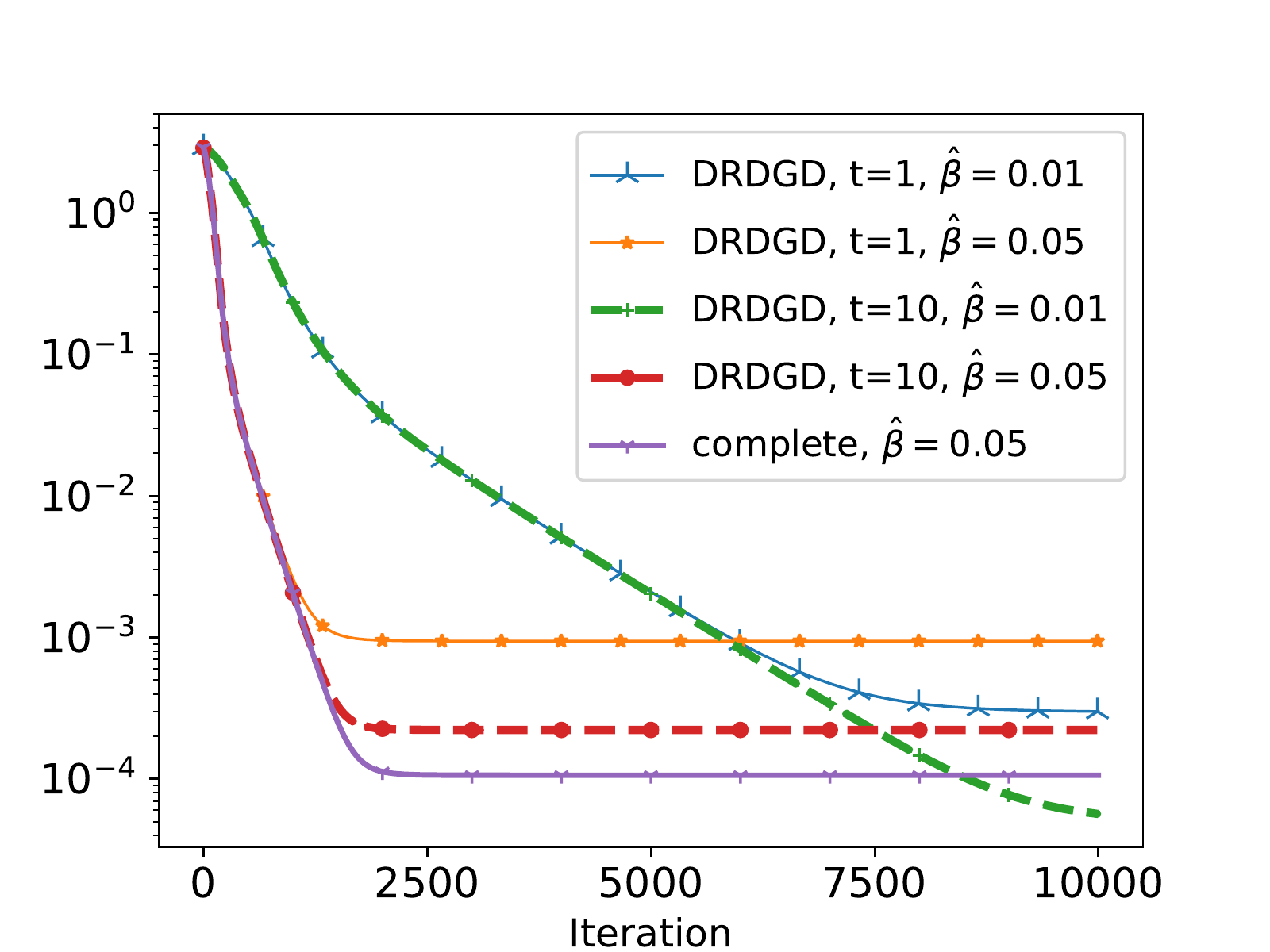}} }
		\endminipage
		\minipage{0.33\textwidth}
		\subfigure[DRGTA]{
			{\includegraphics[width= \linewidth]{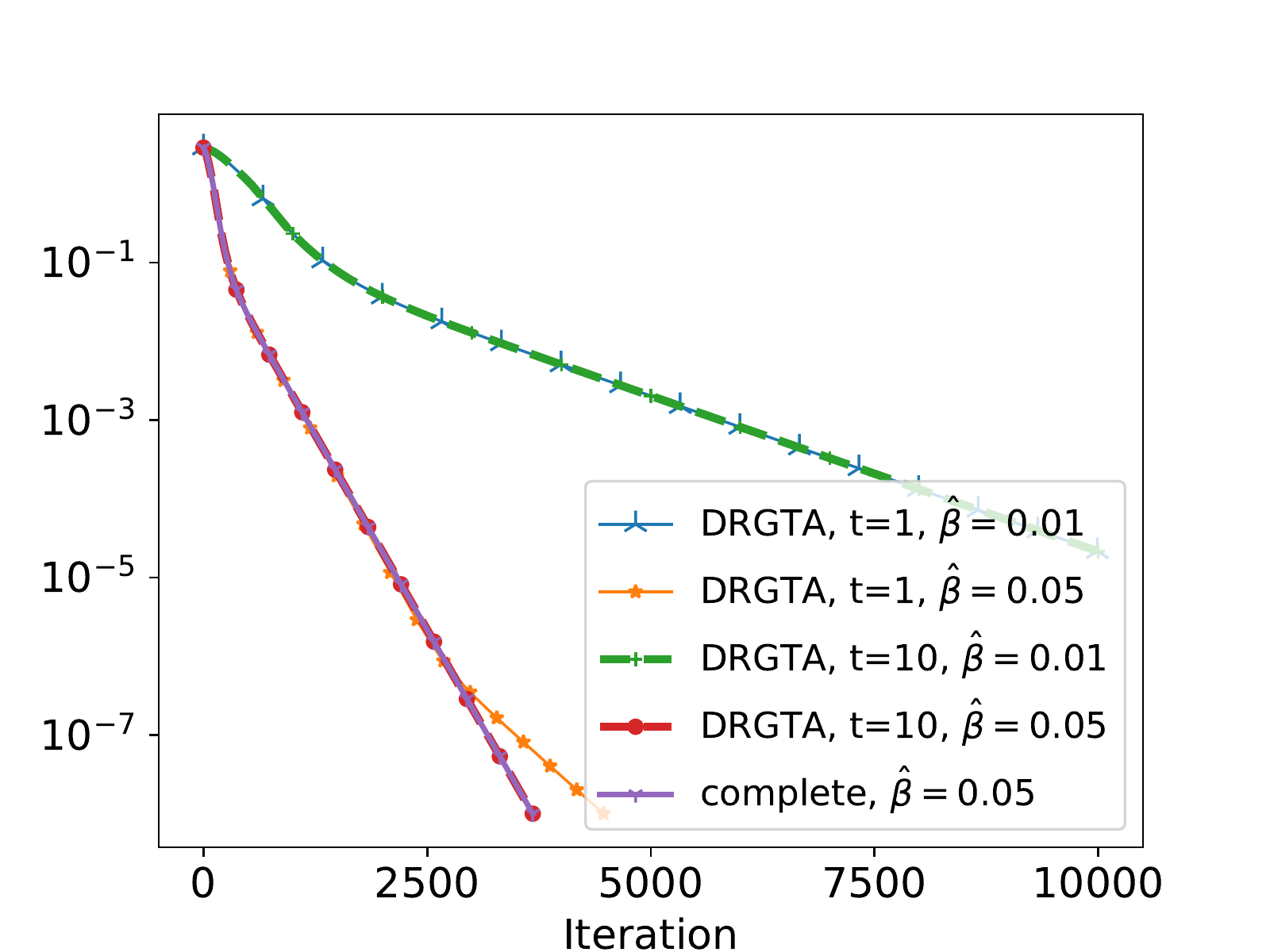}} }
		\endminipage
		\caption{Synthetic data, agents number $n=32,$ eigengap $\Delta=0.8$. y-axis: log-scale $d_s(\bar x_k, x^*)$. }\label{fig:linear_syn}
		\label{figure:synthetic-n-32}
	\end{center}
\end{figure*}
In \cref{figure:synthetic-n-32}, we show the results of DRSGD, DRDGD and DRGTA on the data with $n=32$ and $\Delta=0.8.$ The y-axis is the log-scale distance $d_s(\bar x_k, x^*).$ The first four lines in each testing case  are for the ring graph, and the last one is on a complete graph with equally weighted matrix, which aims to show the case of  $t\rightarrow \infty$. In \cref{figure:synthetic-n-32}(a), when fixing $\hat \beta$, it is shown that  that smaller $\hat \beta$ produces higher accuracy, which indicates the \cref{thm:convergence of alg 2}. We also see DRSGD performs almost the same with different $t\in\{1,10,\infty\}$. 
For the two deterministic algorithms DRDGD and DRGTA, we see that DRDGD can use larger $\hat\beta$ if more communication rounds $t$ is used in \cref{figure:synthetic-n-32}(b),(c). DRDGD cannot achieve exact convergence with the constant stepsize, while DRGTA successfully solves the problem using $t\in\{1, 10,\infty\}, \hat\beta=0.05.$  

Next, we report the numerical results on different networks and data size.

\cref{figure:synthetic-n-32-er} shows the results on the same data set as that of  \cref{figure:synthetic-n-32}. However, the network is an Erd\"{o}s-R\'{e}nyi model $\mathsf{ER}(n, p)$, which means the probability of each edge is included in the graph with probability $p$.   The Metropolis constant matrix is associated with the graph.  Since the $\mathsf{ER}(32, 0.3)$ is more well-connected than the ring graph, we see that the results for different $t\in\{1,10,\infty\}$ are almost the same except for DRDGD with $\hat\beta=0.05$. Moreover, the solutions accuracy and convergence rate of DRDGD and DRGTA are better than those shown in \cref{figure:synthetic-n-32}.  

\begin{figure*}[ht]
	\begin{center}
		\minipage{0.33\textwidth}
		\subfigure[DRSGD]{
			{\includegraphics[width= \linewidth]{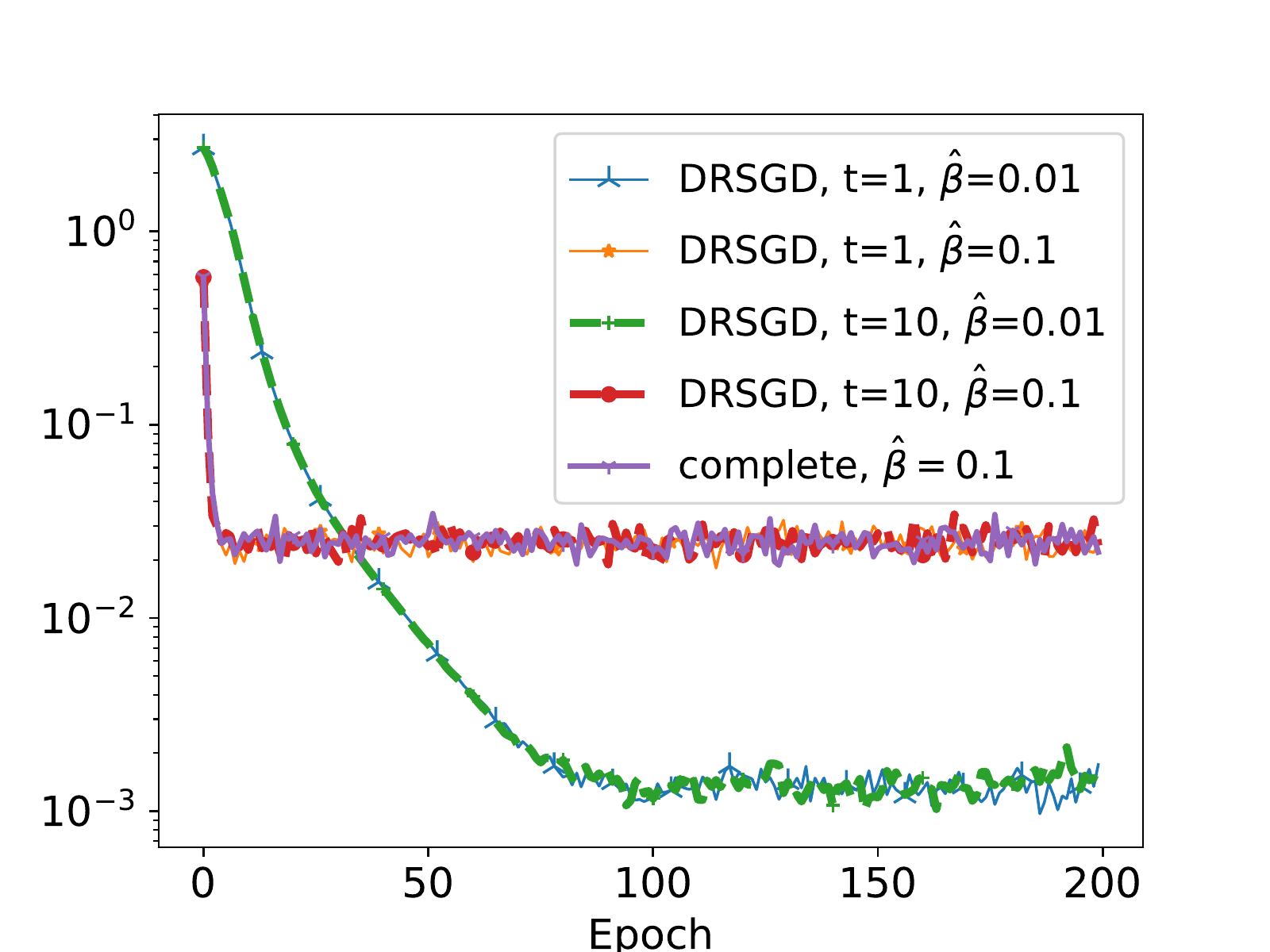}} }
		\endminipage
		\minipage{0.33\textwidth}
		\subfigure[DRDGD]{
			{\includegraphics[width=\linewidth]{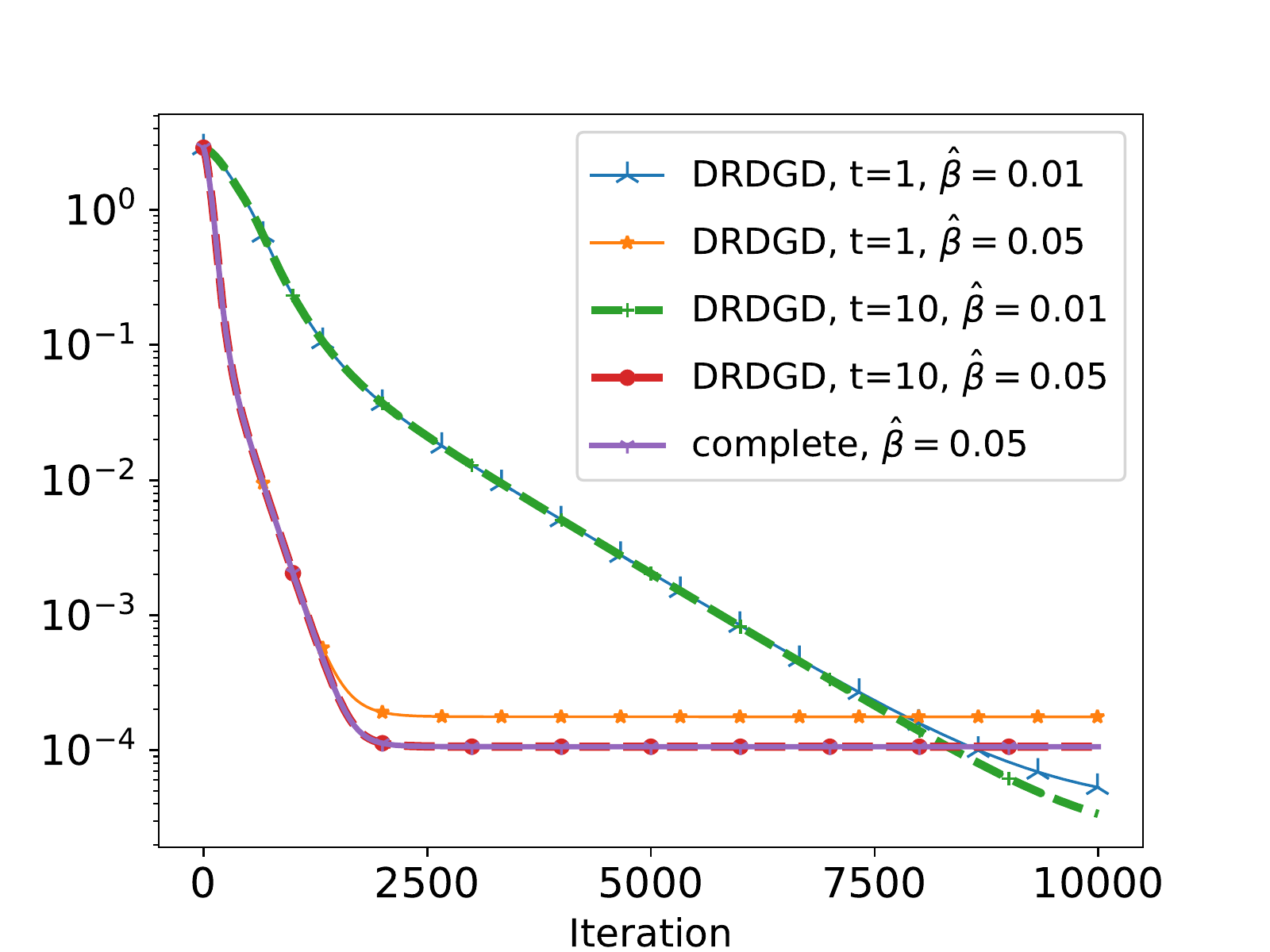}} }
		\endminipage
		\minipage{0.33\textwidth}
		\subfigure[DRGTA]{
			{\includegraphics[width= \linewidth]{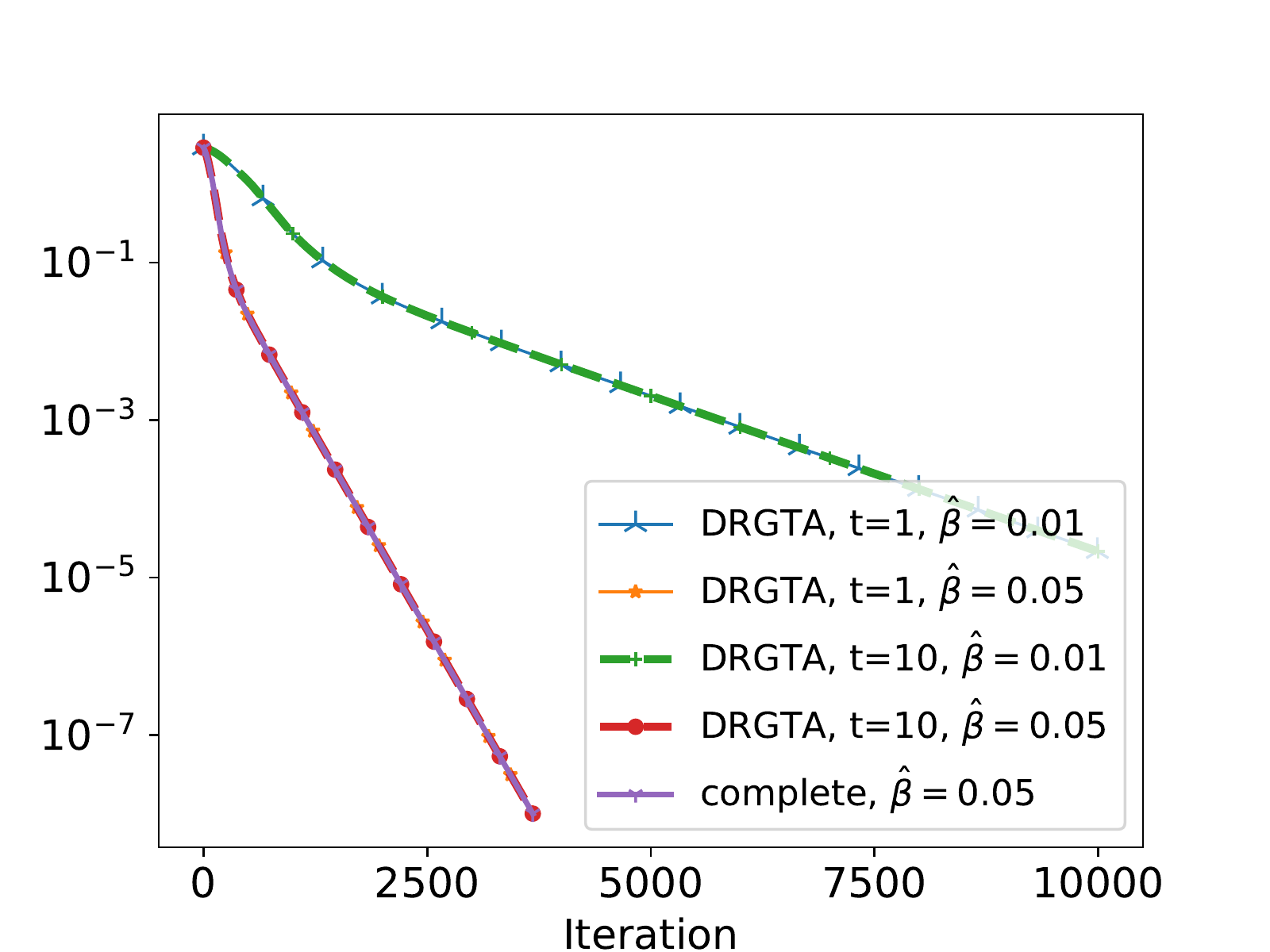}} }
		\endminipage
		\caption{Synthetic data, agents number $n=32,$ eigengap $\Delta=0.8$, Graph: $\mathsf{ER}(32, 0.3).$  y-axis: log-scale $d_s(\bar x_k, x^*)$. } 
		\label{figure:synthetic-n-32-er}
	\end{center}
\end{figure*}

In \cref{figure:synthetic-n-32-different start}, we show the results when the initial point does not satisfy $\X_0\in\N$. Specifically, we  randomly generate $x_{1,0}, \ldots, x_{n,0}$ on $\M,$ and the other settings are the same as \cref{figure:synthetic-n-32}. Surprisingly, we find that the proposed algorithms still converge. As suggested by \cite{markdahl2020high,chen2020consensus}, the consensus algorithm can achieve global consensus with random initialization when $r\leq \frac{2}{3}d-1$. The iteration in DRSGD and DRGTA is a perturbation of the consensus iteration.  It will be interesting to  study it further.  

\begin{figure*}[ht]
	\begin{center}
		\minipage{0.33\textwidth}
		\subfigure[DRSGD]{
			{\includegraphics[width= \linewidth]{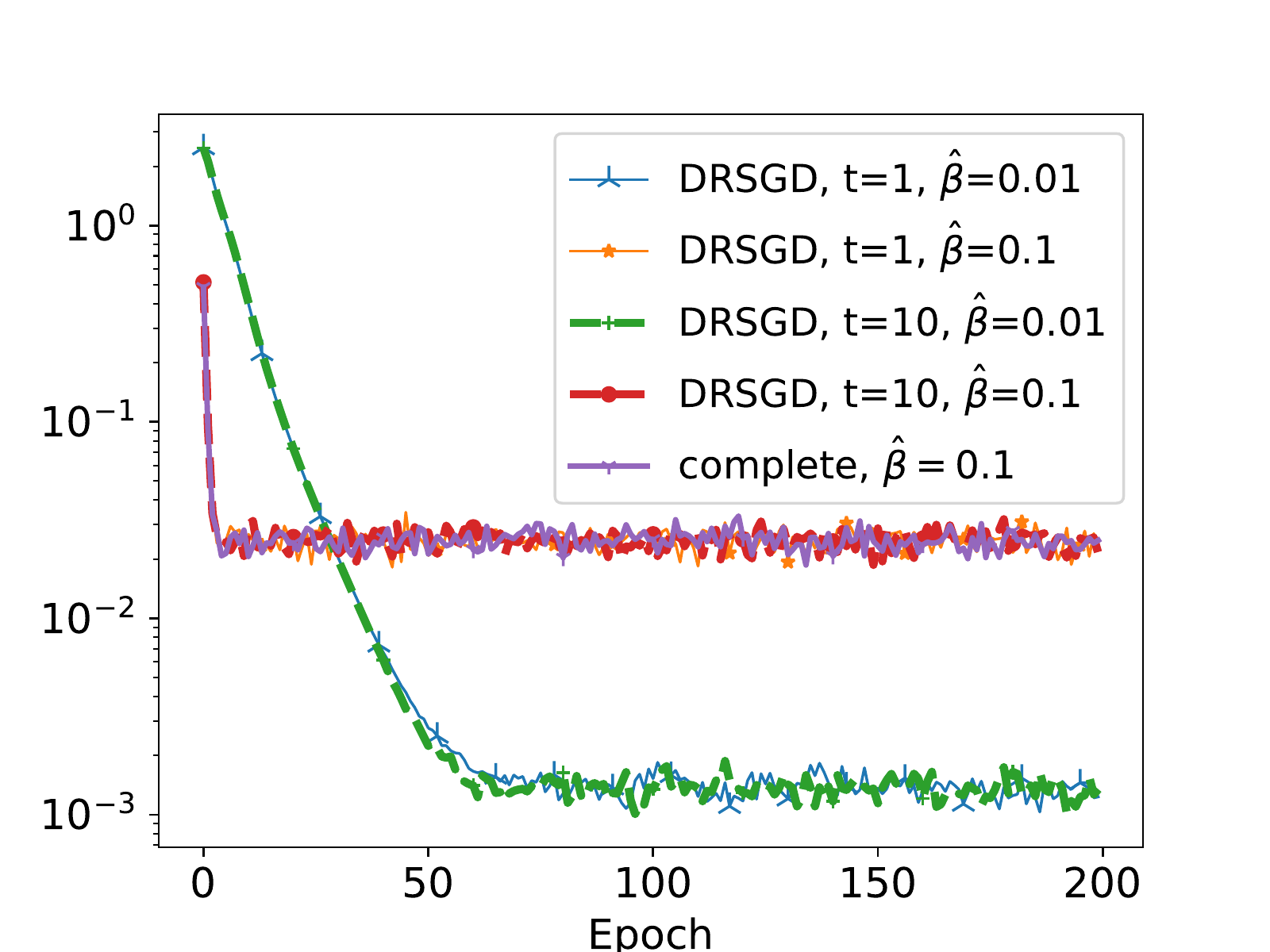}} }
		\endminipage
		\minipage{0.33\textwidth}
		\subfigure[DRDGD]{
			{\includegraphics[width=\linewidth]{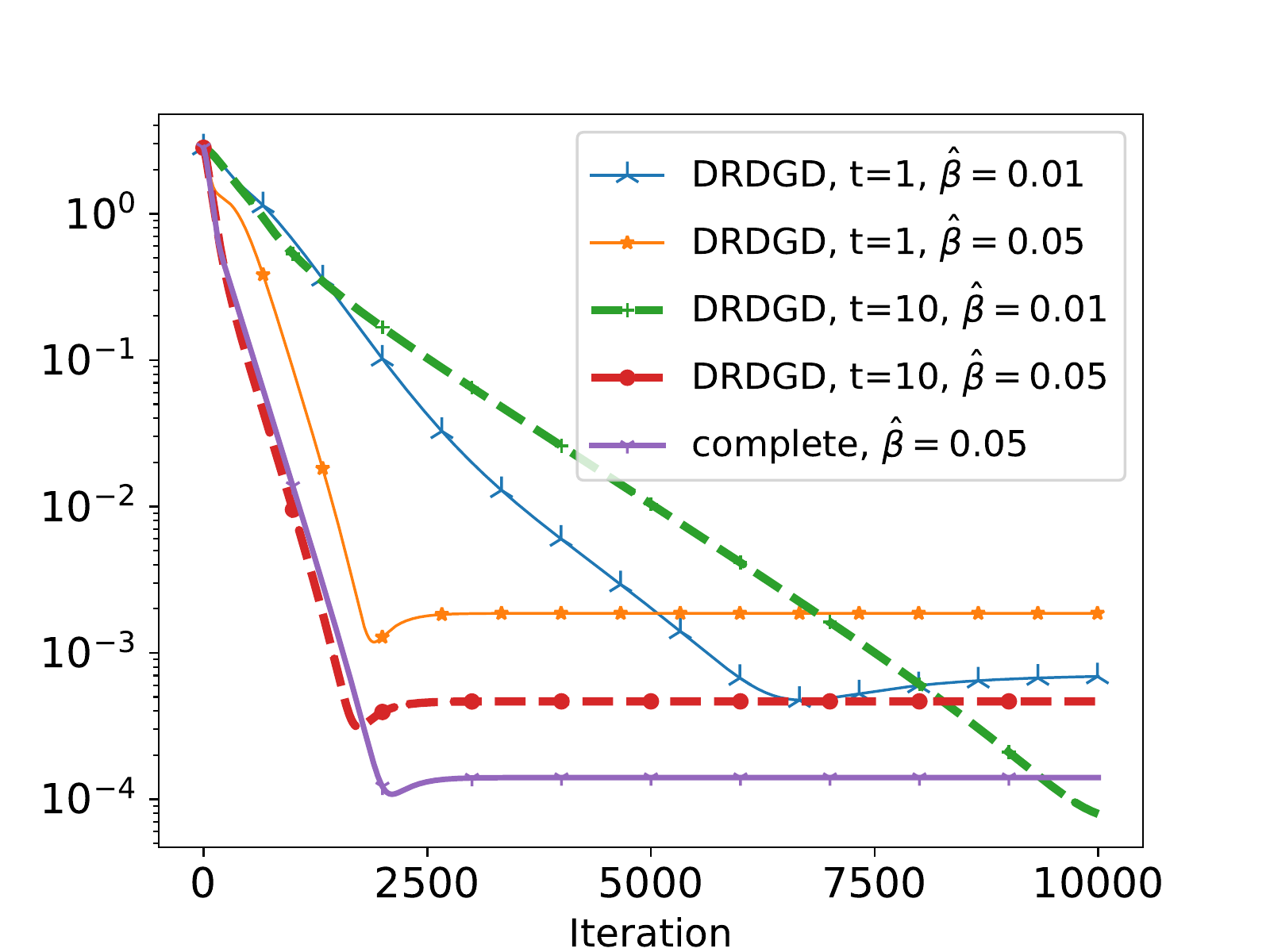}} }
		\endminipage
		\minipage{0.33\textwidth}
		\subfigure[DRGTA]{
			{\includegraphics[width= \linewidth]{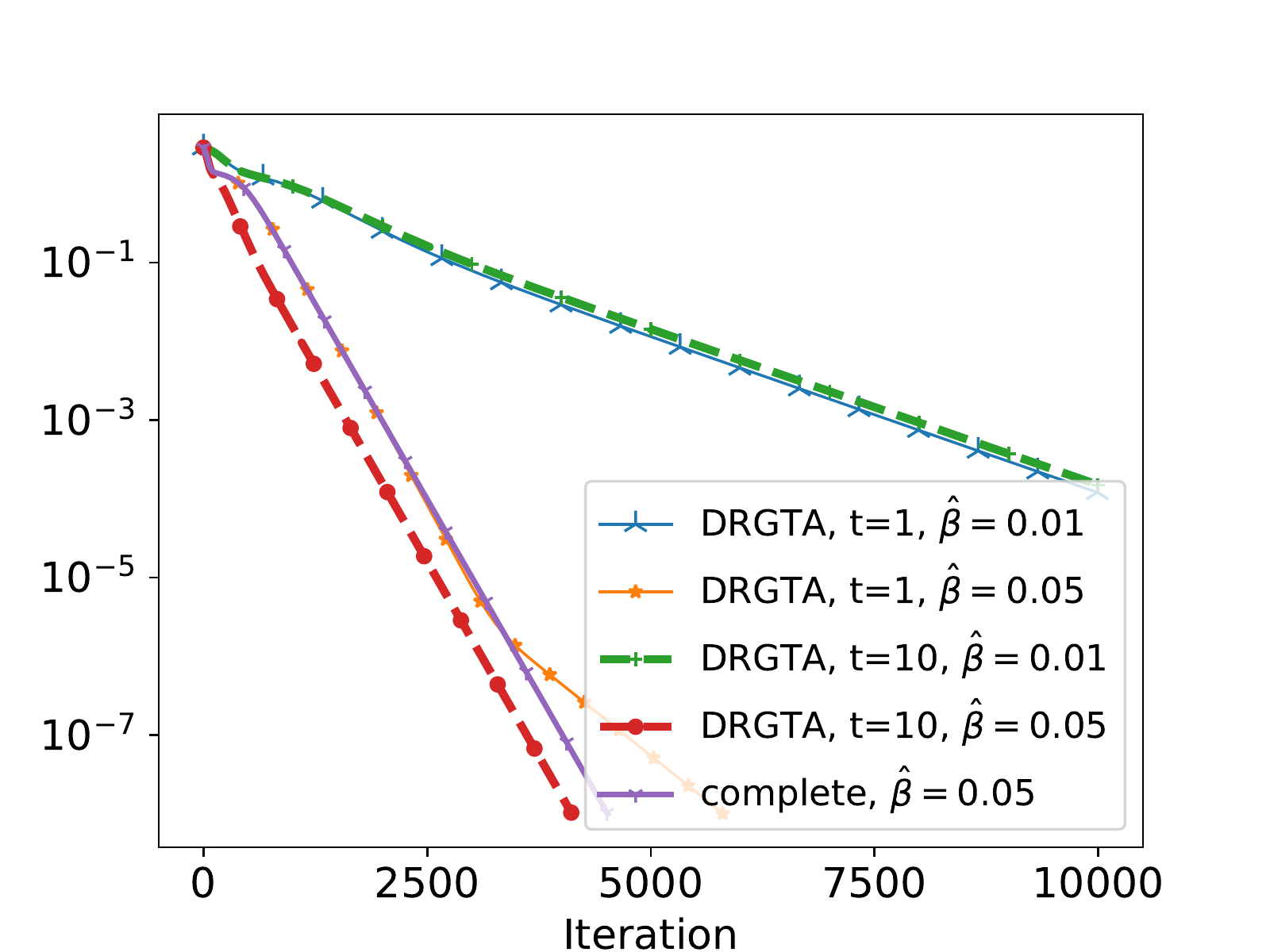}} }
		\endminipage
		\caption{Synthetic data, agents number $n=32,$ eigengap $\Delta=0.8$, Graph: Ring.  y-axis: log-scale $d_s(\bar x_k, x^*)$. } 
		\label{figure:synthetic-n-32-different start}
	\end{center}
\end{figure*}

\subsection{Real-world data}
We compare our algorithms with a recently proposed algorithm decentralized Sanger's algorithm (DSA) \cite{gang2021linearly}, which is a Euclidean-type algorithm.  To solve the eigenvector problem \eqref{prob:eigenvector}, DSA is shown to converge linearly to a neighborhood of the optimal solution. The computation  of DSA iteration is cheaper than DRDGD since there is no retraction step. For simplicity, we fix $t=1$ and $r=5$ in this section. 

We provide  some numerical results on the MNIST dataset\cite{lecun1998mnist}.
The graph is still the ring and $W$ is the Metropolis constant weight matrix. For MNIST, there are $60000$ samples and the dimension is given by $d=784.$ We normalize the data matrix by dividing $255$ such that the elements are in $[0,1].$ The data set is evenly partitioned  into $n$ subsets. The stepsizes of DRDGD and DRGTA are set to $\beta= \frac{\hat \beta}{60000}.$

The results for MNIST data set with $n=20, 40$ are shown in \cref{figure:MNIST}. We see that the convergence rate of DSA and DRDGD are almost the same and DRGTA with $\hat\beta=0.1$ can achieve the most accurate solution. When $n$ becomes larger, the convergence rate of all algorithms  is slower.  Although the computation of DSA is cheaper than DRDGD, we find that when $\hat\beta=0.5, n=20, $ DSA does not converge, which is not shown in the   \cref{figure:MNIST} (a).  This is probably because DSA is not a feasible method and needs carefully tuned stepsize. 
\begin{figure*}[ht]
	\begin{center}
		\minipage{0.45\textwidth}
		\subfigure[MNIST, $n=20$, ring graph]{
			{\includegraphics[width= \linewidth]{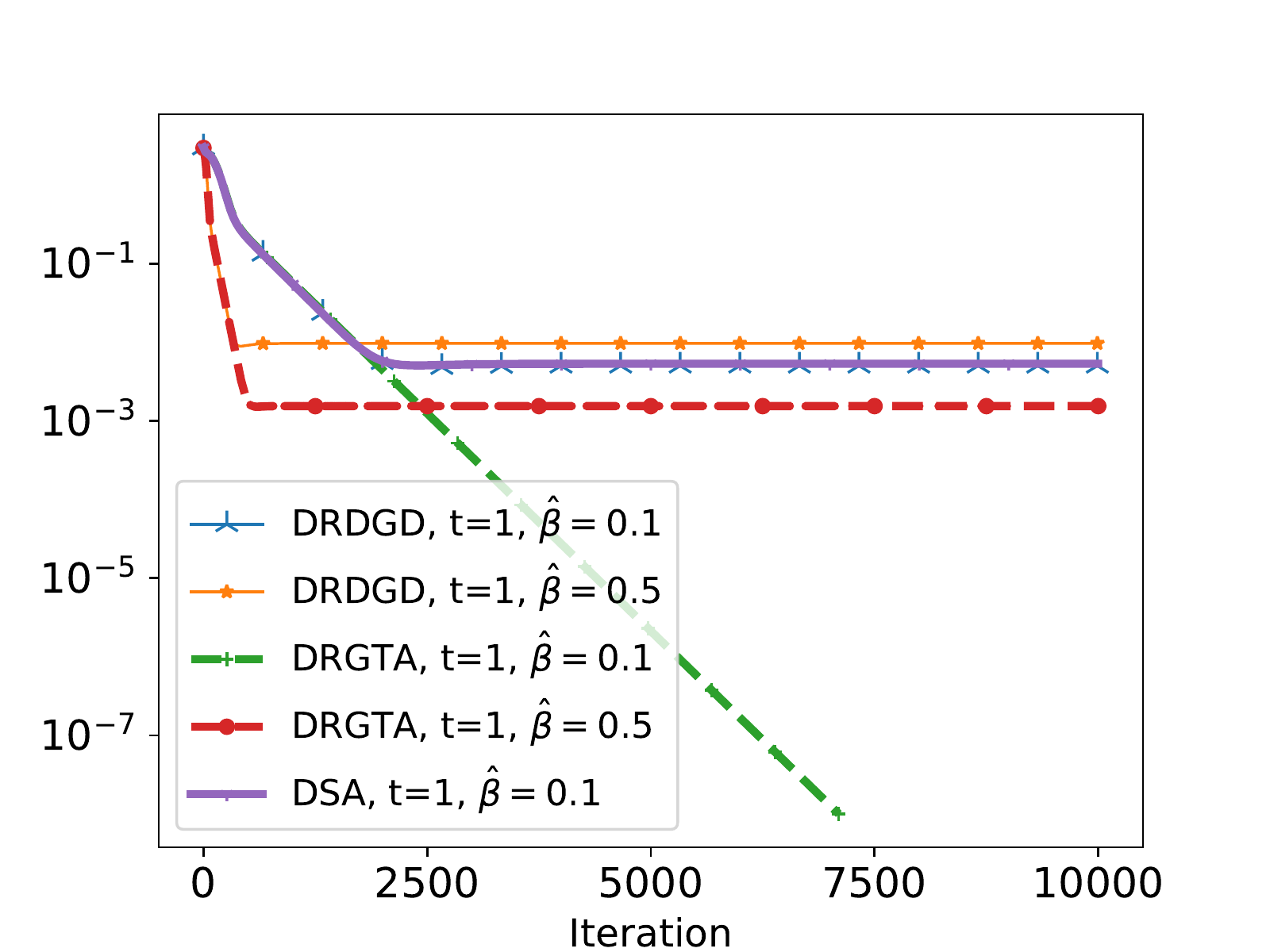}} }
		\endminipage
		\minipage{0.45\textwidth}
		\subfigure[MNIST, $n=40$, ring graph]{
			{\includegraphics[width=\linewidth]{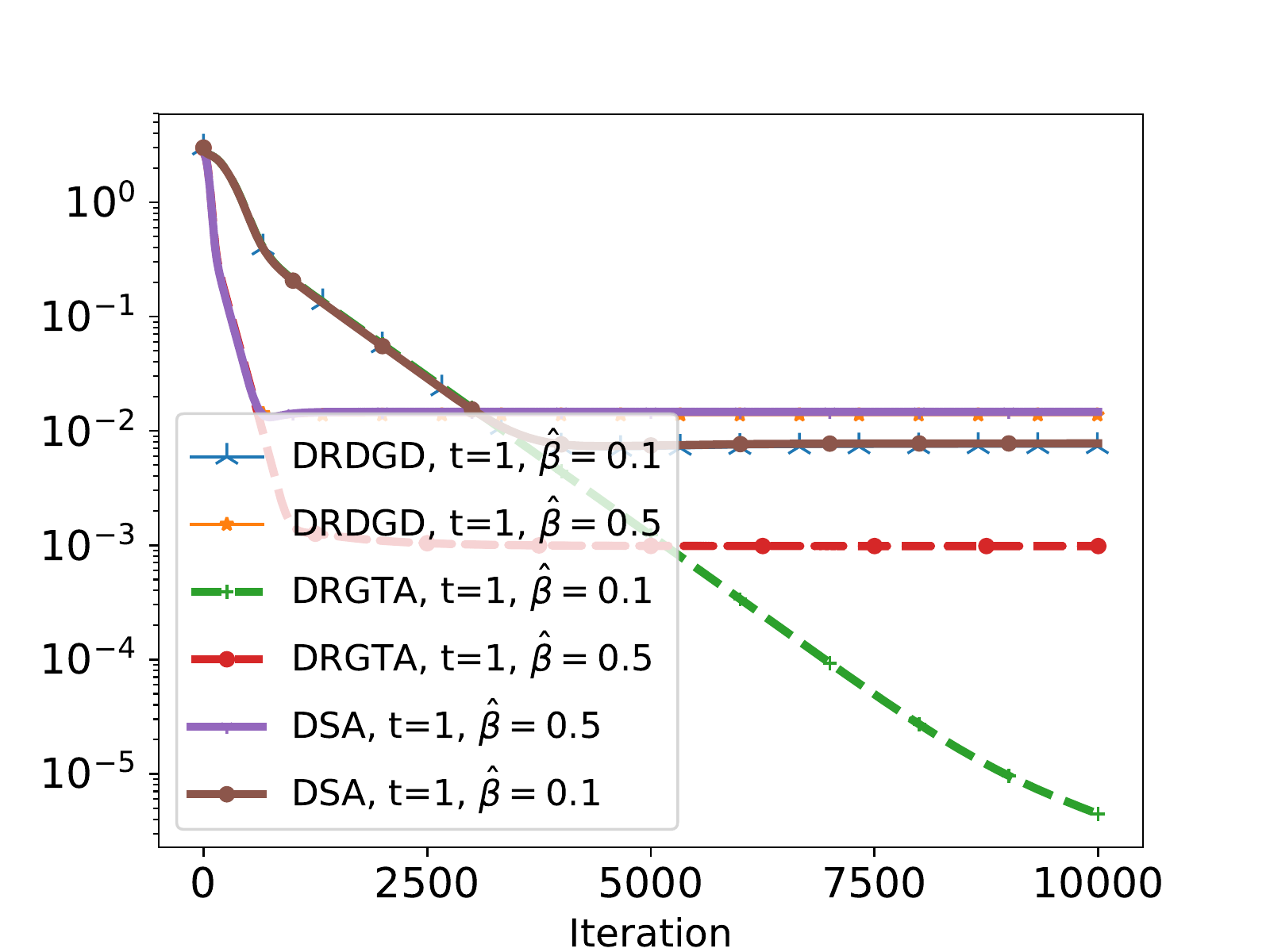}} }
		\endminipage
		\caption{Numerical results of DRDGD, DRGTA, DSA on MNIST data set.  y-axis: log-scale $d_s(\bar x_k, x^*)$. } 
		\label{figure:MNIST}
	\end{center}
\end{figure*}

Finally, we demonstrate the linear speedup of DRSGD for different $n$.  The experiments are evaluated in a HPC cluster, where each computation node  is associated with an Intel Xeon E5-2670 v2 CPU.  The  computation nodes are connected by FDR10 Infiniband.  We use $10$ CPU cores each computation node in the HPC cluster. And we treat one CPU core  as one network node in our problem.  The codes are implemented in python with mpi4py. 

We set the maximum epoch as $300$ in all experiments. The stepsize is set to $\beta = \frac{\sqrt{n}}{10000\sqrt{300}}\hat \beta,$ where $\hat \beta$ is tuned for the best performance.  The results in \cref{figure:MNIST-stochastic} are $\log d_s(\bar{x}_k, x^*)$ v.s. epoch and $\log d_s(\bar{x}_k, x^*)$ v.s. CPU time, respectively.   As we see in \cref{figure:MNIST-stochastic}(a), the solutions accuracy of $n=16, 32, 60$ are almost the same, while the CPU time in \cref{figure:MNIST-stochastic}(b) can be accelerated by   nearly linear ratio. 

\begin{figure*}[ht]
	\begin{center}
		\minipage{0.45\textwidth}
		\subfigure[iteration]{
			{\includegraphics[width= \linewidth]{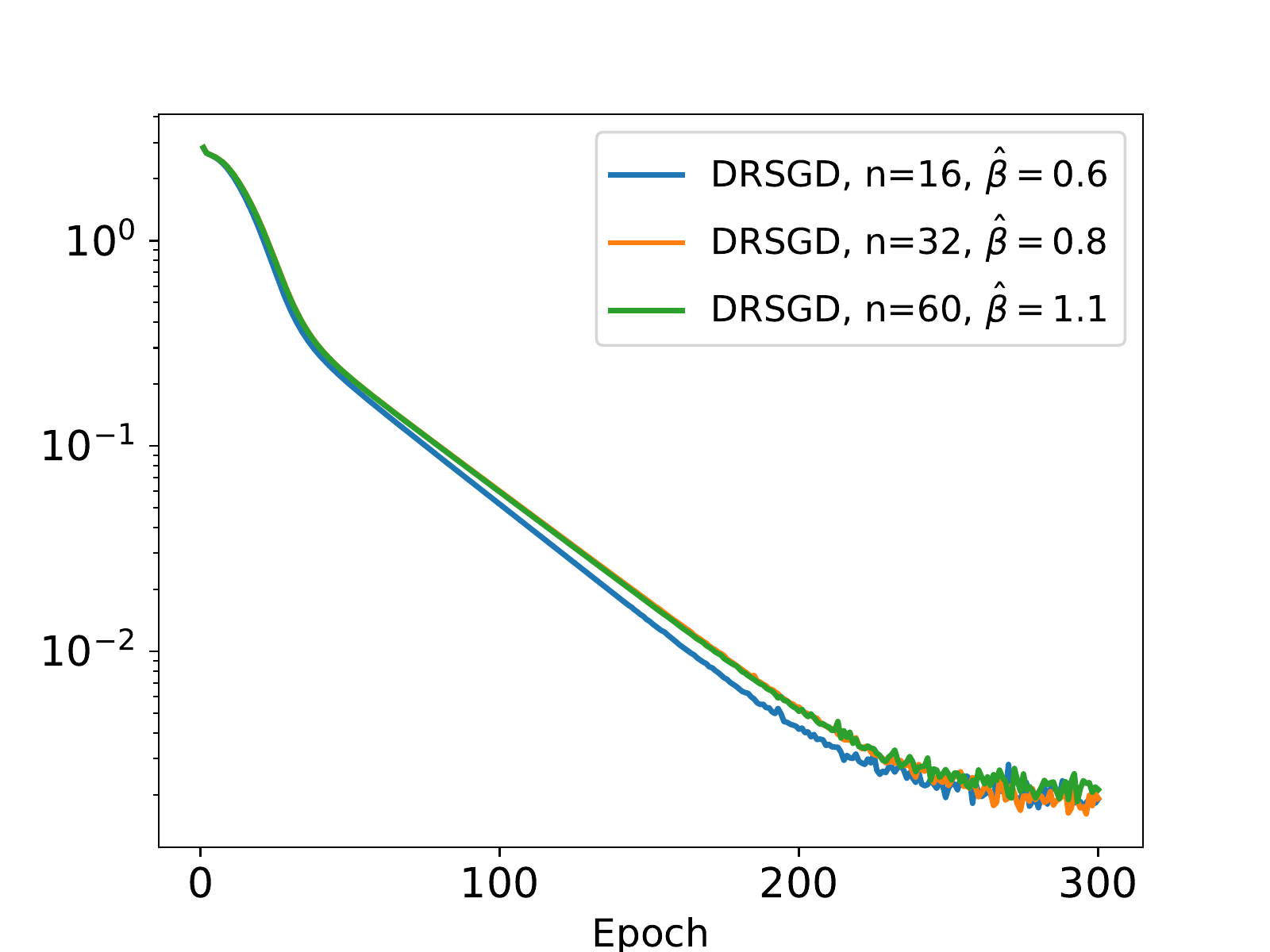}} }
		\endminipage
		\minipage{0.45\textwidth}
		\subfigure[time ]{
			{\includegraphics[width=\linewidth]{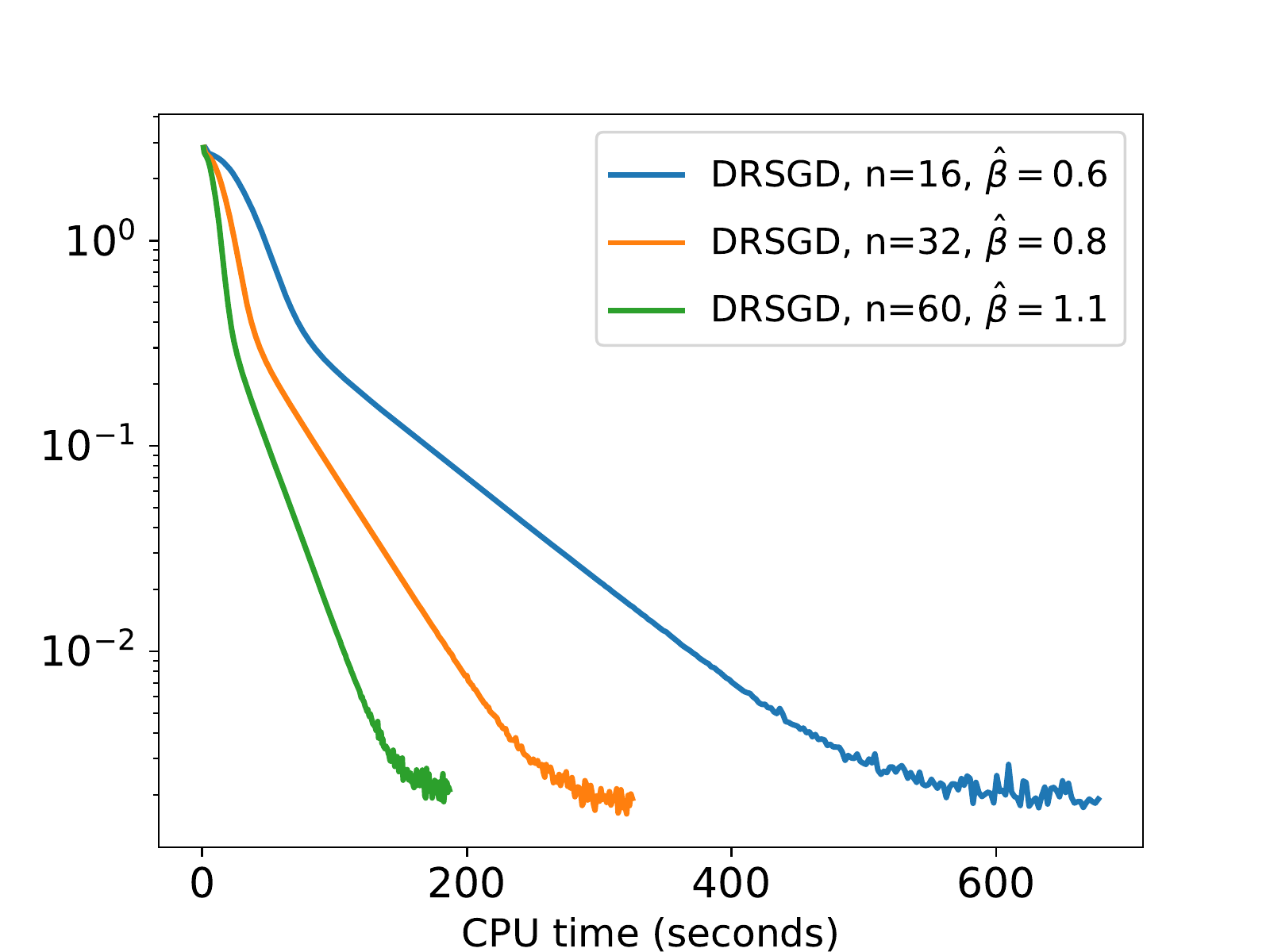}} }
		\endminipage
		\caption{Comparison results of different number of nodes on MNIST. Ring graph associated with Metropolis constant weight matrix, $t=1,$ $\beta = \frac{\sqrt{n}}{10000\sqrt{300}}\hat \beta.$ y-axis: log-scale $d_s(\bar x_k, x^*)$.  } 
		\label{figure:MNIST-stochastic}
	\end{center}
\end{figure*}

\section{Conclusions}
We discuss the decentralized optimization problem over Stiefel manifold and  propose the two decentralized Riemannian gradient method and establish their convergence rate. 
Future topics could be cast into the following several folds: Firstly, for the eigenvector problem \eqref{prob:eigenvector}, it will be interesting to establish the linear convergence of DRGTA.  Secondly, the analysis is based on the local convergence of Riemannian consensus, which results in multi-step consensus. It would be interesting to design  algorithms based on Euclidean consensus. 

\bibliography{manifold}
\bibliographystyle{plain}


\newpage

\appendix
\onecolumn
\section{About the polar retraction}\label{sec:append_retraction}
Given the polar decomposition of $x+\xi=QH,$ where $Q\in\R^{d\times r}$ is orthogonal and $H\in\R^{r\times r}$ is positive definite. 
The polar retraction is the polar factor 
\begin{align}\label{eq:polar}
    \Retr_{x}(\xi)=Q = (x+\xi)(I_r + \xi^\top \xi)^{-1/2},
\end{align}
which is also the orthogonal projection of $x+\xi$ onto $\St(d,r)$.   The computation complexity is $\mathcal{O}(dr^2).$
 \cite[Append. E]{Liu-So-Wu-2018}  showed that if $\normfro{\xi}\leq 1$ then $M=1$   for polar retraction.  The boundedness of $\xi$ can be verified in our convergence analysis. Therefore, we have $M=1$ in this paper.

\section{More details on linear rate of consensus}\label{sec:append_consensus}

The following results were provided in \cite{chen2020consensus}.

 If there exists an integer $t\geq 0$ such that 
\be\label{ineq:W mult step t}
\max_{i\in[n]}  \normfro{\sum_{j=1}^n (W_{ij}^t - 1/n )(x_j-\bar x) }\leq\max_{i\in[n]}  \sum_{j=1}^n |W_{ij}^t - 1/n |\normfroinf{\X -\bar\X} \leq \frac{1}{2}\normfroinf{\X -\bar\X},
\ee
then it suffices to show the sequence  $\{\X_k\}$ of DRCS  satisfying  $\X_k\in\N$ with $t\geq \lceil\log_{\sigma_2}(\frac{1}{2\sqrt{n}})\rceil$ steps of communication.

 Denote the smallest eigenvalue of $W^t$ by $\lambda_n(W^t),$ the constant $L_t$   is given by 
 \be\label{def:lipschitz consensus function} L_t = 1-\lambda_n(W^t) .\ee
 It is   the Lipschitz constant of $\nabla\varphi^t(\X)$. Since $L_t\in(0,2],$ if $\lambda_n(W^t)$ is unknown, one can use $L_t=2.$
  Define the second largest eigenvalue of $W^t$ by $\lambda_2(W^t)$    and \[ \mu_t=1-\lambda_2(W^t).\]
  
  The formal statement of \cref{thm:linear_rate_consensus_informal} is given as follows. 
  \begin{fact}\label{thm:linear_rate_consensus}\cite{chen2020consensus}
  Under \cref{assump:doubly-stochastic},
 let the stepsize  $\alpha$ satisfy $0<\alpha\leq \bar\alpha:=\min\{\nu \frac{\Phi}{ L_t}, 1,\frac{1}{M}\}$ and $t\geq \lceil\log_{\sigma_2}(\frac{1}{2\sqrt{n}})\rceil$, where $\nu\in[0,1],$ $\Phi=2-\delta_2^2$ and $M$ is given in \cref{lem:nonexpansive_bound_retraction}.
	The sequence $\{\X_k\}$ of \eqref{consensus_rga} achieves consensus linearly if the initialization satisfies 
	$\X_0\in \N$ defined by \eqref{delta_1_and_delta_2}. That is, we have  $\X_k\in\N$ for all $k\geq 0$ and 
	\begin{align}\label{linear rate of consensus}
	\bad
	\normfro{\X_{k+1} - \bar{\X}_{k+1}}&\leq  \normfro{\X_k - \alpha\grad\varphi^t(\X_k)  - \bar\X_k} \\
	&\leq \sqrt{1-    2(1-\nu) \alpha  \gamma_{t}}\normfro{\X_{k} - \bar{\X}_{k}},
	\ead
	\end{align}
	where $\gamma_t = (1- 4r\delta_{1}^2) (1- \frac{\delta_{2}^2}{2}) \mu_t\geq \frac{\mu_t}{2}\geq \frac{1-\sigma_2^t}{2}$. 
\end{fact}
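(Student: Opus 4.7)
The plan is to prove the three bounds \eqref{convergence_Y}--\eqref{convergence_grad} simultaneously by coupling three ingredients: a descent estimate for $f(\bar x_k)$, a contractive recursion for the consensus error $\tfrac{1}{n}\normfro{\X_k-\bar\X_k}^2$, and a contractive recursion for the gradient-tracking error $\tfrac{1}{n}\normfro{\Y_k-\hat\bG_k}^2$, where $\hat\bG_k=\mathbf{1}_n\otimes \hat g_k$ and $\hat g_k=\tfrac{1}{n}\sum_i \grad f_i(x_{i,k})$. By \cref{lem:uniform bound y} all iterates lie in $\N$ and $\normfro{y_{i,k}}\le L_G+2D$, so throughout the proof one operates in the region where \cref{thm:linear_rate_consensus} applies and where the IAM map $\ps$ is smooth.

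\textbf{Descent on $f(\bar x_k)$.} First I would average the DRGTA update across agents. Double stochasticity gives $\tfrac{1}{n}\sum_{i,j}W^t_{ij}x_{j,k}=\hat x_k$, so combining the second-order retraction bound \eqref{ineq:ret_second-order} with the tangent-space projection identity yields
\begin{equation*}
\hat x_{k+1}-\hat x_k = -\beta \hat v_k + \Delta_k,\qquad \normfro{\Delta_k} = \mathcal{O}\!\bigl(\beta^2\tfrac{1}{n}\normfro{\Y_k}^2+\tfrac{1}{n}\normfro{\X_k-\bar\X_k}^2\bigr).
\end{equation*}
Passing this through $\bar x_k=\ps(\hat x_k)$ transfers the estimate to $\bar x_{k+1}-\bar x_k$. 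I then apply the Lipschitz-type inequality \eqref{ineq:lipschitz} along this step, invoke the identity $\hat y_k=\hat g_k$ (preserved by step~6 of \cref{alg:DRG_GT}), and use $\normfro{\grad f(\bar x_k)-\hat g_k}^2\le \tfrac{L_G^2}{n}\normfro{\X_k-\bar\X_k}^2$, which follows from \eqref{ineq:lips_riemanniangrad}. The target form is a descent of the shape
\begin{equation*}
f(\bar x_{k+1}) \le f(\bar x_k) - c_1 \beta\normfro{\hat y_k}^2 - c_2\beta\normfro{\grad f(\bar x_k)}^2 + c_3\beta L_G^2\cdot\tfrac{1}{n}\normfro{\X_k-\bar\X_k}^2 + \mathcal{O}(\beta^2)\text{ higher-order terms}.
\end{equation*}

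\textbf{Consensus and tracking recursions.} For the $\X$-sequence, I adapt \cref{thm:linear_rate_consensus_informal} to the perturbed update $x_{i,k+1}=\Retr_{x_{i,k}}\!\bigl(\alpha\p_{\T_{x_{i,k}}\M}\sum_j W^t_{ij}x_{j,k}-\beta v_{i,k}\bigr)$, using the non-expansiveness \eqref{ineq:ret_nonexpansive} of the polar retraction to peel off the gradient perturbation, obtaining $\normfro{\X_{k+1}-\bar\X_{k+1}}^2\le \rho_t\normfro{\X_k-\bar\X_k}^2+\beta^2\G_0\normfro{\Y_k}^2$. Telescoping via Young's inequality produces the bound $\tfrac{1}{n}\sum_{k=0}^K\normfro{\X_k-\bar\X_k}^2\le \tilde C_0\beta^2\sum_k\tfrac{1}{n}\normfro{\Y_k}^2+\tilde C_1$, explaining the appearance of $\tilde C_0,\tilde C_1$. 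For the $\Y$-sequence, the average $\hat y_k=\hat g_k$ is invariant under step~6, so the mean-zero deviation $\Y_k-\hat\bG_k$ contracts at rate $\sigma_2^t$ while absorbing a perturbation bounded by $L_G\normfro{\X_{k+1}-\X_k}$; in turn $\normfro{\X_{k+1}-\X_k}$ is expressible through the algorithmic step as a combination of $\normfro{\X_k-\bar\X_k}$ and $\beta\normfro{\Y_k}$. Unrolling this yields $\tilde C_2,\tilde C_3,\tilde C_4$ in the statement.

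\textbf{Closing the argument.} Summing the descent inequality from $k=0$ to $K$, the consensus-error contribution on its right-hand side is absorbed by the $\X$-recursion, whose own right-hand side is controlled by $\beta^2\sum_k\normfro{\Y_k}^2$. The stepsize condition $\beta\le 1/(8L_G)$ is used to absorb the $\mathcal{O}(\beta^2)$ higher-order descent terms into the $-c_1\beta\normfro{\hat y_k}^2$ slack, while the refined restriction $\beta\le 1/\bigl(4L_G(2\G_3+(8\tilde C_0+\tfrac{1}{2}\tilde C_2)\alpha\delta_1)\bigr)$ is tuned exactly so that the consensus/tracking feedback loop is also absorbed into that slack, producing \eqref{convergence_Y}. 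Substituting \eqref{convergence_Y} back into the $\X$-recursion yields \eqref{convergence_X}, and combining the two through $\normfro{\grad f(\bar x_k)}^2\le 2\normfro{\hat y_k}^2+\tfrac{2L_G^2}{n}\normfro{\X_k-\bar\X_k}^2$ (as already noted in the excerpt) gives \eqref{convergence_grad}. The main obstacle is the bookkeeping for the descent step: unlike Euclidean gradient tracking, where the agent-wise average coincides with a literal gradient step, here $\bar x_{k+1}$ passes through both per-agent retractions and the IAM projection, so one must simultaneously track the second-order retraction error from \eqref{ineq:ret_second-order}, the discrepancy between $\hat v_k$ and $\hat y_k$ caused by the tangent-space projection in step~4, and the IAM perturbation $\ps(\hat x_k)-\hat x_k$, while keeping the final constants compatible with the stated $\G_3,\G_4,\tilde C_i$ and with the negative slack available from the descent.
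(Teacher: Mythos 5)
There is a fundamental mismatch here: the statement you were asked to prove is \cref{thm:linear_rate_consensus}, the Q-linear contraction of the pure consensus iteration \eqref{consensus_rga} (i.e., $\normfro{\X_{k+1}-\bar\X_{k+1}}\le\sqrt{1-2(1-\nu)\alpha\gamma_t}\,\normfro{\X_k-\bar\X_k}$ together with the invariance of the region $\N$), but your proposal is a proof sketch of an entirely different result, namely \cref{thm:theorem-convergence-grad-tracking} with its bounds \eqref{convergence_Y}--\eqref{convergence_grad} for the gradient-tracking algorithm DRGTA. Nothing in your argument addresses what \cref{thm:linear_rate_consensus} actually asserts: you never analyze the unperturbed update $x_{i,k+1}=\Retr_{x_{i,k}}(\alpha\p_{\T_{x_{i,k}}\M}\sum_j W^t_{ij}x_{j,k})$, never establish that $\X_k\in\N$ is preserved by that map, and never derive the contraction factor $\sqrt{1-2(1-\nu)\alpha\gamma_t}$ with $\gamma_t=(1-4r\delta_1^2)(1-\delta_2^2/2)\mu_t$. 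Such a proof would hinge on a restricted secant (quadratic-growth) inequality for $\varphi^t$ relative to the consensus set $\cX^*$ inside $\N$, combined with the non-expansiveness \eqref{ineq:ret_nonexpansive} of the polar retraction toward $\bar x_k\in\St(d,r)$ and the descent-lemma control of $\alpha\le\nu\Phi/L_t$; none of these ingredients appears in your write-up. Note also that the paper itself does not prove this statement: it is imported verbatim as a Fact from \cite{chen2020consensus}.

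Worse, your argument is circular with respect to the target: in your ``Consensus and tracking recursions'' paragraph you explicitly invoke \cref{thm:linear_rate_consensus_informal} (the informal version of the very Fact to be proved) to obtain the recursion $\normfro{\X_{k+1}-\bar\X_{k+1}}\le\rho_t\normfro{\X_k-\bar\X_k}+\beta\normfro{\Y_k}$, and you also lean on \cref{lem:uniform bound y}, whose proof in turn rests on that same Fact. So even read charitably, the proposal assumes the statement it was meant to establish. As a sketch of the proof of \cref{thm:theorem-convergence-grad-tracking} your outline is broadly consistent with the paper's actual argument (descent lemma plus the two contractive recursions combined via \cref{lem:gt-recursion-lem}), but that is not the statement under review.
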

If $\nu=1/2$, we have $\alpha\leq \bar\alpha:=\min\{\frac{\Phi}{2L_t},1,1/M\}$ and 
\[ \rho_t=\sqrt{1-\gamma_t\alpha }. \]
Recall that $M$ is the constant given in \cref{lem:nonexpansive_bound_retraction}. We also have $M=\mathcal{O}(1)$ which is discussed in \cref{sec:append_retraction}.  
If $\alpha=1$ is admissible, then the rate is $\rho_t=\sqrt{\frac{1+\sigma_2^t}{2}}$ which is worse that the Euclidean rate $\sigma_2^t.$ Moreover, it was shown in \cite{chen2020consensus} in a smaller region, i.e., $\varphi^t(\X)=\mathcal{O}(\sigma_2^t)$ and $\normfro{\X-\bar\X}^2=\mathcal{O}(1), $ it follows asymptotically $\rho_t=\sigma_2^t$ with $\alpha=1$. For simplicity, we will only discuss the convergence of our proposed algorithms using \eqref{linear rate of consensus} with $\nu=1/2$. 
Note that this may imply $\bar\alpha<1$, but we find that $\alpha=1$ always works for our proposed algorithms.

\section{Proofs for \cref{sec:preliminary}}\label{sec:append-preliminary}
Denote $\p_{N_{x}\M}$ as the orthogonal projection onto the normal space $N_x\M$.  One can rewrite the projection  $	\p_{\T_x\M}(y - x), \forall y\in\St(d,r)$ \cite{chen2020consensus} as follows
	\be\label{property of projection onto tangent}
	\bad
  &\quad  \p_{\T_x\M}(y - x) = y -  x - \p_{N_{x}\M}(y-x) \\
& = y- x + \half x (x-y)^\top (x-y).
	\ead  \tag{P2}
  \ee
 This   implies that
\[  \p_{\T_x\M}(y - x) = y-x + \mathcal{O}(\normfro{y-x}^2). \] 
The relationship \eqref{property of projection onto tangent} helps us to prove \cref{lem:lipschitz}. 
\begin{proof}[\textbf{Proof of \cref{lem:lipschitz}}]
 
	Firstly, since $\nabla f(x)$ is $L-$Lipschitz in Euclidean space, one has
	\be\label{ineq:lip_smooth_E}
	\left|f(y) - \left[ f(x) + \inp{\nabla f(x)}{y-x} \right]  \right|\leq \frac{L}{2}\normfro{y-x}^2.
	\ee	
Since $\grad f(x)=\p_{\T_x\M}\nabla f(x)$, we have
\begin{align}
   \inp{\grad f(x)}{y-x}  
	&=   \inp{\nabla f(x)}{\p_{\T_{x}\M}(y-x)}   \notag\\
	&\stackrel{\eqref{property of projection onto tangent}}{=} \inp{\nabla f(x)}{y-x} + \inp{ \nabla f(x)}{\half x (y-x)^\top (y-x) }. \notag 
	\end{align}	
	Using
	 \[  \inp{ \nabla f(x)}{\half x (y-x)^\top (y-x) }\leq  \normtwo{\nabla f(x)}\cdot \normtwo{x}\cdot  \half\normfro{ x-y }^2\leq  \half \normtwo{\nabla f(x)} \cdot  \normtwo{ x-y }^2\] 
	 implies
	 \be\label{lips-proof-1}
	 \left| \inp{\grad f(x)}{y-x}  -  \inp{\nabla f(x)}{y-x} \right| \leq \half\max_{x\in\St(d,r)}\normtwo{\nabla f(x)}\cdot\normfro{y-x}^2,
	 \ee
	where $\normtwo{\nabla f(x)}$ represents  the operator norm of  $\nabla f(x)$. 
	 Since $\St(d,r)$ is a compact set and $\nabla f(x)$ is continuous, we denote $L_n =\max_{x\in\St(d,r)}\normtwo{\nabla f(x)}$. 
	Let $L_g =  L_n + L$. Combining \eqref{ineq:lip_smooth_E} with \eqref{lips-proof-1} yields
	\be\label{ineq:lip_smooth_R}
	\left|f(y) - \left[ f(x) + \inp{\grad f(x)}{y-x} \right]  \right|\leq \frac{L_g}{2}\normfro{y-x}^2.
	\ee	
   Secondly, using $\grad f(x)= \nabla f(x) - \p_{N_x\M}\nabla f(x)$ and $\grad f(y)= \nabla f(y) - \p_{N_y\M}\nabla f(y)$ implies
   \begin{align}
       &\quad \normfro{\grad f(x) - \grad f(y)} \notag\\
       &\leq  \normfro{ \nabla f(x) -  \nabla f(y)} + \normfro{\p_{N_x\M}\nabla f(y) - \p_{N_y\M}\nabla f(y)} \notag\\
       & = \normfro{ \nabla f(x) -  \nabla f(y)} +\half \normfro{ x (x^\top \nabla f(y) + \nabla f(y)^\top x) - y(y^\top \nabla f(y) + \nabla f(y)^\top y)}\notag \\
       &\leq \normfro{\nabla f(x) - \nabla f(y)} + 2L_n\normfro{x-y}\label{ineq:lips-proof-2}\\
       &\leq (L+2L_n)\normfro{x-y}.\notag
   \end{align}
   In \eqref{ineq:lips-proof-2} we used
   \begin{align}
         &\quad\normfro{ x (x^\top \nabla f(y) + \nabla f(y)^\top x) - y(y^\top \nabla f(y) + \nabla f(y)^\top y)}\notag\\
        &\leq \normfro{ x ((x-y)^\top \nabla f(y) + \nabla f(y)^\top (x-y))} + \normfro{  (x- y)(y^\top \nabla f(y) + \nabla f(y)^\top y)}\notag\\
        &\leq 4L_n\normfro{x-y}.\notag
   \end{align}
 
	  The proof is completed. 
\end{proof}
\subsection{Comparison on different Lipschitz-type inequalities}\label{subsection:append-comparison lipschitz ineq}
Using Taylor's Theorem\cite[Lemma 7.4.7]{Absil2009}, $L_g'$ corresponds to the leading eigenvalue of Riemannian Hessian.  According to \cite{absil2013extrinsic}, it follows for any $\eta\in\T_x\M$ that
\be\label{Riemannian hessian}
\bad
\Hess f(x)[\eta] &=  \p_{T_{x\M}}\left( D\grad h(x)[\eta] \right)\\
& = \p_{T_{x}\M}\nabla^2 f(x)  \eta  -\eta x^\top \p_{N_{x}} \nabla f(x) - x\half \left(\eta^\top\p_{N_{x}} \nabla f(x_i) + (\p_{N_{x}} \nabla f(x)) ^\top \eta \right),
\ead
\ee
where $\p_{N_{x}}$ is the orthogonal projection onto the normal space $N_x\M$.  Since   $x\half \left(\eta^\top\p_{N_{x}} \nabla f(x_i) + (\p_{N_{x}} \nabla f(x)) ^\top \eta \right) \in  N_x\M$, we have
\be\label{Riemannian hessian-1}
\bad
\inp{\eta}{ \Hess f(x)[\eta]}  
=  \inp{\eta}{  \nabla^2 f(x)  \eta }  - \inp{\eta}{\eta x^\top \p_{N_{x}} \nabla f(x) } = \inp{\eta}{  \nabla^2 f(x)  \eta }  - \inp{\eta}{\eta \half (x^\top \nabla f(x) + \nabla f(x)^\top x)}  ,
\ead
\ee
where we use $\p_{T_{x}\M}  \nabla^2 f(x)  \eta =  \nabla^2 f(x)  \eta - \p_{N_{x}}  \nabla^2 f(x)  \eta$. Therefore, we get 
\be\label{ineq:L-g}
L_g^\prime \leq \lambda_{\max} (\nabla^2 f(x)) + \max_{x\in\St(d,r)}\normtwo{ \nabla f(x)}= L+ L_n.
\ee
 
 The restricted inequality proposed in \cite{boumal2019global}   is related to the pull back function $g(\xi):=f(\Retr_{x}(\xi))$, whose Lipschitz constant $\tilde{L}_g$ relies on the retraction.
Specifically,   $\tilde{L}_g= M_0^2 L + 2ML_n$, where $ M_0$ is a constant related to the retraction, $M$ and $L_n$ are the same  constants in \cref{lem:nonexpansive_bound_retraction}.


\subsection{Technical lemmas}
 
\begin{lemma}\cite{chen2020consensus}\label{lem:distance between Euclideanmean and IAM}
	For any $\X\in\St(d,r)^n$, let $\hat x  =\frac{1}{n}\sum_{i=1}^n x_i$ be the Euclidean mean and denote $\hat\X = \mathbf{1}_n\otimes \hat x$. Similarly, let $\bar\X = \mathbf{1}_n\otimes \bar x$, where $\bar{x}$ is the IAM defined in \eqref{eq:IAM}.  
	Moreover, if $\normfro{\X - \bar\X}^2\leq n/2$, one has  
	\begin{equation}\label{key}
	\normfro{\bar x - \hat x }\leq \frac{2\sqrt{r}\normfro{\X-\bar\X}^2}{n}. \tag{P1}
	\end{equation}
\end{lemma}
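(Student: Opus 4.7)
My plan is to exploit the explicit form of the IAM via the polar decomposition of $\hat x$. Recall that for $\hat x \in \R^{d\times r}$ of rank $r$, its projection $\ps(\hat x)$ onto the Stiefel manifold is exactly the polar factor. So I will write $\hat x = \bar x H$, where $H := \bar x^\top \hat x \in \R^{r\times r}$ is symmetric positive semi-definite. The assumption $\normfro{\X-\bar\X}^2 \leq n/2$ will turn out to guarantee that $H$ has full rank, making this decomposition valid.

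First I would compute $\normfro{\bar x - \hat x}$ in terms of $H$. Using $\bar x - \hat x = \bar x(I_r - H)$ and the fact that $\bar x$ has orthonormal columns,
\begin{equation*}
\normfro{\bar x - \hat x}^2 \;=\; \normfro{\bar x(I_r - H)}^2 \;=\; \Tr\bigl((I_r-H)^\top \bar x^\top \bar x (I_r-H)\bigr) \;=\; \normfro{I_r-H}^2.
\end{equation*}
Second, I would derive an identity relating $\normfro{\X-\bar\X}^2$ to $H$. Expanding the squared norm and using $x_i^\top x_i = \bar x^\top \bar x = I_r$ together with $\sum_i x_i = n\hat x = n\bar x H$ gives
\begin{equation*}
\normfro{\X - \bar\X}^2 \;=\; \sum_{i=1}^n \normfro{x_i - \bar x}^2 \;=\; 2nr - 2n\Tr(H) \;=\; 2n\Tr(I_r-H).
\end{equation*}

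Third, since $\hat x$ is a convex combination of matrices with unit operator norm, $\normtwo{\hat x} \leq 1$, so the eigenvalues $\sigma_1,\ldots,\sigma_r$ of $H$ (equivalently, the singular values of $\hat x$) all lie in $[0,1]$; hence $I_r - H$ is PSD with eigenvalues $1-\sigma_j \geq 0$. The hypothesis $\normfro{\X-\bar\X}^2 \leq n/2$ forces $\Tr(I_r-H) \leq 1/4$, so each $\sigma_j \geq 3/4$ and $H$ is indeed invertible (ratifying the polar decomposition). Finally, since the eigenvalues $1-\sigma_j$ are nonnegative,
\begin{equation*}
\normfro{I_r-H}^2 \;=\; \sum_{j=1}^r (1-\sigma_j)^2 \;\leq\; r\bigl(\max_j (1-\sigma_j)\bigr)^2 \;\leq\; r\bigl(\Tr(I_r-H)\bigr)^2.
\end{equation*}
Combining the three pieces yields $\normfro{\bar x - \hat x} \leq \sqrt{r}\,\Tr(I_r-H) = \frac{\sqrt{r}}{2n}\normfro{\X-\bar\X}^2$, which comfortably implies the stated bound.

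The only delicate step is the first one: justifying that the IAM minimizer $\bar x$ coincides with the polar factor of $\hat x$ and then recognizing $\normfro{\bar x - \hat x} = \normfro{I_r - H}$; this relies on $\hat x$ having rank $r$, which the quantitative hypothesis $\normfro{\X-\bar\X}^2 \leq n/2$ supplies via the identity in step two. Everything else is a routine trace computation and the elementary inequality $\sum \delta_j^2 \leq r(\max_j \delta_j)^2$ applied to $\delta_j := 1-\sigma_j \geq 0$.
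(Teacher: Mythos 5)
Your proof is correct. Note that the paper does not actually prove this lemma; it is imported verbatim from the cited reference \cite{chen2020consensus}, so there is no in-paper argument to compare against. Your route — writing $\hat x = \bar x H$ with $H = \bar x^\top \hat x$ the symmetric PSD polar part, then combining the two trace identities $\normfro{\bar x - \hat x}^2 = \normfro{I_r - H}^2$ and $\normfro{\X-\bar\X}^2 = 2n\,\Tr(I_r - H)$ with the observation that the eigenvalues of $H$ lie in $[0,1]$ — is the standard one for this kind of statement, and you handle the one genuinely delicate point correctly: the rank-$r$ validity of the decomposition is not circular, because the trace identity $\Tr(\bar x^\top \hat x) = \sum_j \sigma_j(\hat x)$ holds for any maximizer of the Procrustes problem, so the hypothesis $\normfro{\X-\bar\X}^2 \le n/2$ forces $\sigma_j \ge 3/4$ before you ever need $\hat x = \bar x H$. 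Two small remarks: since the $1-\sigma_j$ are nonnegative you could bound $\sum_j(1-\sigma_j)^2 \le \bigl(\sum_j(1-\sigma_j)\bigr)^2$ directly and drop the factor $\sqrt{r}$ entirely; and even with your cruder bound you obtain $\normfro{\bar x - \hat x} \le \frac{\sqrt{r}}{2n}\normfro{\X-\bar\X}^2$, which is a factor of $4$ sharper than the stated \eqref{key}, so the lemma follows with room to spare.
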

The following lemma will be useful to bound the Euclidean distance between two average points $\bar{x}_k$ and $\bar{x}_{k+1}$.

\begin{lemma} \cite{chen2020consensus}\label{lem:bound_of_two_average}
	Suppose $\X, \Y\in \N_1$, where $\N_1$ is defined in \eqref{contraction_region_1}.    
	Then  we have	\[\normfro{\bar{x} - \bar{y}}\leq \frac{1}{1-2\delta_1^2} \normfro{\hat x - \hat y},  \]
	where $\bar{x}$ and $\bar{y}$ are the IAM of $x_1,\ldots,x_n$ and $y_1,\ldots,y_n$, respectively. 
\end{lemma}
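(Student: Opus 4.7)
The plan is to exploit the polar-projection structure of the IAM. Since $\bar x=\ps(\hat x)$ with the polar retraction (well-defined because $\sigma_{\min}(\hat x)\ge 1-\delta_1>0$ on $\N_1$), I can write $\hat x=\bar x H_x$ with $H_x=(\hat x^\top \hat x)^{1/2}$, so $\hat x-\bar x=\bar x S_x$ where $S_x:=H_x-I_r$ is symmetric, and analogously $\hat y-\bar y=\bar y S_y$. Because $\normop{\hat x}\le\frac1n\sum_i\normop{x_i}=1$, we get $H_x\preceq I_r$, hence $S_x\preceq 0$, and similarly $S_y\preceq 0$.

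The quantitative heart of the argument is to sharpen this to $-\tfrac{\delta_1^2}{2}I_r\preceq S_x\preceq 0$ (not just $-\delta_1 I_r$ from the naive Frobenius bound). For this I would combine the variance decomposition
\[
\sum_{i=1}^n\normfro{x_i-\bar x}^2=\sum_{i=1}^n\normfro{x_i-\hat x}^2+n\normfro{\hat x-\bar x}^2,
\]
which together with $\X\in\N_1$ yields $v^2+\eta^2\le\delta_1^2$, where $v^2:=\tfrac1n\sum_i\normfro{x_i-\hat x}^2=r-\normfro{\hat x}^2$ and $\eta^2:=\normfro{\hat x-\bar x}^2=\normfro{S_x}^2$, with the identity $\normfro{\hat x}^2=\Tr((I_r+S_x)^2)=r+2\Tr(S_x)+\normfro{S_x}^2$. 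Rearranging gives $\Tr(-S_x)=(v^2+\eta^2)/2\le\delta_1^2/2$; since $-S_x$ is PSD, $\normop{-S_x}\le\Tr(-S_x)\le\delta_1^2/2$, and the same holds for $S_y$.

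With these bounds I would expand $\normfro{\hat x-\hat y}^2=\normfro{\bar x-\bar y}^2+2\inp{\bar x-\bar y}{\xi_x-\xi_y}+\normfro{\xi_x-\xi_y}^2$ where $\xi_x=\bar x S_x,\ \xi_y=\bar y S_y$. A short calculation using the symmetry of $S_x,S_y$ (so that traces of symmetric-times-skew matrices vanish) rewrites the cross term as $\Tr((S_x+S_y)B_{xy})$, where $B_{xy}:=I_r-\tfrac12(\bar x^\top\bar y+\bar y^\top\bar x)\succeq 0$ and $\Tr(B_{xy})=\tfrac12\normfro{\bar x-\bar y}^2$. Combining $-\delta_1^2 I_r\preceq S_x+S_y\preceq 0$ with $B_{xy}\succeq 0$ gives $|\Tr((S_x+S_y)B_{xy})|\le\delta_1^2\Tr(B_{xy})=\tfrac{\delta_1^2}{2}\normfro{\bar x-\bar y}^2$. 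Dropping the nonnegative remainder $\normfro{\xi_x-\xi_y}^2$, I obtain $\normfro{\hat x-\hat y}^2\ge(1-\delta_1^2)\normfro{\bar x-\bar y}^2\ge(1-2\delta_1^2)^2\normfro{\bar x-\bar y}^2$, where the last inequality holds because $\delta_1^2\le 3/4$ (a consequence of \eqref{delta_1_and_delta_2}); taking square roots gives the claim.

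The main obstacle is the refined $\tfrac{\delta_1^2}{2}$ operator-norm bound on $S_x$: only with this does the cross term scale like $\delta_1^2\normfro{\bar x-\bar y}^2$, producing the sharp $1-2\delta_1^2$ denominator. The key mechanism is the interplay between the trace bound from the variance identity and the sign information from $\normop{\hat x}\le 1$, packaged via the inequality $\normop{A}\le\Tr(A)$ for PSD $A$.
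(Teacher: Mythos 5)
This lemma is quoted from \cite{chen2020consensus}; the paper you were given states it without reproducing a proof, so there is no internal argument to compare yours against. Judged on its own, your proof is correct and self-contained. The two load-bearing steps both check out. First, the operator-norm bound on $S_x=H_x-I_r$: since $\normtwo{\hat x}\le\frac1n\sum_i\normtwo{x_i}=1$ you get $S_x\preceq 0$, and the variance identity combined with $\normfro{\hat x}^2=\Tr((I_r+S_x)^2)$ and $\normfro{\hat x-\bar x}^2=\normfro{S_x}^2$ collapses exactly to $\Tr(-S_x)=\tfrac12(v^2+\eta^2)\le\tfrac{\delta_1^2}{2}$, whence $\normtwo{S_x}\le\Tr(-S_x)\le\tfrac{\delta_1^2}{2}$ because $-S_x\succeq 0$. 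Second, the cross term: killing the skew parts against the symmetric $S_x,S_y$ indeed yields $\inp{\bar x-\bar y}{\xi_x-\xi_y}=\Tr\bigl((S_x+S_y)B_{xy}\bigr)$ with $B_{xy}=\tfrac12(\bar x-\bar y)^\top(\bar x-\bar y)\succeq 0$ and $\Tr(B_{xy})=\tfrac12\normfro{\bar x-\bar y}^2$, so $\Tr\bigl((S_x+S_y)B_{xy}\bigr)\ge\lambda_{\min}(S_x+S_y)\Tr(B_{xy})\ge-\tfrac{\delta_1^2}{2}\normfro{\bar x-\bar y}^2$. Dropping $\normfro{\xi_x-\xi_y}^2\ge 0$ gives $\normfro{\hat x-\hat y}^2\ge(1-\delta_1^2)\normfro{\bar x-\bar y}^2$, and the final comparison $1-\delta_1^2\ge(1-2\delta_1^2)^2$ reduces to $\delta_1^2\le 3/4$, which holds comfortably since \eqref{delta_1_and_delta_2} forces $\delta_1\le\frac{1}{30\sqrt r}$. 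In fact your argument proves the slightly stronger constant $\frac{1}{\sqrt{1-\delta_1^2}}$ in place of $\frac{1}{1-2\delta_1^2}$; the only hypothesis you should state explicitly is that $\hat x$ (and $\hat y$) has full column rank so that the polar factorization $\hat x=\bar x H_x$ with $\bar x=\ps(\hat x)$ is legitimate, which you do justify via $\sigma_{\min}(\hat x)\ge 1-\delta_1>0$.
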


We also need the following bounds for $\grad \varphi^t(\X)$.  

\begin{lemma}\cite{chen2020consensus}\label{lem:bound_of_grad}
	For any $\X\in\St(d,r)^n$, it follows that
	\be\label{ineq:bound-of-sum-gradh}
	\begin{aligned}
		\normfro{ \sum_{i=1}^{n}\grad \varphi^t(x_{i})}
		\leq L_t\normfro{\X -\bar \X}^2 
	\end{aligned} \ee
	and
	\be\label{ineq:bound-of-gradh} \normfro{\grad \varphi^t(\X)}\leq  L_t\normfro{\X - \bar\X},\ee
	where $L_t$ is the  constant given in \eqref{def:lipschitz consensus function}. Moreover, suppose $\X\in\N_{2}$, where $\N_{2}$ is defined by \eqref{contraction_region_2}. We then have
	\be\label{ineq:bound-of-gradh-i}  \max_{i\in[n]}\normfro{\grad \varphi^t_i(\X)}\leq 2 \delta_{2} . \ee	
\end{lemma}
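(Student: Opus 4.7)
The plan is to obtain the three bounds in succession by first computing the Euclidean gradient of $\varphi^t$ and then carefully using the tangent/normal decomposition of the Riemannian gradient on $\St(d,r)$. A direct computation using symmetry and row-stochasticity of $W^t$ gives
$$\nabla_{x_i}\varphi^t(\X)=\sum_{j=1}^n W^t_{ij}(x_i-x_j)=x_i-\sum_{j=1}^n W^t_{ij}x_j,$$
so that stacking these yields $\nabla\varphi^t(\X)=(I_n-W^t)\X=(I_n-W^t)(\X-\bar\X)$, because $W^t\bar\X=\bar\X$. Crucially, summing over $i$ and using double stochasticity of $W^t$ gives the cancellation $\sum_i\nabla_{x_i}\varphi^t(\X)=0$.

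For \eqref{ineq:bound-of-gradh}, I would use that $\grad_{x_i}=\p_{\T_{x_i}\M}\nabla_{x_i}$ is non-expansive per block, sum the squares, and bound $\|(I_n-W^t)(\X-\bar\X)\|_F$ by $L_t\|\X-\bar\X\|_F$, since $L_t=1-\lambda_n(W^t)$ equals the operator norm of the symmetric PSD matrix $I_n-W^t$. For \eqref{ineq:bound-of-gradh-i}, the same non-expansiveness plus the triangle inequality $\|x_i-x_j\|_F\leq\|x_i-\bar x\|_F+\|x_j-\bar x\|_F\leq 2\delta_2$ on $\N_2$ gives the result at once.

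The delicate part is \eqref{ineq:bound-of-sum-gradh}, where the square on the right-hand side must appear. The idea is to write $\grad_{x_i}\varphi^t=\nabla_{x_i}\varphi^t-\p_{N_{x_i}\M}\nabla_{x_i}\varphi^t$; since the Euclidean parts sum to zero by the cancellation above, we get $\sum_i\grad_{x_i}\varphi^t=-\sum_i\p_{N_{x_i}\M}\nabla_{x_i}\varphi^t$. Using the explicit formula $\p_{N_x\M}y=\tfrac12 x(x^\top y+y^\top x)$ together with the Stiefel identity $x_i^\top x_j+x_j^\top x_i=2I_r-(x_i-x_j)^\top(x_i-x_j)$, the symmetric piece collapses to $\sum_j W^t_{ij}(x_i-x_j)^\top(x_i-x_j)$, giving the second-order identity
$$\p_{N_{x_i}\M}\nabla_{x_i}\varphi^t(\X)=\tfrac12\, x_i\sum_{j=1}^n W^t_{ij}(x_i-x_j)^\top(x_i-x_j).$$
Taking Frobenius norms (with $\|x_i\|_{op}=1$) and summing over $i$ produces exactly $2\varphi^t(\X)$ on the right. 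The final step rewrites $\varphi^t(\X)=\tfrac12\langle\X-\bar\X,(I_n-W^t)(\X-\bar\X)\rangle\leq\tfrac{L_t}{2}\|\X-\bar\X\|_F^2$, again using $(I_n-W^t)\bar\X=0$. The main obstacle is spotting the Stiefel identity that forces the normal component to be quadratic in the disagreement $x_i-x_j$; without this second-order cancellation, only a linear bound would be available and the square on the right-hand side of \eqref{ineq:bound-of-sum-gradh} would fail.
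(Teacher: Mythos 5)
Your proof is correct, and it follows essentially the route of the cited source: the paper itself only imports this lemma from \cite{chen2020consensus} without reproducing the proof, but the key identity you derive for the normal component, $\p_{N_{x_i}\M}\nabla_{x_i}\varphi^t(\X)=\tfrac12 x_i\sum_j W^t_{ij}(x_i-x_j)^\top(x_i-x_j)$, is exactly the decomposition \eqref{rewrite} that the paper quotes and uses in the proof of \cref{lem:convergence_of_deviation from mean}. Your handling of the three bounds — operator-norm bound $\|I_n-W^t\|_2=L_t$ for \eqref{ineq:bound-of-gradh}, the cancellation $\sum_i\nabla_{x_i}\varphi^t=0$ plus the quadratic normal component for \eqref{ineq:bound-of-sum-gradh}, and non-expansiveness of the tangent projection with the triangle inequality on $\N_2$ for \eqref{ineq:bound-of-gradh-i} — is complete and matches the standard argument.
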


Applying \cref{lem:bound_of_two_average} to the update rule of our algorithms gives the following lemma. 

\begin{lemma}\label{lem:bound_of_k_k+1}
	If $\X_k\in\N, \X_{k+1}\in\N_1$ and  $x_{i,k+1} = \Retr_{x_{i,k}}(-\alpha \grad \varphi_i^t(\X_{k}) + \beta u_{i,k})$, where $u_{i,k}\in\T_{x_{i,k}}\M$, $0\leq \alpha  \leq \frac{1}{M}$,  $0\leq \beta $. 	Let $\U_k^\top=(u_{1,k}^\top \ \ldots \ u_{n,k}^\top)$ and $\hat{u}_k=\frac{1}{n}\sum_{i=1}^{n}u_{i,k}$.  It follows that 
	\[\normfro{\bar x_k - \bar{ x}_{k+1}}\leq  \frac{1}{1-2\delta_1^2}\left(  \frac{2L_t^2 \alpha +  L_t\alpha   }{n}  \normfro{\X_k - \bar\X_k}^2 + {\beta} \normfro{\hat u_k}+ \frac{ 2M\beta^2}{n}\normfro{\U_k}^2 \right).\]
\end{lemma}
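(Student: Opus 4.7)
The target bound compares the IAMs $\bar x_k$ and $\bar x_{k+1}$, while the retraction update is naturally described in ambient Euclidean space. So the first step is to pass from IAMs to Euclidean averages via the already-stated Lemma~6.4 (\cref{lem:bound_of_two_average}): since both $\X_k$ and $\X_{k+1}$ lie in $\N_1$, we get
\[
\normfro{\bar x_k - \bar x_{k+1}} \le \frac{1}{1-2\delta_1^2}\normfro{\hat x_k - \hat x_{k+1}},
\]
and everything reduces to controlling the Euclidean average drift.

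Next, I would expand the retraction via its second-order boundedness (property \eqref{ineq:ret_second-order}). Writing $\xi_{i,k} := -\alpha\grad\varphi_i^t(\X_k) + \beta u_{i,k}$, we have $x_{i,k+1} = x_{i,k} + \xi_{i,k} + e_{i,k}$ with $\normfro{e_{i,k}} \le M\normfro{\xi_{i,k}}^2$. Averaging and using the triangle inequality gives
\[
\normfro{\hat x_{k+1}-\hat x_k} \le \frac{\alpha}{n}\Big\|\sum_{i=1}^n \grad\varphi_i^t(\X_k)\Big\|_F + \beta\normfro{\hat u_k} + \frac{1}{n}\sum_{i=1}^n \normfro{e_{i,k}}.
\]
For the first term I would invoke the sum-of-gradients bound \eqref{ineq:bound-of-sum-gradh} of \cref{lem:bound_of_grad}, producing exactly $\tfrac{L_t\alpha}{n}\normfro{\X_k-\bar\X_k}^2$. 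The middle term $\beta\normfro{\hat u_k}$ is already in the required form.

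For the retraction-error term, I would apply $\normfro{\xi_{i,k}}^2 \le 2\alpha^2\normfro{\grad\varphi_i^t(\X_k)}^2 + 2\beta^2\normfro{u_{i,k}}^2$, sum over $i$, and use the vectorized bound \eqref{ineq:bound-of-gradh} to get $\sum_i\normfro{\grad\varphi_i^t(\X_k)}^2 = \normfro{\grad\varphi^t(\X_k)}^2 \le L_t^2\normfro{\X_k-\bar\X_k}^2$. This yields
\[
\frac{1}{n}\sum_{i=1}^n\normfro{e_{i,k}} \le \frac{2M\alpha^2 L_t^2}{n}\normfro{\X_k-\bar\X_k}^2 + \frac{2M\beta^2}{n}\normfro{\U_k}^2.
\]
The only remaining cosmetic step is to absorb the $M\alpha^2$ factor: since $\alpha \le 1/M$, we have $M\alpha^2 \le \alpha$, converting the first summand into $\tfrac{2L_t^2\alpha}{n}\normfro{\X_k-\bar\X_k}^2$, which matches the target. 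Combining with the IAM-to-Euclidean factor from step 1 completes the proof.

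There is no genuinely hard step here; the argument is essentially a careful bookkeeping exercise. The most delicate points are (i) recognizing that the $\frac{L_t\alpha}{n}$ and $\frac{2L_t^2\alpha}{n}$ contributions come from two different estimates (the \emph{vanishing} sum of gradients versus the individual gradient norms inside the quadratic retraction error), and (ii) using the condition $\alpha\le 1/M$ at the end to downgrade $M\alpha^2$ to $\alpha$, which is essential for the stated coefficient to appear.
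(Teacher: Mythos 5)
Your proposal is correct and follows essentially the same route as the paper's proof: reduce to the Euclidean averages via \cref{lem:bound_of_two_average}, split the drift into the averaged tangent step plus the second-order retraction error via \eqref{ineq:ret_second-order}, bound the gradient contributions with \eqref{ineq:bound-of-sum-gradh} and \eqref{ineq:bound-of-gradh}, and finally use $\alpha\le 1/M$ to replace $M\alpha^2$ by $\alpha$. The only (immaterial) difference is that the paper applies the IAM-to-Euclidean-mean reduction at the end rather than at the start.
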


\begin{proof}
 From \cref{lem:nonexpansive_bound_retraction} and \cref{lem:bound_of_grad}, we  have 
\begin{align*}
&\quad \normfro{\hat x_k - \hat x_{k+1}} \\
&\leq  \normfro{\hat x_k+ \frac{1}{n}\sum_{i=1}^{n} (-\alpha \grad \varphi_i^t(\X_{k}) +\beta u_{i,k}) -\hat x_{k+1} } + \normfro{ \frac{1}{n}\sum_{i=1}^{n} (-\alpha \grad \varphi_i^t(\X_{k}) +\beta u_{i,k})}  \\
&\stackrel{\eqref{ineq:ret_second-order}}{\leq } \frac{M}{n} \sum_{i=1}^{n} \normfro{\alpha  \grad \varphi_i^t(\X_{k}) +\beta u_{i,k} }^2 + \alpha \normfro{ \frac{1}{n}\sum_{i=1}^{n}  \grad \varphi_i^t(\X_{k})} +\beta \normfro{\hat u_k}\\
&\leq \frac{ 2M\alpha^2}{n}\normfro{\grad \varphi^t(\X_k)}^2 + \frac{ 2M\beta^2}{n}\normfro{\U_k}^2 + \alpha \normfro{ \frac{1}{n}\sum_{i=1}^n \grad \varphi_i^t(\X_{k})} +\beta \normfro{\hat u_k}\\ 
&\stackrel{\eqref{ineq:bound-of-gradh}\eqref{ineq:bound-of-sum-gradh}}{\leq }  \frac{ 2L_t^2M\alpha^2 + L_t\alpha }{n}  \normfro{\X_k - \bar\X_k}^2 + \frac{ 2M\beta^2}{n}\normfro{\U_k}^2 + \beta \normfro{\hat u_k}.
\end{align*}
Therefore, it follows from \cref{lem:bound_of_two_average} that
\begin{align*}
\normfro{\bar x_k - \bar{ x}_{k+1}} \leq \frac{1}{1-2\delta_1^2} \normfro{\hat x_k - \hat x_{k+1}} \leq  \frac{1}{1-2\delta_1^2}\left(  \frac{2L_t^2 \alpha +  L_t\alpha   }{n}  \normfro{\X_k - \bar\X_k}^2 + {\beta} \normfro{\hat u_k}+ \frac{ 2M\beta^2}{n}\normfro{\U_k}^2 \right),
\end{align*}
where we use the fact that $\alpha \leq \frac{1}{M}$.
\end{proof}

\section{Proofs for \cref{sec:distributed gradient method}}
 	We use the notations \[ \V_k = [ v_{1,k}^\top \ \ldots \ v_{n,k}^\top ]^\top, \quad \hat v_k = \frac{1}{n}\sum_{i=1}^n v_{i,k},\]
\[ g_{i,k} = \grad f_i(x_{i,k})\quad  \textit{and}\quad \hat g_k = \frac{1}{n}\sum_{i=1}^n g_{i,k}.\]
The following lemma is useful to show $\X_k\in\N$ for all $k.$
\begin{lemma}\label{lem:total deviation from 1/N}\cite[Lemma 11]{chen2020consensus}
	Given any $\X\in\N_{2}$, where $\N_{2}$ is defined in \eqref{contraction_region_2}, 
	if $t\geq \lceil \log_{\sigma_2}(\frac{1}{2\sqrt{n}})\rceil$, we have
	\be\label{lem:bound_of_2_infty} \max_{i\in[n]} \normfro{\sum_{j=1}^n (W^t_{ij}-1/n)x_j}\leq \frac{\delta_{2}}{2}.  \ee
\end{lemma}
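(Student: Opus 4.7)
The plan is to reduce the claim to a spectral-gap estimate for $W^t-\tfrac{1}{n}\mathbf{1}\mathbf{1}^\top$ combined with the definition of $\N_2$. The key observation is that the row sums of $W^t$ and of $\tfrac{1}{n}\mathbf{1}\mathbf{1}^\top$ both equal $1$, so $\sum_{j=1}^n (W^t_{ij}-1/n)=0$. Therefore we can insert the IAM $\bar x$ for free:
\begin{equation*}
\sum_{j=1}^n (W^t_{ij}-1/n)\,x_j \;=\; \sum_{j=1}^n (W^t_{ij}-1/n)(x_j-\bar x).
\end{equation*}
Applying the triangle inequality and pulling out the $\ell_{F,\infty}$ norm gives
\begin{equation*}
\Bigl\|\sum_{j=1}^n (W^t_{ij}-1/n)(x_j-\bar x)\Bigr\|_{\mathrm F}
\;\le\; \Bigl(\sum_{j=1}^n |W^t_{ij}-1/n|\Bigr)\,\normfroinf{\X-\bar\X}.
\end{equation*}

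The main technical step is to bound $\sum_{j=1}^n |W^t_{ij}-1/n|$ by $\tfrac12$ under the hypothesis $t\ge \lceil\log_{\sigma_2}(\tfrac{1}{2\sqrt n})\rceil$. For this I would use Cauchy--Schwarz in the $j$-index to pass from the $\ell_1$ row-sum to the $\ell_2$ row-norm,
\begin{equation*}
\sum_{j=1}^n |W^t_{ij}-1/n| \;\le\; \sqrt{n}\,\Bigl(\sum_{j=1}^n (W^t_{ij}-1/n)^2\Bigr)^{1/2},
\end{equation*}
and then note that the inner quantity is the squared Euclidean norm of the $i$-th row of $W^t-\tfrac{1}{n}\mathbf{1}\mathbf{1}^\top$, which is bounded by the spectral norm of that matrix. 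Under \cref{assump:doubly-stochastic} this spectral norm equals $\sigma_2^t$, since $W$ is symmetric doubly stochastic and $\tfrac{1}{n}\mathbf{1}\mathbf{1}^\top$ projects onto the top eigenspace. Hence $\sum_j |W^t_{ij}-1/n|\le \sqrt{n}\,\sigma_2^t$, and the choice of $t$ makes this at most $\sqrt{n}\cdot \tfrac{1}{2\sqrt n}=\tfrac12$.

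Combining the two estimates and using $\X\in\N_2$, i.e.\ $\normfroinf{\X-\bar\X}\le \delta_2$, gives the desired bound $\tfrac{1}{2}\delta_2$ uniformly in $i$. The only subtlety worth double-checking is the equality $\|W^t-\tfrac{1}{n}\mathbf{1}\mathbf{1}^\top\|_{\mathrm{op}}=\sigma_2^t$: it follows from the spectral decomposition of the symmetric matrix $W$, whose eigenvalue $1$ (with eigenvector $\tfrac{1}{\sqrt n}\mathbf{1}$) is annihilated by the subtraction, leaving the second-largest singular value $\sigma_2$ in control, and the power $t$ then acts eigenvalue-wise. Everything else is routine, so I do not anticipate a genuine obstacle here; the proof is essentially a one-line spectral computation dressed up with Cauchy--Schwarz and the centering trick.
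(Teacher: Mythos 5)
Your proposal is correct and follows essentially the same route the paper indicates: the centering trick using $\sum_j(W^t_{ij}-1/n)=0$, the bound by $\bigl(\sum_j|W^t_{ij}-1/n|\bigr)\normfroinf{\X-\bar\X}$, and the estimate $\sum_j|W^t_{ij}-1/n|\le\sqrt{n}\,\sigma_2^t\le\tfrac12$ from the choice of $t$ — this is exactly the chain displayed in \eqref{ineq:W mult step t}, with your Cauchy--Schwarz plus spectral-gap step filling in the middle inequality as in the cited reference. No gaps.
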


\begin{lemma} \label{lem:recursive lemma}
	Under the same conditions of \cref{thm:linear_rate_consensus}, if $\X_k\in\N$ and  \[ x_{i,k+1} = \Retr_{x_{i,k}}(-\alpha \grad \varphi_i^t(\X_{k}) + \beta v_{i,k}), \quad \forall i\in[n],\] where $v_{i,k}\in\T_{x_{i,k}}\M$, 
  the following holds
 \[ \normfro{\X_{k+1}-\bar{\X}_{k+1}} 
	 \leq \rho_t\normfro{\X_k - \bar \X_k} + \beta_k \normfro{\V_k}.\]
\end{lemma}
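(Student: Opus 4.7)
The plan is to exploit the polar retraction's nonexpansiveness (\cref{lem:nonexpansive_bound_retraction}, inequality \eqref{ineq:ret_nonexpansive}) together with the variational characterization of the IAM to reduce the statement to the pure consensus estimate already established in \cref{thm:linear_rate_consensus}. The key observation is that $\bar{x}_k \in \St(d,r)$, so it is an admissible ``target point'' for the nonexpansiveness inequality, and simultaneously $\bar{\X}_{k+1} = \mathbf{1}_n\otimes \bar{x}_{k+1}$ is a \emph{minimizer} in the definition of the IAM distance, which gives a cheap upper bound by substituting any other Stiefel point.

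First I would write
\[
\normfro{\X_{k+1}-\bar{\X}_{k+1}}^2 \;=\; \min_{y\in\St(d,r)} \sum_{i=1}^n \normfro{y - x_{i,k+1}}^2 \;\le\; \sum_{i=1}^n \normfro{\bar{x}_k - x_{i,k+1}}^2,
\]
using $\bar{x}_k\in\St(d,r)$ as the competitor. Next, since $-\alpha\grad\varphi_i^t(\X_k)+\beta v_{i,k}\in\T_{x_{i,k}}\M$, I would apply the polar retraction nonexpansiveness \eqref{ineq:ret_nonexpansive} with target $y=\bar{x}_k$ to each summand:
\[
\normfro{\bar{x}_k - x_{i,k+1}} \;\le\; \normfro{\,x_{i,k} - \alpha\grad\varphi_i^t(\X_k) + \beta v_{i,k} - \bar{x}_k\,}.
\]

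Then I would square, sum over $i$, take square roots, and recognize the resulting $\ell_2$-of-Frobenius norm as a single Frobenius norm of the stacked vector:
\[
\sqrt{\sum_{i=1}^n \normfro{\bar{x}_k - x_{i,k+1}}^2}
\;\le\; \normfro{\X_k - \alpha\grad\varphi^t(\X_k) + \beta\V_k - \bar{\X}_k}.
\]
The triangle inequality for $\normfro{\cdot}$ then splits this into
\[
\normfro{\X_k - \alpha\grad\varphi^t(\X_k) - \bar{\X}_k} + \beta\normfro{\V_k}.
\]
Finally, because $\X_k\in\N$, the first summand is exactly the quantity bounded in the first inequality of \eqref{linear rate of consensus} in \cref{thm:linear_rate_consensus}, which (with the convention $\rho_t=\sqrt{1-\gamma_t\alpha}$) is at most $\rho_t\normfro{\X_k-\bar{\X}_k}$. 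Chaining the estimates yields the claim.

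I do not expect any genuine obstacle: the nonexpansiveness of the polar retraction is what makes the perturbation by $\beta v_{i,k}$ decouple cleanly from the consensus contraction, and the IAM's variational definition is precisely tailored for substituting $\bar{x}_k$ as a suboptimal competitor. The only small subtlety is to remember that $\X_{k+1}$ need not itself lie in $\N$ a priori for this argument (that would require a further restriction on $\beta$, handled elsewhere in \cref{lem:convergence_of_deviation from mean}); for the present one-step recursion, the competitor trick avoids invoking \cref{thm:linear_rate_consensus} at iterate $k+1$ and only uses it at iterate $k$, where the hypothesis $\X_k\in\N$ is granted.
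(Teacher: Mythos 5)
Your proposal is correct and follows essentially the same route as the paper's proof: bound $\normfro{\X_{k+1}-\bar\X_{k+1}}$ by $\normfro{\X_{k+1}-\bar\X_k}$ via the variational definition of the IAM, apply the polar retraction's nonexpansiveness \eqref{ineq:ret_nonexpansive} with target $\bar x_k$, split off $\beta\normfro{\V_k}$ by the triangle inequality, and invoke the first inequality of \eqref{linear rate of consensus} at iterate $k$. Your closing remark about only needing $\X_k\in\N$ (not $\X_{k+1}\in\N$) is exactly the point of the competitor trick and matches the paper's argument.
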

\begin{proof}
	By the definition of IAM, we have
	\be\label{ineq:de-1}
	\begin{aligned}
	\quad &\normfro{\X_{k+1}-\bar{\X}_{k+1}}^2\leq	\normfro{\X_{k+1}-\bar{\X}_{k}}^2 \\  	
	= &\sum_{i=1}^n \normfro{\Retr_{x_{i,k}}\left(- \alpha\grad \varphi_i^t(\X_{k}) -\beta_k v_{i,k}\right) - \bar{x}_{k}}^2\\
	\stackrel{\eqref{ineq:ret_nonexpansive}}{\leq }& \sum_{i=1}^n \normfro{x_{i,k} -\alpha\grad \varphi_i^t(\X_{k})   -\beta_k v_{i,k} -  \bar{x}_{k}}^2.
	\end{aligned}\ee
	   	Let $\V_k = [ v_{1,k}^\top \ \ldots \ v_{n,k}^\top ]^\top $.
     Then, we get
	 \be\label{ineq:de-2}
	 \bad
	 \normfro{\X_{k+1}-\bar{\X}_{k+1}}&\leq \normfro{\X_k - \alpha\grad\varphi^t(\X_k) - \beta_k \V_k - \bar\X_k}\\
	 &\leq  \normfro{\X_k - \alpha\grad\varphi^t(\X_k)  - \bar\X_k} + \beta_k \normfro{\V_k}.
    \ead\ee
    By combining inequality \eqref{linear rate of consensus} of \cref{thm:linear_rate_consensus}, we get 
    	 \be\label{ineq:de-3}
	 \bad
	 \normfro{\X_{k+1}-\bar{\X}_{k+1}} 
	 &\leq \rho_t\normfro{\X_k - \bar \X_k} + \beta_k \normfro{\V_k}.
    \ead\ee
    The proof is completed. 
 
\end{proof}

\begin{proof}[\textbf{Proof of \cref{lem:convergence_of_deviation from mean}} ]
   We prove that $\X_k\in\N$ for all $k\geq 0$ by induction. Suppose $\X_k\in\N$, let us show $\X_{k+1}\in\N.$
	Note $\normfro{\V_k}\leq \sqrt{n}D.$ Using \cref{lem:recursive lemma} yields 
	\begin{align}
	\normfro{\X_{k+1}-\bar{\X}_{k+1}}  
	&\leq   \rho_t\normfro{\X_{k} -  \bar{\X}_{k}  } + \beta_k\sqrt{n}D \label{ineq:linear_consensus_grad_0}\\
	&\leq  \rho_t \sqrt{n}\delta_1 + \beta_k\sqrt{n}D \notag\\
	&\leq \sqrt{n}\delta_1,\notag
	\end{align}
	where the last inequality follows from $\beta_k\leq \frac {1-\rho_t}{ D} \delta_1$.  Hence $\X_{k+1}\in\N_1$.
	Secondly, let us verify $\X_{k+1}\in\N_2$.   It follows  from $\beta_k\leq \frac{\alpha\delta_1}{5D}\leq \frac{\alpha}{2D}$ and $\alpha\leq 1$  that  
	 \[\normfroinf{\X_{k+1}-  {\X}_k} \stackrel{\eqref{ineq:ret_nonexpansive}}{\leq} \max_{i\in[n]}\normfro{\alpha\grad \varphi^t(x_{k,i})}+ \beta_k D\stackrel{\eqref{ineq:bound-of-gradh-i}}{\leq} 2\alpha\delta_2 +  \frac{\alpha }{2} \leq 1-\delta_1^2.\]
	Using \cref{lem:bound_of_k_k+1} yields
		\begin{align*}\normfro{\bar x_k - \bar{ x}_{k+1}}&\leq \frac{1}{1-2\delta_1^2}\left(  \frac{2L_t^2 \alpha +  L_t\alpha   }{n}  \normfro{\X_k - \bar\X_k}^2 + {\beta} \normfro{\hat v_k}+ \frac{ 2M\beta_k^2}{n}\normfro{\V_k}^2 \right)\\
		&\leq \frac{1}{1-2\delta_1^2}  \left[  (2L_t^2\alpha +L_t\alpha)\delta_1^2+ \beta_k D  + 2M\beta_k^2 D^2 \right].\end{align*}
		 	Furthermore, since   $L_t\leq 2$,  $\beta_k\leq \frac {\alpha\delta_1}{5D}$, $\alpha \leq 1/M$, we get
		\be\label{ineq:de0} \normfro{\bar x_k - \bar{ x}_{k+1}} \leq  \frac{1}{1-2\delta_1^2} \left( \frac{252}{25}\alpha \delta_1^2 + \frac{\alpha \delta_1}{5} \right)  \leq   \frac{1}{1-2\delta_1^2} \left( \frac{252}{625r} \alpha\delta_2^2   + \frac{ 1}{25\sqrt{r}}\alpha \delta_2 \right),  \ee
		where the last inequality follows from    $\delta_1\leq \frac{1}{5\sqrt{r}}\delta_2$.
	Then, one has 
	\begin{align}
	& \normfro{x_{i,k+1} - \bar{x}_{k+1 }}  \notag \\
	\leq &  \normfro{x_{i,k+1} - \bar{x}_{k}}+\normfro{\bar x_k - \bar{ x}_{k+1}} \notag\\
	\stackrel{\eqref{ineq:ret_nonexpansive}}{\leq} & \normfro{x_{i,k} - \alpha \grad \varphi_i^t(\X_{k}) - \beta_k v_{i,k}  -\bar{x}_{k } }+\normfro{\bar x_k - \bar{ x}_{k+1}}\notag\\
	\leq & \normfro{x_{i,k} - \alpha \grad \varphi_i^t(\X_{k})  -\bar{x}_{k } }+\frac{1}{5}\alpha \delta_1 +\normfro{\bar x_k - \bar{ x}_{k+1}}. \label{ineq:de1}
	\end{align}
	Now,  we proceed by using the same lines in the proof of \cite[Lemma 13]{chen2020consensus} as follows
	\be\label{rewrite}
	\grad \varphi^t_i(\X) = x_i - \sum_{j=1}^n W_{ij}x_j - \half x_i  \sum_{j=1}^n W_{ij}^t(x_{i}-x_{j})^\top (x_{i}-x_{j}),
	\ee
	and 
	\begin{align}
	    \quad &\normfro{x_{i,k} - \alpha \grad \varphi_i^t(\X_{k})  -\bar{x}_{k } }\notag\\
	    	\stackrel{\eqref{rewrite}}{=} &  \normfro{ (1-\alpha)(x_{i,k} - \bar{x}_k) + \alpha (\hat{x}_k- \bar{x}_{k })+\alpha\sum_{j=1}^n{W}_{ij}^t (x_{j,k} - \hat{x}_k) 
	+ \frac{\alpha}{2}x_{i,k}\sum_{j=1}^n W_{ij}^t(x_{i,k}-x_{j,k})^\top(x_{i,k}-x_{j,k})   }\notag\\
	\leq &(1-\alpha)\delta_{2} + \alpha \normfro{   \hat x_{k} - \bar{x}_k }
	 +\alpha \normfro{ \sum_{j=1}^n ({W}_{ij}^t -\frac{1}{n}) x_{j,k}  } 
	+   \frac{1}{2}\normfro{ {\alpha} \sum_{j=1}^n W_{ij}^t(x_{i,k}-x_{j,k})^\top(x_{i,k}-x_{j,k}) }\label{ineq:key0}\\ 
	\leq & (1-\alpha)\delta_{2} + 2\alpha   \delta_{1,t}^2\sqrt{r}    +\alpha \normfro{ \sum_{j=1}^n ({W}_{ij}^t -\frac{1}{n}) x_{j,k} }   +2{\alpha}\delta_{2}^2 \label{ineq:key1}\\
	 {\leq } &(1-\frac{\alpha}{2})\delta_{2} + 2 \alpha   \delta_{1}^2 \sqrt{r}  + 2{\alpha}\delta_{2}^2,\label{ineq:key2}
	\end{align}
	where \eqref{ineq:key0} follows from $\alpha\in[0,1]$, \eqref{ineq:key1} holds by \cref{lem:distance between Euclideanmean and IAM} and \eqref{ineq:key2} follows from \cref{lem:total deviation from 1/N}. Combining this with \eqref{ineq:de1} implies
		\begin{align}
	& \normfro{x_{i,k+1} - \bar{x}_{k+1 }}  \notag \\
	\leq &   (1-\frac{\alpha}{2})\delta_{2} + 2 \alpha   \delta_{1}^2 \sqrt{r}  + 2{\alpha}\delta_{2}^2  + \frac{1}{5}\alpha \delta_1 +\normfro{\bar x_k - \bar{ x}_{k+1}}\notag\\
	\stackrel{\eqref{ineq:de0}}{\leq}&   (1-\frac{\alpha}{2})\delta_{2} + 2 \alpha   \delta_{1}^2 \sqrt{r}  + 2{\alpha}\delta_{2}^2  +\frac{1}{5}\alpha \delta_1 + \frac{1}{1-2\delta_1^2} \left( \frac{252}{625r} \alpha\delta_2^2   + \frac{ 1}{25\sqrt{r}}\alpha \delta_2 \right).\label{ineq:de3}
	\end{align}
	Therefore, substituting the conditions   \eqref{delta_1_and_delta_2} on $\delta_1, \delta_2$  into \eqref{ineq:de3} yields
	\begin{align*} \normfro{x_{i,k+1} - \bar{x}_{k+1 }}\leq \delta_2.
	\end{align*}
	The proof of the first statement is completed. 
	Finally, it follows from \eqref{ineq:linear_consensus_grad_0} that
	\be\label{ineq:linear_consensus_grad}
	\bad
 \normfro{\X_{k+1}-\bar{\X}_{k+1}}  
	&\leq   \rho_t\normfro{\X_{k} -  \bar{\X}_{k}  } + \beta_k\sqrt{n}D  \\
	&\leq   \rho_t^{k+1} \normfro{\X_0 - \bar\X_0}  +    \sqrt{n} D \sum_{l=0}^k \rho_t^{k-l}\beta_l.
	\ead
	\ee
\end{proof}

An immediate result of \cref{lem:convergence_of_deviation from mean} is that the rate of consensus $\normfro{\X_k-\bar\X_k}^2 = \mathcal{O}(\beta_k^2)$ if $\beta_k = \mathcal{O} (\frac{1}{k^p})$.   The proof is similar as \cite[Proposition 8]{liu2017convergence}, we provide it  for completeness. 
\begin{lemma}\label{coro:rate_of_consensus}
	Under \cref{assump:lips in Rn,assump:doubly-stochastic,assum:stochastic-grad,assump:stepsize_beta},
	for   \cref{alg:DRPG}, if $\X_0\in\N$,   $0<\alpha\leq\min\{ \frac{\Phi}{ 2L_t}, 1,\frac{1}{M}\}$, $t\geq \lceil\log_{\sigma_2}(\frac{1}{2\sqrt{n}})\rceil$ and
	\be\label{cond:addtional_cond_on beta} \beta_k = \min\{  \frac {\alpha\delta_1   }{5D} \cdot \frac{1}{(k+1)^p},  \frac {1-\rho_t}{ D} \delta_1\}, \quad p\in(0,1],\ee then there exists a constant $C>0$ such that $\frac{1}{n}\normfro{\X_k-\bar\X_k}^2 \leq CD^2\beta_k^2$ for any $k\geq 0$, where $C$ is independent of $D$  and $n$.
\end{lemma}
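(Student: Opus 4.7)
The plan is to feed the recursion already established in \cref{lem:convergence_of_deviation from mean} into a standard convolution argument. That lemma gives, for every $k\ge 0$,
\[
\normfro{\X_{k+1}-\bar{\X}_{k+1}}\ \le\ \rho_t^{k+1}\normfro{\X_0-\bar\X_0}\;+\;\sqrt{n}\,D\sum_{l=0}^{k}\rho_t^{k-l}\beta_l .
\]
Since $\X_0\in\N$ forces $\normfro{\X_0-\bar\X_0}\le \sqrt n\,\delta_1$, both summands on the right are $\mathcal{O}(\sqrt n)$, so once the right-hand side is shown to be $\le \sqrt n\,\tilde C\,D\,\beta_k$ with $\tilde C=\mathcal{O}(1/(1-\rho_t))$, squaring and dividing by $n$ yields the desired bound with $C=\tilde C^{\,2}=\mathcal{O}(1/(1-\rho_t)^2)$, independent of $D$ and $n$.

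The main work is to show $\sum_{l=0}^{k}\rho_t^{k-l}\beta_l\le \tilde C\,\beta_k$. I would split the sum at $\lfloor k/2\rfloor$. For indices $l\ge k/2$ one has $\beta_l/\beta_k\le ((k+1)/(l+1))^p\le 2^p\le 2$ because $p\in(0,1]$ and $\beta_k$ has the form $c/(k+1)^p$ on the decaying branch (the constant-clamp branch only makes the ratio smaller, so the bound is unaffected). Hence the tail is at most $2\beta_k\sum_{l=\lceil k/2\rceil}^{k}\rho_t^{k-l}\le 2\beta_k/(1-\rho_t)$. For indices $l<k/2$, the exponential factor $\rho_t^{k-l}\le \rho_t^{k/2}$ dominates, and using $\beta_l\le \beta_0$ gives a head contribution bounded by $\beta_0\rho_t^{k/2}/(1-\rho_t)$. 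Because $\rho_t\in(0,1)$ and $p\le 1$, the quantity $\rho_t^{k/2}(k+1)^p$ is uniformly bounded in $k$, so $\rho_t^{k/2}\le B\,\beta_k/\beta_0$ for some absolute constant $B$. Together these estimates give the claimed $\tilde C=\mathcal{O}(1/(1-\rho_t))$ bound on the convolution.

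The homogeneous term $\rho_t^{k+1}\normfro{\X_0-\bar\X_0}\le \rho_t^{k+1}\sqrt n\,\delta_1$ is treated by the same exponential-beats-polynomial principle: $\rho_t^{k+1}\le B'\beta_{k}/\beta_0$ for an absolute constant $B'$, so this term is also $\mathcal{O}(\sqrt n\,\beta_k)$, and being independent of $D$ it can be absorbed into the $\mathcal{O}(\sqrt n\,D\,\beta_k)$ bound by enlarging the overall constant. Combining the two contributions, using $(a+b)^2\le 2a^2+2b^2$, and dividing by $n$ yields $\frac{1}{n}\normfro{\X_k-\bar\X_k}^2\le C D^2\beta_k^2$. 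For the small values of $k$ where $\beta_k$ sits on the constant clamp $(1-\rho_t)\delta_1/D$, the inequality is trivial: $\X_k\in\N$ already gives $\frac{1}{n}\normfro{\X_k-\bar\X_k}^2\le \delta_1^2$, while $C D^2\beta_k^2=C(1-\rho_t)^2\delta_1^2$, so the statement holds after enlarging $C$ by a factor depending only on $\rho_t$.

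The main obstacle is the bookkeeping in the convolution estimate, in particular verifying that the ratio bound $\beta_l/\beta_k\le 2$ for $l\ge k/2$ is valid uniformly across the transition between the clamped and decaying pieces of the stepsize \eqref{cond:addtional_cond_on beta}. Once one observes that the clamp only decreases $\beta_l$, which can only make the ratio smaller, the obstruction is routine: the whole argument reduces to the clean case $\beta_k=c/(k+1)^p$, which is exactly the setting handled in \cite{liu2017convergence}. The remaining delicate point is keeping track that the constant $C$ we produce depends only on $\rho_t$, $\delta_1$, $\alpha$ and $p$ — never on $D$ or $n$ — which is automatic because $D$ factors through the recursion and $n$ cancels between the initial-condition bound and the $1/n$ normalization.
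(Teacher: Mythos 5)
Your argument is correct for the lemma as stated, but it takes a genuinely different route from the paper. The paper normalizes first: it sets $a_k := \normfro{\X_k-\bar\X_k}/(\sqrt{n}\beta_k)$, turns the one-step bound into the contractive recursion $a_{k+1}\le \rho_t a_k + D\,\beta_k/\beta_{k+1}$, invokes the ratio condition of \cref{assump:stepsize_beta} to get $\beta_k/\beta_{k+1}\le 2$ beyond some finite $K$, handles $k\le K$ by a crude bound $a_k^2\le C'D^2$, and concludes with $C = 2C' + 8/(1-\rho_t)^2$. You instead unroll the recursion from \cref{lem:convergence_of_deviation from mean} and bound the convolution $\sum_{l}\rho_t^{k-l}\beta_l$ directly via a head/tail split at $k/2$. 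Both are standard and both work; your version has the advantage of not needing the limit condition $\beta_{k+1}/\beta_k\to 1$ at all, since the explicit form \eqref{cond:addtional_cond_on beta} suffices, and your observation that the constant clamp only decreases $\beta_l$ and hence only improves the ratio bound is exactly the right point to check. The trade-off is in the constant. Your claim that $\rho_t^{k/2}(k+1)^p$ is bounded by an \emph{absolute} constant is imprecise: $B:=\sup_k \rho_t^{k/2}(k+1)^p$ is of order $(1-\rho_t)^{-p}$, so your head estimate gives $\tilde C = \mathcal{O}\bigl((1-\rho_t)^{-1-p}\bigr)$ and hence $C=\tilde C^2 = \mathcal{O}\bigl((1-\rho_t)^{-2-2p}\bigr)$, whereas the paper's normalized recursion yields $C=\mathcal{O}(1/(1-\rho_t)^2)$, which is the form cited in \cref{thm:convergence of alg 2}. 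This does not invalidate your proof of the lemma as stated (only independence of $D$ and $n$ is claimed there), but if you want to match the $\rho_t$-dependence used downstream you should either sharpen the head estimate or adopt the paper's normalization trick.
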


\begin{proof}[\textbf{Proof of \cref{coro:rate_of_consensus}}] 
	The proof relies on  \cref{lem:convergence_of_deviation from mean}. Let $a_k: = \frac{\normfro{\X_k-\bar\X_k}}{ \sqrt{n}\beta_k}$. 
	 
	It follows from \eqref{ineq:linear_consensus_grad} that
	\be\label{ineq:linear_consensus_grad_1}
	\bad
	a_{k+1} &\leq \rho_t a_k +  D\cdot\frac{\beta_k}{\beta_{k+1}}\leq \rho_t^{k+1-K} a_K +  D \sum_{l=K}^k\rho_t^{k-l} \frac{\beta_l}{\beta_{l+1}}.
	\ead
	\ee
	Recall that $\beta_k=\mathcal{O}(1/D)$ and $\frac{1}{n}\normfro{\X_0-\bar\X_0}^2\leq \delta_1^2$, it follows that $a_0\leq \delta_1 /\beta_0=\mathcal{O}(D)$. Since $\lim_{k\rightarrow \infty}\frac{\beta_{k+1}}{\beta_k}=1$, there exists sufficiently large $K$ such that 
	\[ \frac{\beta_k}{\beta_{k+1}}\leq 2,\quad  \forall k \geq K. \]
	For $0\leq k\leq K$, there exists some $C'>0$ such that \[ a_k^2\leq C' D^2, \] where   $C'$ is independent of  $D$ and $n$. 
	For $k\geq K$, using \eqref{ineq:linear_consensus_grad_1} gives $a_k^2\leq CD^2$, where $C = 2C'  +  \frac{8}{(1-\rho_t)^2} $. Hence, we get $ \normfro{\X_k-\bar\X_k}^2 /n \leq CD^2 \beta_k^2 $ for all $k\geq 0$, where $C=\mathcal{O}(\frac{1}{(1-\rho_t)^2})$. 
\end{proof}

\begin{lemma} \label{lem:decrease lemma}
	Under \cref{assump:lips in Rn,assump:doubly-stochastic,assum:stochastic-grad,assump:stepsize_beta}, suppose $\X_k\in\N$,   $t\geq \lceil\log_{\sigma_2}(\frac{1}{2\sqrt{n}})\rceil$,  $0<\alpha\leq\min\{ \frac{\Phi}{ 2L_t}, 1,\frac{1}{M}\}$. If $x_{i,k+1} = \Retr_{x_{i,k}}(-\alpha \grad \varphi^t(x_{i,k}) - \beta_k v_{i,k})$, $0<\beta_k\leq    \min\{ \frac{1}{5L_g}, \frac{\alpha \delta_1 }{5D} \}  $ and $\beta_{k}\geq \beta_{k+1}$, where $v_{i,k} $ satisfies \cref{assum:stochastic-grad} and $L_g$ is given in \cref{lem:lipschitz}. It follows that
	\be\label{ineq:main-recursion}\bad
  &	\E_kf(\bar x_{k+1})\leq f(\bar{x}_k)  - \frac{\beta_k }{4} \normfro{\hat g_k}^2  - \frac{\beta_k}{4}\normfro{\grad f(\bar x_k)}^2\\
  &\quad + \frac{3L_g\Xi^2}{2 n} \beta_k^2 
	+    (\frac{CD^2 L_G^2}{2} + \cT_1D^4)\beta_k^3  + \cT_2 L_gD^4\beta_k^4,
	\ead\ee
	where $L_G$ is given in \cref{lem:lipschitz}, $C$ is given in \cref{coro:rate_of_consensus}, $\cT_1=  2(4\sqrt{r} + 6 \alpha)^2 C^2+ 8 M^2$ and $\cT_2 = 201\alpha^2C^2 +  9M^2 . $
\end{lemma}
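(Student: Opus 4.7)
The plan is to apply the manifold Lipschitz inequality \eqref{ineq:lipschitz} at the two consecutive IAM points $\bar x_k$ and $\bar x_{k+1}$, and then carefully track the higher-order terms produced by (i) the retraction error, (ii) the deviation of the IAM from the Euclidean mean $\hat x_k$, and (iii) the stochastic noise in $v_{i,k}$. Concretely, I would start from
\[
f(\bar x_{k+1}) \le f(\bar x_k) + \inp{\grad f(\bar x_k)}{\bar x_{k+1}-\bar x_k} + \tfrac{L_g}{2}\normfro{\bar x_{k+1}-\bar x_k}^2,
\]
then write $\bar x_{k+1}-\bar x_k = (\bar x_{k+1}-\hat x_{k+1})+(\hat x_{k+1}-\hat x_k)+(\hat x_k-\bar x_k)$ and use \cref{lem:distance between Euclideanmean and IAM} together with the retraction bound \eqref{ineq:ret_second-order} to identify the dominant contribution as $-\beta_k\hat v_k$, with a residual of order $\beta_k^2$. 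This identification, combined with $\frac{1}{n}\sum_i\grad\varphi_i^t(\X_k)=0$ when $\X_k$ is at consensus and the bound $\|\grad\varphi^t(\X_k)\|\le L_t\|\X_k-\bar\X_k\|$ from \cref{lem:bound_of_grad}, is what lets every consensus-induced correction inherit the smallness of $\|\X_k-\bar\X_k\|$.

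For the linear term, I would take $\E_k$ so that $\E_k\hat v_k=\hat g_k$, write
\[
-\beta_k\inp{\grad f(\bar x_k)}{\hat g_k} = -\tfrac{\beta_k}{2}\normfro{\grad f(\bar x_k)}^2 -\tfrac{\beta_k}{2}\normfro{\hat g_k}^2 + \tfrac{\beta_k}{2}\normfro{\grad f(\bar x_k)-\hat g_k}^2,
\]
and control the last piece by the Riemannian Lipschitz bound \eqref{ineq:lips_riemanniangrad}, which gives $\|\grad f(\bar x_k)-\hat g_k\|^2\le\frac{L_G^2}{n}\|\X_k-\bar\X_k\|^2$. Invoking \cref{coro:rate_of_consensus} then turns this into a $\frac{CD^2L_G^2}{2}\beta_k^3$ contribution, matching one of the cubic terms on the right-hand side. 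All residual contributions to the inner product coming from $\bar x_{k+1}-\bar x_k+\beta_k\hat v_k$ (namely the retraction error $\le\frac{M}{n}\sum_i\|\alpha\grad\varphi_i^t+\beta_kv_{i,k}\|^2$ and the IAM-vs-mean correction $\le\frac{2\sqrt r}{n}\|\X-\bar\X\|^2$) are dominated, after using $\|\grad f(\bar x_k)\|\le D$, by terms of the form $(\ldots)D^4\beta_k^3$ that feed into $\mathcal T_1D^4\beta_k^3$.

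For the quadratic term, I would use $\|\bar x_{k+1}-\bar x_k\|^2\le 3\beta_k^2\|\hat v_k\|^2 + 3\|\text{IAM-to-mean error}\|^2 + 3\|\hat x_{k+1}-\hat x_k+\beta_k\hat v_k\|^2$, take conditional expectation to get $\E_k\|\hat v_k\|^2\le\|\hat g_k\|^2+\Xi^2/n$ (this is where the independence of the $v_{i,k}$ across $i$ is used, to kill cross terms so the variance divides by $n$), and bound the remaining pieces by $L_g\cdot O(\beta_k^3+\beta_k^4)$ using \cref{lem:nonexpansive_bound_retraction}, \cref{lem:distance between Euclideanmean and IAM}, and \cref{coro:rate_of_consensus}. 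This gives the $\frac{L_g\Xi^2}{2n}\beta_k^2$, $\frac{L_g}{2}\beta_k^2\|\hat g_k\|^2$, and $\mathcal T_2 L_gD^4\beta_k^4$ contributions.

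The final step is absorption: the only surviving $\|\hat g_k\|^2$ coefficient on the right is $-\frac{\beta_k}{2}+\frac{L_g\beta_k^2}{2}$, and the hypothesis $\beta_k\le\frac{1}{5L_g}$ makes this at most $-\frac{\beta_k}{4}$. Collecting terms yields \eqref{ineq:main-recursion}. I expect the main obstacle to be the bookkeeping in step (ii): verifying that the retraction error and the IAM correction, each $O(\beta_k^2)$ pointwise, produce only $\beta_k^3$ or $\beta_k^4$ contributions once paired with $\grad f(\bar x_k)$ (which is merely bounded, not small) or with $\|\bar x_{k+1}-\bar x_k\|$. The key that makes this go through is that $\|\X_k-\bar\X_k\|^2=O(\beta_k^2)$ from \cref{coro:rate_of_consensus}, so every ``consensus correction'' picks up an extra $\beta_k^2$ and $\|\grad\varphi^t\|$ is itself $O(\beta_k)$; this extra smallness is precisely what converts the naively $\beta_k^2$ leftovers into the $\beta_k^3,\beta_k^4$ terms in the lemma.
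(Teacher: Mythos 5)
Your overall architecture is the same as the paper's: the Lipschitz inequality \eqref{ineq:lipschitz} at $\bar x_k,\bar x_{k+1}$, the polarization identity after inserting $\E_k\hat v_k=\hat g_k$, the bound $\normfro{\grad f(\bar x_k)-\hat g_k}^2\le\frac{L_G^2}{n}\normfro{\X_k-\bar\X_k}^2$, the three-way split of $\bar x_{k+1}-\bar x_k$ through $\hat x_k,\hat x_{k+1}$ using \cref{lem:distance between Euclideanmean and IAM} and \eqref{ineq:ret_second-order}, the independence argument for the $\Xi^2/n$ variance term, and the final absorption via $\beta_k\le\frac{1}{5L_g}$. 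All of that matches.

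There is, however, one step that does not work as you describe it: the residual inner product $\inp{\grad f(\bar x_k)}{\E_k[\bar x_{k+1}-\bar x_k+\beta_k\hat v_k]}$. The residual has norm of order $D^2\beta_k^2$ (retraction error plus IAM-vs-mean correction, after invoking \cref{coro:rate_of_consensus}), so bounding the pairing by Cauchy--Schwarz with $\normfro{\grad f(\bar x_k)}\le D$, as you propose, yields a term of order $D^3\beta_k^2$. Since $D\beta_k\le\frac{\alpha\delta_1}{5}<1$, we have $D^3\beta_k^2 > D^4\beta_k^3$, so this term is \emph{not} dominated by $\cT_1D^4\beta_k^3$ and does not appear anywhere in \eqref{ineq:main-recursion}; you would prove a strictly weaker inequality with an extra $O(D^3\beta_k^2)$ error (which, unlike the $\frac{\Xi^2}{n}\beta_k^2$ term, carries no $1/n$ factor and would degrade the downstream linear-speedup constants). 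The paper instead applies Young's inequality with weight $\beta_k$: $\inp{\grad f(\bar x_k)}{b}\le\frac{\beta_k}{4}\normfro{\grad f(\bar x_k)}^2+\frac{1}{\beta_k}\normfro{b}^2$, so the residual enters squared, $\frac{1}{\beta_k}\cdot O(D^4\beta_k^4)=O(D^4\beta_k^3)$, at the price of sacrificing $\frac{\beta_k}{4}\normfro{\grad f(\bar x_k)}^2$ --- which is precisely why the lemma's coefficient on $-\normfro{\grad f(\bar x_k)}^2$ is $\frac{\beta_k}{4}$ rather than the $\frac{\beta_k}{2}$ your accounting would leave. With that one substitution your argument goes through and coincides with the paper's.
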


Note the variance term is in the order of $\mathcal{O}(\frac{\Xi^2}{n}\beta_k^2)$, since the gradient batch size is $n$.
 
\begin{proof}[\textbf{Proof of \cref{lem:decrease lemma}}]
	Denote the conditional expectation $\E_{i,k} v_{i,k} : = \E[v_{i,k}|x_{i,k}]$ and $\E_k :=\E[\cdot |\X_k]$. 
		By invoking \cref{lem:lipschitz}, we have
		\be\label{ineq:derivation_lem_conv_rgd_00} 
	\begin{aligned}
	&\quad	\E_kf(\bar x_{k+1}) \leq f(\bar{x}_k) +  \inp{\grad f(\bar x_k)}{\E_k\bar x_{k+1} - \bar x_k} +  \frac{L_g}{2}\E_k\normfro{\bar x_{k+1} - \bar x_k}^2\\ 
	& = f(\bar{x}_k)  - \inp{\grad f(\bar x_k)}{\beta_k \hat g_k} +  \inp{\grad f(\bar x_k)}{\E_k[\bar x_{k+1} - \bar x_k + \beta_k \hat v_k]}+  \frac{L_g}{2}\E_k\normfro{\bar x_{k+1} - \bar x_k}^2\\
	 &=  f(\bar{x}_k) - \frac{\beta_k}{2}\normfro{\grad f(\bar x_k)}^2  - \frac{\beta_k}{2}\normfro{\hat g_k}^2 + \frac{\beta_{k}}{2}\normfro{\grad f(\bar x_k)-\hat g_k}^2 \\
	 &\quad+\inp{\grad f(\bar x_k)}{\E_k \bar x_{k+1} - \bar x_k + \beta_k \hat g_k}+  \frac{L_g}{2}\E_k\normfro{\bar x_{k+1} - \bar x_k}^2,
	\end{aligned}\ee
where $\hat v_k = \frac{1}{n}\sum_{i=1}^n v_{i,k}$ and  we use $\E_k \hat v_k = \hat g_k$ in the first equation.

	Note that for $\beta_k>0$, we have
	\begin{align*}
	 &\quad \inp{\grad f(\bar x_k)}{\E_k \bar x_{k+1} - \bar x_k + \beta_k \hat g_k}\leq  \frac{\beta_k}{4}\normfro{\grad f(\bar x_k)}^2  +  \frac{1}{ \beta_k}\normfro{\E_k \bar x_{k+1} - \bar x_k + \beta_k \hat g_k }^2.
	\end{align*} Plugging this into \eqref{ineq:derivation_lem_conv_rgd_00} yields
	\be\label{ineq:derivation_lem_conv_rgd_01} 
	\begin{aligned}
		&\quad \E_kf(\bar x_{k+1})\\
		&\leq  f(\bar{x}_k)- \frac{\beta_k}{2}\normfro{\hat g_k}^2  - \frac{\beta_k}{4}\normfro{\grad f(\bar x_k)}^2 + \frac{\beta_{k}}{2}\underbrace{\normfro{\grad f(\bar x_k)-\hat g_k}^2 }_{:=a_1}
		+\frac{1}{ \beta_k}\underbrace{\normfro{\E_k [\bar x_{k+1} - \bar x_k + \beta_k \hat v_k] }^2}_{:=a_2} \\
		&\quad +  \frac{L_g}{2}\underbrace{\E_k\normfro{\bar x_{k+1} - \bar x_k}^2}_{:=a_3}.
	\end{aligned}
	\ee
 Using \cref{lem:lipschitz} implies
 \begin{align*}
 	a_1\leq \frac{1}{n}\sum_{i=1}^n\normfro{\grad f(x_{i,k}) - \grad f(\bar x_k)}^2\stackrel{\eqref{ineq:lips_riemanniangrad}}{\leq} \frac{L_G^2}{n}\normfro{\X_k-\bar\X_k}^2.
 \end{align*}
 Secondly, we use the following inequality to derive the upper   bound of $a_2$.
 	  From \cref{lem:convergence_of_deviation from mean}, we have $\X_{k+1}\in\N$. One has
 \be\label{ineq:derivation_lem_conv_rgd_02} 
\begin{aligned}
	&\quad  \normfro{\bar x_{k+1} - \bar x_k + \beta_k \hat v_k} \\
	&\leq \normfro{\bar x_k - \hat x_k} + \normfro{\bar x_{k+1} - \hat x_{k+1}} + \normfro{\hat x_k - \beta_k \hat v_k - \hat x_{k+1} } \\
	&\stackrel{\eqref{key}}{\leq }\frac{2\sqrt{r}}{n}(\normfro{\X_k-\bar\X_k}^2 + \normfro{\X_{k+1}-\bar\X_{k+1}}^2)+ \normfro{\hat x_k - \beta_k \hat v_k - \hat x_{k+1} } \\
	&\leq \frac{4\sqrt{r}}{n}\normfro{\X_k-\bar\X_k}^2 + \normfro{\hat x_k - \beta_k \hat v_k - \hat x_{k+1} },
	\end{aligned}\ee
 where we use  $\normfro{\X_k-\bar\X_k}^2 \geq  \normfro{\X_{k+1}-\bar\X_{k+1}}^2$ in the last inequality.

 For the second term, 	since $v_{i,k}\in\T_{x_{i,k}}\M$ we have
 \be\label{ineq:derivation_lem_conv_rgd_03} 
 \begin{aligned}
 &\quad\normfro{\hat x_k - \beta_k \hat v_k - \hat x_{k+1} }\leq \frac{1}{n}\sum_{i=1}^{n}\normfro{ x_{i,k} -\alpha \grad \varphi_i^t(\X_{k}) - \beta_k v_{i,k} - x_{i,k+1} } + \frac{\alpha}{n} \normfro{\sum_{i=1}^n \grad \varphi_i^t(\X_{k})}\\
 &\stackrel{\eqref{ineq:ret_second-order}}{\leq}\frac{M}{n}\sum_{i=1}^{n}\normfro{  \alpha \grad \varphi_i^t(\X_{k})+ \beta_k v_{i,k}   }^2 + \frac{\alpha}{n} \normfro{\sum_{i=1}^n \grad \varphi_i^t(\X_{k})}\\
 &\stackrel{\eqref{ineq:bound-of-sum-gradh}}{\leq}\frac{2M\alpha^2}{n} \normfro{   \grad \varphi^t(\X_{k}) }^2 + \frac{2M \beta_k^2}{n}\normfro{\V_k}^2 + \frac{L_t\alpha}{n} \normfro{\X_k-\bar\X_k}^2\\
  &\stackrel{\eqref{ineq:bound-of-gradh}}{\leq}\frac{2L_t^2M\alpha^2 + L_t\alpha}{n} \normfro{\X_k-\bar\X_k}^2  + \frac{2M \beta_k^2}{n}\normfro{\V_k}^2\\
 & \leq \frac{10 \alpha}{n} \normfro{\X_k-\bar\X_k}^2  + \frac{2M \beta_k^2}{n}\normfro{\V_k}^2,
 \end{aligned}\ee
 where we use $\alpha\leq \frac{1}{M}$  and $L_t\leq 2$ in the last inequality. Plugging \eqref{ineq:derivation_lem_conv_rgd_03} into \eqref{ineq:derivation_lem_conv_rgd_02} yields
  \be\label{ineq:derivation_lem_conv_rgd_04} 
   \normfro{\bar x_{k+1} - \bar x_k + \beta_k \hat v_k}^2 \leq 2(\frac{4\sqrt{r} + 10 \alpha}{n})^2 \normfro{\X_k-\bar\X_k}^4  + 2(\frac{2M \beta_k^2}{n})^2\normfro{\V_k}^4.
   \ee
   Then, using Jensen's inequality and $\normfro{\V_k}^2\leq nD^2$ implies
   \begin{align*}
   	a_2 \leq \E_k[\normfro{\bar x_{k+1} - \bar x_k + \beta_k \hat v_k }^2]\leq 2(\frac{4\sqrt{r} + 10\alpha}{n})^2 \normfro{\X_k-\bar\X_k}^4  + 8 M^2\beta_k^4 D^4.
   \end{align*}

   	Thirdly, invoking \cref{lem:bound_of_k_k+1} yields
   	\begin{align*}
   	\normfro{\bar x_k - \bar{ x}_{k+1}}
   	 \leq \frac{1}{1-2 \delta_1^2}\left[ \frac{10 \alpha}{n} \normfro{\X_k-\bar\X_k}^2 +    2M \beta_k^2 D^2 + \beta_k\normfro{\hat v_k}\right].
   	\end{align*}
   	Hence, it follows that
   	\begin{align*}
   		&\quad a_3\leq  \frac{2}{(1-2\delta_1^2)^2}\left[ \frac{10 \alpha}{n} \normfro{\X_k-\bar\X_k}^2 +    2M \beta_k^2 D^2\right]^2  + \frac{2}{(1-2\delta_1^2)^2} \beta_k^2\E_k\normfro{\hat v_k}^2\\
   		&= \frac{2}{(1-2\delta_1^2)^2}\left[ \frac{10 \alpha}{n} \normfro{\X_k-\bar\X_k}^2 +   2M \beta_k^2 D^2 \right]^2 + \frac{2}{(1-2\delta_1^2)^2} \beta_k^2\E_k\normfro{\hat v_k - \hat g_k}^2 +  \frac{2}{(1-2\delta_1^2)^2} \beta_k^2\normfro{\hat g_k}^2\\
   		& \stackrel{(i)}{=} \frac{2}{(1-2\delta_1^2)^2}\left[ \frac{10 \alpha}{n} \normfro{\X_k-\bar\X_k}^2 +    2M \beta_k^2 D^2 \right]^2 + \frac{2}{(1-2\delta_1^2)^2 n^2} \beta_k^2\sum_{i=1}^n\E_k\normfro{  v_{i,k} -   g_{i,k}}^2+  \frac{2}{(1-2\delta_1^2)^2} \beta_k^2\normfro{\hat g_k}^2\\
   		&\stackrel{(ii)}{\leq}  \frac{4}{(1-2\delta_1^2)^2}\left[ \frac{100 \alpha^2}{n^2} \normfro{\X_k-\bar\X_k}^4 +   4M^2 \beta_k^4 D^4 \right]  + \frac{2}{(1-2\delta_1^2)^2 n} \beta_k^2\Xi^2+  \frac{2}{(1-2\delta_1^2)^2} \beta_k^2\normfro{\hat g_k}^2,   	\end{align*}	
where   (i) and (ii) hold by  the independence of $v_{i,k}$ and bounded variance of \cref{assum:stochastic-grad}, respectively.
   Therefore, by combining $a_1,a_2,a_3$ with \eqref{ineq:derivation_lem_conv_rgd_01} implies that
   \begin{align*}
   	&\quad \E_kf(\bar x_{k+1})\leq  f(\bar{x}_k)  - \frac{\beta_k}{2}\normfro{\hat g_k}^2 - \frac{\beta_k}{4}\normfro{\grad f(\bar x_k)}^2 + \frac{\beta_{k}}{2} {a_1}
   +\frac{1}{ \beta_k} {a_2}+  \frac{L_g}{2} {a_3}\\
   &\leq  f(\bar{x}_k)  - (\frac{\beta_k}{2} -  \frac{L_g  \beta_k^2}{(1-2\delta_1^2)^2})\normfro{\hat g_k}^2 - \frac{\beta_k}{4}\normfro{\grad f(\bar x_k)}^2 + \frac{\beta_k L_G^2}{2n}\normfro{\X_k-\bar\X_k}^2 + \frac{2}{\beta_k}(\frac{4\sqrt{r} + 10 \alpha}{n})^2 \normfro{\X_k-\bar\X_k}^4  \\
   &\quad+ 8 M^2\beta_k^3 D^4 +  \frac{2L_g}{(1-2\delta_1^2)^2}\left[ \frac{100 \alpha^2}{n^2} \normfro{\X_k-\bar\X_k}^4 +   4M^2 \beta_k^4 D^4 \right]  + \frac{L_g}{(1-2\delta_1^2)^2 n} \beta_k^2\Xi^2.
   \end{align*}
   By \cref{coro:rate_of_consensus}, we have $\normfro{\X_k-\bar\X_k}^2\leq nCD^2\beta_k^2$. It follows that
   \begin{align*}
   &\quad \E_kf(\bar x_{k+1})\\
   &\leq  f(\bar{x}_k)  - (\frac{\beta_k}{2} -  \frac{L_g  \beta_k^2}{(1-2\delta_1^2)^2})\normfro{\hat g_k}^2 - \frac{\beta_k}{4}\normfro{\grad f(\bar x_k)}^2\\&\quad  + \frac{L_g\Xi^2}{(1-2\delta_1^2)^2 n} \beta_k^2 + \left[ \frac{CD^2 L_G^2}{2}   +  \left(2(4\sqrt{r} + 10\alpha)^2 C^2+ 8 M^2\right)  D^4\right] \beta_k^3\\
   &\quad +  \frac{2L_g}{(1-2\delta_1^2)^2}\left[  100 \alpha^2C^2D^4+   4M^2  D^4 \right]\beta_k^4 \\
   &\leq f(\bar{x}_k)  - \frac{\beta_k }{4} \normfro{\hat g_k}^2  - \frac{\beta_k}{4}\normfro{\grad f(\bar x_k)}^2\\&\quad + \frac{3L_g\Xi^2}{2 n} \beta_k^2 + \left[ \frac{CD^2 L_G^2}{2}   +  \left(2(4\sqrt{r} + 10 \alpha)^2 C^2+ 8 M^2\right)  D^4\right] \beta_k^3\\
   &\quad +  \left( 201\alpha^2C^2D^4+  9M^2  D^4\right) L_g\beta_k^4,
   \end{align*}
   where we use $\frac{1}{(1-2\delta_1^2)^2}\leq 1.002$ and $\beta_k\leq \frac{1}{5L_g}$ in the last inequality.
The proof is completed.
\end{proof}

\begin{proof}[\textbf{Proof of \cref{thm:convergence of alg 2}}]
	Using \eqref{ineq:main-recursion} implies
	\be \bad
&\quad \E_kf(\bar x_{k+1})\\
&\leq  f(\bar{x}_k) - \frac{\beta_k}{4}\normfro{\grad f(\bar x_k)}^2+ \frac{3L_g\Xi^2}{2 n} \beta_k^2 +   (\frac{CD^2 L_g^2}{2} + \cT_1 D^4)\beta_k^3   +  \cT_2 L_gD^4\beta_k^4,
\ead\ee
 
	Taking the expectation on all $k$ and 
	telescoping the right hand side give us for any $K>0$ 
	\begin{align*}
	\sum_{k=0}^{K}\frac{\beta_k}{4} \E\normfro{\grad f(\bar x_k)}^2 \leq   f(\bar x_0) - f^* +  \frac{3L_g\Xi^2}{2 n} \sum_{k=0}^{K}\beta_k^2 +   (\frac{CD^2 L_g^2}{2} + \cT_1 D^4)\sum_{k=0}^{K}\beta_k^3    + \cT_2 L_gD^4\sum_{k=0}^{K}\beta_k^4,
	\end{align*}
	where $f^* = \min_{x\in\St(d,r)} f(x)$. Dividing both sides by $\sum_{k=0}^{K}\frac{\beta_k}{4}$ yields 
	\begin{align*}
	\min_{k=0,\ldots,K} \E\normfro{\grad f(\bar x_k)}^2 \leq  \frac{ f(\bar x_0) - f^* +  \frac{3L_g\Xi^2}{2 n} \sum_{k=0}^{K}\beta_k^2 +   (\frac{CD^2 L_g^2}{2} + \cT_1 D^4)\sum_{k=0}^{K}\beta_k^3    + \cT_2 L_gD^4\sum_{k=0}^{K}\beta_k^4}{ 	\sum_{k=0}^{K}\frac{\beta_k}{4}}.
	\end{align*}
   Let $\tilde \beta = \min\{1/L_g,\frac{1-\rho_t}{D}\}$. 
	Noticing that $\beta_k= \mathcal{O}( \min\{\frac{1-\rho_t}{D}, \frac{1}{L_G}\}\cdot\frac{1}{k} )$, $ \frac{\sum_{k=0}^{K}\beta_k^2}{ \sum_{k=0}^{K}\beta_k} = \mathcal{O}( \tilde \beta \frac{\ln(K+1))}{   \sqrt{K+1}})$,   $\frac{\sum_{k=0}^{K}\beta_k^3}{ \sum_{k=0}^{K}\beta_k}= \mathcal{O}(  \frac{\tilde \beta^2}{   \sqrt{K+1}})$ and  $ \frac{\sum_{k=0}^{K}\beta_k^4}{ \sum_{k=0}^{K}\beta_k}= \mathcal{O}(  \frac{\tilde \beta^3}{   \sqrt{K+1}})$. The proof is completed.
	\end{proof}
	The following corollary follows \cite{lian2017can}, in which the convergence results of constant stepsize $\beta_k$ is given.
	\begin{corollary}\label{coro:DRSGD constant stepsize}
	Under \cref{assump:lips in Rn,assump:doubly-stochastic,assum:stochastic-grad,assump:stepsize_beta}, suppose $\X_k\in\N$,   $t\geq \lceil\log_{\sigma_2}(\frac{1}{2\sqrt{n}})\rceil$,  $0<\alpha\leq\bar\alpha$.  
	If constant stepsize $\beta_k\equiv\beta=\frac{1}{2L_G+\Xi \sqrt{(K+1)/n}}$, where
\[ K+1\geq \max\{ \frac{n}{\Xi^2} (\max\{3L_G, \frac{5D}{\alpha \delta_1},  \frac{D\delta_1}{1-\rho_t}\})^2, \frac{n^3}{\Xi^6}\left( \frac{ CD^2 L_g^2 + (2\cT_1 +  \cT_2)  D^4}{2(f(\bar x_0) - f^*)+ 3L_G} \right)^2\} ,\]
if follows that 
	\begin{align*}
	 \min_{k=0,\ldots,K} \E\normfro{\grad f(\bar x_k)}^2\leq   \frac{ 8L_G(f(\bar x_0) - f^*)}{K+1} +\frac{ 8(f(\bar x_0) - f^*+ \frac{3L_G}{2})\Xi}{\sqrt{n(K+1)}}.
	\end{align*}
	\end{corollary}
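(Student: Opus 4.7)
The plan is to derive this from the one-step descent inequality \eqref{ineq:main-recursion} of \cref{lem:decrease lemma} specialized to a constant stepsize $\beta_k \equiv \beta$. First I would telescope \eqref{ineq:main-recursion} from $k=0$ to $K$: since the $\frac{\beta}{4}\normfro{\hat g_k}^2$ term is nonnegative it can be dropped, and the remaining terms sum to
\[
\sum_{k=0}^K \frac{\beta}{4}\E\normfro{\grad f(\bar x_k)}^2 \le f(\bar x_0) - f^* + (K{+}1)\left[\frac{3L_g\Xi^2}{2n}\beta^2 + \Big(\tfrac{CD^2L_G^2}{2}+\cT_1 D^4\Big)\beta^3 + \cT_2 L_g D^4 \beta^4\right].
\]
Dividing by $(K{+}1)\beta/4$ and using $\min_k \le \tfrac{1}{K+1}\sum_k$ gives
\[
\min_{k\le K}\E\normfro{\grad f(\bar x_k)}^2 \le \frac{4(f(\bar x_0)-f^*)}{\beta(K{+}1)} + \frac{6L_g\Xi^2}{n}\beta + 4\Big(\tfrac{CD^2L_G^2}{2}+\cT_1 D^4\Big)\beta^2 + 4\cT_2 L_g D^4 \beta^3.
\]

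Next I would substitute $\beta = \tfrac{1}{2L_G + \Xi\sqrt{(K+1)/n}}$. The first term splits cleanly as $\tfrac{4(f(\bar x_0)-f^*)(2L_G)}{K+1} + \tfrac{4(f(\bar x_0)-f^*)\Xi}{\sqrt{n(K+1)}}$, producing the desired $\tfrac{8L_G(f(\bar x_0)-f^*)}{K+1}$ contribution. For the variance term I would use $\beta \le \tfrac{1}{\Xi\sqrt{(K+1)/n}}$, yielding $\tfrac{6L_g\Xi}{\sqrt{n(K+1)}}$; combining with the first part and bounding $L_g \le L_G$ produces the claimed coefficient $8(f(\bar x_0)-f^* + \tfrac{3L_G}{2})\Xi/\sqrt{n(K+1)}$.

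The remaining work is to absorb the $\beta^2$ and $\beta^3$ residuals. Using $\beta \le \sqrt{n/[\Xi^2(K+1)]}$ one has $\beta^2 \le n/[\Xi^2(K+1)]$ and $\beta^3 \le n^{3/2}/[\Xi^3(K+1)^{3/2}]$; each term is thereby of order $\tfrac{1}{\sqrt{n(K+1)}}$ times a factor involving $\tfrac{n^{3/2}}{\Xi^3\sqrt{K+1}}$ or smaller. Imposing the stated lower bound $K+1 \ge (n^3/\Xi^6)\bigl(\tfrac{CD^2L_g^2 + (2\cT_1+\cT_2)D^4}{2(f(\bar x_0)-f^*)+3L_G}\bigr)^2$ is precisely what makes these residuals no larger than the dominant $\tfrac{8(f(\bar x_0)-f^*+3L_G/2)\Xi}{\sqrt{n(K+1)}}$ term, which yields the bound.

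The main obstacle, beyond bookkeeping, will be verifying that the chosen constant $\beta$ actually satisfies the hypotheses of \cref{lem:decrease lemma}, namely $\beta \le \min\{1/(5L_g),\ \alpha\delta_1/(5D),\ (1-\rho_t)\delta_1/D\}$, so that \cref{lem:convergence_of_deviation from mean} applies and $\X_k\in\N$ for every $k$. This is precisely the role of the first part of the lower bound on $K{+}1$: $K+1 \ge \tfrac{n}{\Xi^2}(\max\{3L_G,5D/(\alpha\delta_1),D\delta_1/(1-\rho_t)\})^2$ guarantees $\Xi\sqrt{(K+1)/n} \ge 3L_G$ etc., so that $\beta \le 1/(3L_G) \le 1/(5L_g)$ (after confirming the relation between $L_g$ and $L_G$), $\beta \le \alpha\delta_1/(5D)$, and $\beta \le (1-\rho_t)\delta_1/D$. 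Once these are checked, the descent lemma applies for all $k\le K$ and the telescoping/substitution argument above completes the proof.
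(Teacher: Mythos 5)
Your proposal is correct and takes essentially the same route as the paper: the paper simply invokes \cref{thm:convergence of alg 2} with constant $\beta_k\equiv\beta$ (which is exactly the telescoped form of \cref{lem:decrease lemma} you write down), substitutes $\beta=1/(2L_G+\Xi\sqrt{(K+1)/n})$, uses the first part of the lower bound on $K+1$ to verify the stepsize conditions so that $\X_k\in\N$ for $k\le K$, and uses the second part to bound the $\beta^2$ and $\beta^3$ residuals by the $\Xi/\sqrt{n(K+1)}$ term, which doubles its coefficient from $4$ to $8$. The only cosmetic difference is that you re-derive the intermediate rate bound from the descent lemma rather than citing the already-established theorem, and your final absorption step should be phrased as bounding the residuals by the coefficient-$4$ term (so that $4+4=8$) rather than by the coefficient-$8$ term.
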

	\begin{proof}
	Since $K+1\geq \frac{n}{\Xi^2} (\max\{3L_G, \frac{5D}{\alpha \delta_1}, \frac{D\delta_1}{1-\rho_t}\})^2$, we have 
	\[ \beta_k \leq \min\{\frac{1}{5L_G}, \frac{\alpha\delta_1}{5D},\frac{1-\rho_t}{D}\delta_1\}\] for all $k=0,1,\ldots,K$. Therefore, it follows that $\X_k\in\N$ for $k=0,1,\ldots,K$.
 Using \cref{thm:convergence of alg 2}, we have 
		\begin{align}
	&\quad \min_{k=0,\ldots,K} \E\normfro{\grad f(\bar x_k)}^2\notag \\
	&\leq  \frac{ 4(f(\bar x_0) - f^*)}{(K+1)\beta} +\frac{6L_g \beta \Xi^2}{n}+  (2 CD^2 L_g^2 + 4\cT_1 D^4)\beta^2    + 4\cT_2 L_gD^4 \beta^3\notag\\
	&\leq \frac{ 8L_G(f(\bar x_0) - f^*)}{K+1} +\frac{ 4(f(\bar x_0) - f^*)\Xi}{\sqrt{n(K+1)}} +\frac{6L_G  \Xi^2}{2nL_G + \Xi \sqrt{n(K+1)}}+  \frac{2 CD^2 L_g^2 + (4\cT_1 + 2\cT_2) D^4}{( 2L_G+\Xi \sqrt{(K+1)/n})^2}\label{ineq:corollary-constant-0}\\
	&\leq  \frac{ 8L_G(f(\bar x_0) - f^*)}{K+1} +\frac{ 4(f(\bar x_0) - f^*+ \frac{3L_G}{2})\Xi}{\sqrt{n(K+1)}} +  \frac{2 nCD^2 L_g^2 + (4\cT_1 + 2\cT_2) n D^4}{  \Xi^2 (K+1)},\label{ineq:corollary-constant-1}
	\end{align}
	where we use $\beta\leq \frac{1}{2L_G}\leq \frac{1}{2L_g}$ in  \eqref{ineq:corollary-constant-0}. 
	
	When 
	\[ K+1\geq \frac{n^3}{\Xi^6}\left( \frac{ CD^2 L_g^2 + (2\cT_1 +  \cT_2)  D^4}{2(f(\bar x_0) - f^*)+ 3L_G} \right)^2 ,\] 
	the second term in \eqref{ineq:corollary-constant-1} is greater than the third term, we get
		\begin{align*}
	&\quad \min_{k=0,\ldots,K} \E\normfro{\grad f(\bar x_k)}^2\notag \\
	&\leq  \frac{ 8L_G(f(\bar x_0) - f^*)}{K+1} +\frac{ 8(f(\bar x_0) - f^*+ \frac{3L_G}{2})\Xi}{\sqrt{n(K+1)}},
	\end{align*}
	which completes the proof. 
\end{proof}


\section{Proofs for \cref{sec:gradient-tracking}}
In this section, we use the following notations 
{\small\[ \bG_k := \begin{bmatrix}
\grad f_1(x_{1,k}) \\ 
\vdots \\ 
\grad f_n(x_{n,k})
\end{bmatrix},\ 
\Y_k=\begin{bmatrix}
y_{1,k} \\ 
\vdots \\ 
y_{n,k}
\end{bmatrix}, \  
 \hat y_k: =  \frac{1}{n}\sum_{i=1}^n  y_{i,k},
\]}
  \[ \hat g_k: = \frac{1}{n}\sum_{i=1}^n \grad f_i(x_{i,k}), \quad  \hat\bG_k:=(\mathbf{1}_n\otimes I_n) \hat g_k. \]   
\begin{proof}[\textbf{Proof of \cref{lem:uniform bound y}}]
	We prove it by induction. 
	Let $\hat g_{-1} = \hat y_0$, one  has $\normfro{y_{i,0}}\leq D$ and  \[ \normfro{y_{i,0}-\hat g_{-1}}\leq \normfro{y_{i,0} } + \normfro{\hat g_{-1}}\leq D + \frac{1}{n}\sum_{j=1}^n \normfro{y_{j,0}}\leq 2D \] for all $i\in[n]$ by \cref{assump:lips in Rn}. Suppose  for some $k\geq 0$, it follows that $\normfro{y_{i,k}}\leq 2D+L_G$ and  $\normfro{y_{i,k}-\hat g_{k-1}}\leq   2D+L_G$.  
	
	We note that the bound of $v_i$ becomes $2D+L_G$ here since $\normfro{v_{i,k}} = \normfro{\p_{T_{x_{i,k}\M}} y_{i,k}}\leq \normfro{y_{i,k}}$. 	Following the same argument in the proof of \cref{lem:convergence_of_deviation from mean}, we get $\X_{k+1}\in\N$ since $0<\alpha\leq\min\{ \frac{\Phi}{ 2L_t}, 1,\frac{1}{M}\}$ and $0\leq\beta\leq \min\{ \frac {1-\rho_t}{ L_G+2D} \delta_1, \frac { \alpha\delta_1  }{5(L_G+2D)}\}$.

  Then, we have
	\begin{align*}
	\normfro{ y_{i,k+1} - \hat g_{k}} &= \normfro{ \sum_{j=1}^n W^t_{i,j}y_{j,k}-  \hat g_{k}  + \grad f(x_{i,k+1}) - \grad f(x_{i,k})}\\
	&=\normfro{ \sum_{j=1}^n (W^t_{i,j}-\frac{1}{n})(y_{j,k} - \hat g_{k-1}) + \grad f(x_{i,k+1}) - \grad f(x_{i,k})}\\
	&\stackrel{\eqref{ineq:lips_riemanniangrad}}{\leq} \sigma_2^t \sqrt{n} \normfro{y_{j,k} - \hat g_{k-1}} + L_G\normfro{x_{i,k+1} - x_{i,k}}\\
	&\stackrel{\eqref{ineq:ret_nonexpansive}}{\leq}   \sigma_2^t \sqrt{n} \normfro{y_{j,k} - \hat g_{k-1}} + L_G (\alpha \normfro{\grad \varphi_i^t(\X_{k})} + \beta \normfro{y_{i,k}})\\
	&\stackrel{\eqref{ineq:bound-of-gradh-i}}{\leq}  \frac{1}{2}\normfro{y_{j,k} - \hat g_{k-1}}  +2 L_G \alpha  \delta_2 + L_G\beta \normfro{y_{i,k}}\\
	&\leq \frac{1}{2 }(2D+L_G) +    2\delta_2  L_G+ \frac{L_G}{5}\delta_1\alpha\\
	&\stackrel{\eqref{delta_1_and_delta_2}}{\leq} D+ L_G.
	\end{align*}
	Hence, $\normfro{y_{i,k+1}} \leq \normfro{ y_{i,k+1} - \hat g_{k}} + \normfro{\hat g_k} \leq {L_G} + 2D$, where we use $\normfro{\hat g_k}\leq D$.  	Therefore, we get $\normfro{y_{i,k}}\leq L_g+ 2D$ for all $i,k$ and $\X_k\in \N$.

 Using the same argument of  \cref{coro:rate_of_consensus}, there exists  some $C_1=\mathcal{O}(\frac{1}{(1-\rho_t)^2})$ that is independent of  $L_G$ and $D$ such that 
 \be \frac{1}{n}\normfro{\X_k-\bar\X_k}^2\leq  C_1 (L_G + 2D)^2\beta^2, k \geq 0.\ee      The proof is completed. 
\end{proof}

Next, we present the relations between the consensus error and the gradient tracking error.  
\begin{lemma}\label{lem:error-bounds-gradient-tracking}
	Under the same conditions of \cref{lem:uniform bound y}, 
	one has the following error bounds for any $k\geq 0$:
	\begin{enumerate}
		\item Successive gradient error: \be\label{ineq: Successive gradient error} \normfro{\bG_{k+1} - \bG_k} \leq 2\alpha  L_G\normfro{\X_k - \bar\X_k} +  \beta L_G\normfro{\Y_k}. \ee
		\item Successive tracking error:   \be\label{ineq: Successive tracking error} \normfro{ \Y_{k+1} - \hat \bG_{k+1} }  \leq  \sigma_2^t \normfro{\Y_k - \hat \bG_k}+  \normfro{\bG_{k+1}- \bG_k }. \ee	
		\item Successive consensus error: for  $\rho_t=\sqrt{1-\gamma_t\alpha }\in(0,1)$, 
		\be\label{ineq:Successive consensus error}
		\normfro{\X_{k+1}-\bar{\X}_{k+1}}\leq \rho_t\normfro{\X_k - \bar \X_k} + \beta \normfro{\Y_k}.
	  \ee
		\item Associating $\Y_k, \hat\bG_k$ with above items: 
		\be\label{ineq:Bound-of-Y} \normfro{\Y_{k}} \leq   \normfro{\Y_{k} -\hat\bG_{k}} + \normfro{\hat\bG_{k}} . \ee
	\end{enumerate}
\end{lemma}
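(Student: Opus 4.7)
The plan is to handle the four bounds in the order they appear, since (1) feeds into (2). For (1), I would bound $\normfro{\X_{k+1} - \X_k}$ first and then use the Lipschitz property of the Riemannian gradient. Since the polar retraction is non-expansive in the sense of \eqref{ineq:ret_nonexpansive}, applying it componentwise with $y = x_{i,k}$ gives $\normfro{x_{i,k+1} - x_{i,k}} \leq \normfro{-\alpha\grad\varphi_i^t(\X_k) - \beta v_{i,k}}$, so after squaring and summing
\begin{equation*}
\normfro{\X_{k+1}-\X_k} \leq \alpha\normfro{\grad\varphi^t(\X_k)} + \beta\normfro{\V_k} \leq 2\alpha\normfro{\X_k-\bar\X_k} + \beta\normfro{\Y_k},
\end{equation*}
using $L_t \le 2$ together with \eqref{ineq:bound-of-gradh} and the trivial bound $\normfro{\V_k} \leq \normfro{\Y_k}$ coming from $v_{i,k} = \p_{\T_{x_{i,k}}\M} y_{i,k}$. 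Multiplying by $L_G$ via \eqref{ineq:lips_riemanniangrad} delivers (1).

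For (2), I would stack the tracking recursion as $\Y_{k+1} = (W^t\otimes I_d)\Y_k + \bG_{k+1} - \bG_k$. A simple induction on the algorithm shows $\hat y_k = \hat g_k$ for all $k$, hence $\hat\bG_k = (\mathbf{1}_n \otimes I_d)\hat y_k$ lies in the consensus subspace and is fixed by $W^t\otimes I_d$. Subtracting it and rearranging yields
\begin{equation*}
\Y_{k+1} - \hat\bG_{k+1} = (W^t\otimes I_d)(\Y_k - \hat\bG_k) + (\bG_{k+1} - \bG_k) - (\hat\bG_{k+1} - \hat\bG_k).
\end{equation*}
Since $\Y_k - \hat\bG_k$ is in the orthogonal complement of the consensus subspace, the first term contracts by $\sigma_2^t$ (by \cref{assump:doubly-stochastic}). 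The second and third terms together form the projection of $\bG_{k+1} - \bG_k$ onto the complement of the consensus subspace, whose norm is at most $\normfro{\bG_{k+1} - \bG_k}$. Applying the triangle inequality gives (2).

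Bound (3) is essentially already proven inside \cref{lem:recursive lemma}; I would simply recall the chain of inequalities \eqref{ineq:de-1}--\eqref{ineq:de-3}, with $\normfro{\V_k}$ at the last step replaced by the upper bound $\normfro{\Y_k}$. Bound (4) is a one-line triangle inequality, $\normfro{\Y_k} = \normfro{(\Y_k - \hat\bG_k) + \hat\bG_k} \leq \normfro{\Y_k - \hat\bG_k} + \normfro{\hat\bG_k}$. I do not expect any serious obstacle here; the only subtlety is being explicit that the tracking invariant $\hat y_k = \hat g_k$ makes $\hat\bG_k$ live in the consensus subspace so that the mixing contraction $\sigma_2^t$ applies cleanly in (2), and that the polar-retraction non-expansiveness \eqref{ineq:ret_nonexpansive} can be invoked precisely because $-\alpha\grad\varphi_i^t(\X_k) - \beta v_{i,k}$ lies in $\T_{x_{i,k}}\M$ (which is why the projection step 4 of \cref{alg:DRG_GT} is needed).
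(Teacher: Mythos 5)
Your proposal is correct and follows essentially the same route as the paper: Lipschitz continuity of $\grad f_i$ from \cref{lem:lipschitz} combined with retraction non-expansiveness and \eqref{ineq:bound-of-gradh} for (1), the mixing contraction on the orthogonal complement of the consensus subspace for (2) (the paper phrases this via the projector identity $\Y_{k+1}-\hat\bG_{k+1} = ((I_n-J)\otimes I_n)\Y_{k+1}$ with $J=\frac{1}{n}\mathbf{1}_n\mathbf{1}_n^\top$, which is an equivalent rearrangement of your decomposition), reuse of \cref{lem:recursive lemma} with $\normfro{\V_k}\leq\normfro{\Y_k}$ for (3), and the triangle inequality for (4). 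No gaps.
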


\begin{proof}[\textbf{Proof of \cref{lem:error-bounds-gradient-tracking}}]
	By \cref{lem:uniform bound y}, we know $\X_k\in\N$ for all $k\geq 0$. 
	\begin{enumerate}
		\item Using \cref{lem:lipschitz} yields
		\begin{align*}
		\normfro{\bG_{k+1} -\bG_k}\leq L_G\normfro{\X_{k+1} - \X_{k}}.
		\end{align*}
		By   \cref{lem:nonexpansive_bound_retraction}, it follows that
		\[	\normfro{\X_k-\X_{k+1}}\leq \alpha\normfro{\grad \varphi^t(\X_k)}+ \beta \normfro{\V_k}\stackrel{\eqref{ineq:bound-of-gradh}}{\leq}  2\alpha\normfro{\X_k - \bar\X_k} + \beta\normfro{\Y_k},\] where we use $\normfro{\V_k} \leq \normfro{\Y_k}$.  Hence, the inequality \eqref{ineq: Successive gradient error} is proved. 
		\item Denote $J = \frac{1}{n}\mathbf{1}_n\mathbf{1}_n^\top$. 
		Note that 
		\begin{align*}
		\Y_{k+1} - \hat \bG_{k+1} &= ((I_n-J)\otimes I_n)\Y_{k+1} \\
		&= ((I_n-J)\otimes I_n)\left [ (W^t\otimes I_n)\Y_k  + \bG_{k+1} -\bG_k \right]\\
		& =   ((W^t-J)\otimes I_n)\Y_k  +  ((I_n-J)\otimes I_n) (\bG_{k+1} -\bG_k )
		\end{align*}
		where we use $((I_n-J)\otimes I_n) (W^t\otimes I_n) = (W^t-J)\otimes I_n$. 
		It follows that
		\begin{align*}
		\normfro{\Y_{k+1} - \hat \bG_{k+1}}   \leq \sigma_2^t\normfro{\Y_k - \hat \bG_k}+ \normfro{\bG_{k+1} -\bG_k}
		\end{align*}
		\item Note that $\normfro{\V_k}\leq \normfro{\Y_k}$. Then the desired result follows the same line as that of \cref{lem:recursive lemma}.
		\item  This follows   from the triangle inequality. 
	\end{enumerate}
\end{proof}

To show \cref{thm:theorem-convergence-grad-tracking}, we firstly show a descent lemma.  Note that an extra $\normfro{\hat \bG_k}^2 = n\normfro{\hat g_k}^2$ appears  in \eqref{ineq:Bound-of-Y},   what is  we aim at bounding in the optimization problem \eqref{opt_problem}.  By combining with the following lemmas, we can quickly obtain the final convergence result. 
\begin{lemma}\label{lem:descent-grad-tracking}
	Under the same conditions of \cref{lem:uniform bound y}, it follows that 
	\be\label{ineq:main-gradient-tracking}
	\bad
	f(\bar x_{k+1})&\leq f(\bar{x}_k)  -(\beta -  4L_G \beta^2 ) \normfro{\hat g_k}^2 + \G_0 \frac{ L_G}{n} \normfro{\X_{k+1}-\bar\X_{k+1}}^2  + \G_1 \frac{ L_G}{n} \normfro{\X_k-\bar\X_k}^2    + \G_2\frac{ L_G}{n}\beta^2\normfro{\Y_k}^2  ,
	\ead
	\ee 
	where $\G_0=\frac{4r(L_G + 2D)^2C_1}{L_G^2} ,  $	$\G_1=  1+\G_0  + \frac{2 D\alpha +8MD\alpha^2}{L_G} +13C_1 \delta_1^2 \alpha^4 $, $\G_2 = \frac{2MD}{L_G} + \frac{\delta_1^2}{2}+  5$
	and  $C_1$ is given in \cref{lem:uniform bound y}. 
\end{lemma}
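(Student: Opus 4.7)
I start with the Lipschitz-type inequality \eqref{ineq:lipschitz} applied at $\bar x_k,\bar x_{k+1}$, namely
\[
f(\bar x_{k+1})\le f(\bar x_k)+\inp{\grad f(\bar x_k)}{\bar x_{k+1}-\bar x_k}+\frac{L_g}{2}\normfro{\bar x_{k+1}-\bar x_k}^2.
\]
The key structural input from DRGTA is the tracking identity $\hat y_k=\hat g_k$ established in \cref{sec:gradient-tracking}, which exposes $-\beta\hat g_k$ inside the averaged update. Setting $\Delta_k:=\bar x_{k+1}-\bar x_k+\beta\hat g_k$, I would decompose $\inp{\grad f(\bar x_k)}{\bar x_{k+1}-\bar x_k}=-\beta\normfro{\hat g_k}^2-\beta\inp{\grad f(\bar x_k)-\hat g_k}{\hat g_k}+\inp{\grad f(\bar x_k)}{\Delta_k}$ and apply Young's inequality to the two cross-terms, using $\normfro{\grad f(\bar x_k)-\hat g_k}^2\le L_G^2\normfro{\X_k-\bar\X_k}^2/n$ (Jensen plus \eqref{ineq:lips_riemanniangrad}) and $\normfro{\grad f(\bar x_k)}^2\le 2\normfro{\hat g_k}^2+2L_G^2\normfro{\X_k-\bar\X_k}^2/n$. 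Splitting $\frac{L_g}{2}\normfro{\bar x_{k+1}-\bar x_k}^2\le L_G\beta^2\normfro{\hat g_k}^2+L_G\normfro{\Delta_k}^2$ via $(a+b)^2\le 2a^2+2b^2$ and $L_g\le L_G$, and choosing the Young weights so that each source contributes the right amount of slack, the cumulative coefficient of $\normfro{\hat g_k}^2$ collects to exactly $-(\beta-4L_G\beta^2)$.

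The bulk of the proof is bounding $\normfro{\Delta_k}$. I would split it as
\[
\Delta_k=(\bar x_{k+1}-\hat x_{k+1})+(\hat x_{k+1}-\hat x_k+\beta\hat v_k)+\beta(\hat g_k-\hat v_k)+(\hat x_k-\bar x_k),
\]
with $\hat v_k:=\frac{1}{n}\sum_i v_{i,k}$. The two IAM-vs.-Euclidean-mean gaps are each bounded by $\frac{2\sqrt{r}}{n}\normfro{\X-\bar\X}^2$ via \eqref{key}; the Euclidean residual is handled exactly as in \cref{lem:bound_of_k_k+1}, combining the second-order retraction bound \eqref{ineq:ret_second-order} with the consensus-gradient bound \eqref{ineq:bound-of-sum-gradh} to produce a linear-in-$\normfro{\X_k-\bar\X_k}^2$ piece and an $M\beta^2\normfro{\Y_k}^2/n$ piece; and the tangent-space gap reorganizes via $\hat g_k-\hat v_k=\hat y_k-\hat v_k=\frac{1}{n}\sum_i\p_{N_{x_{i,k}}\M}y_{i,k}$ (using $\hat y_k=\hat g_k$ and $v_{i,k}=\p_{\T_{x_{i,k}}\M}y_{i,k}$), so that $\normfro{\beta(\hat g_k-\hat v_k)}^2\le\beta^2\normfro{\Y_k}^2/n$ by Cauchy--Schwarz. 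Squaring $\Delta_k$ produces quartic norms $\normfro{\X_k-\bar\X_k}^4$ and $\normfro{\X_{k+1}-\bar\X_{k+1}}^4$ from the IAM gaps, which I would linearize by substituting the a priori estimate \eqref{ienq:bound_x_gt} of \cref{lem:uniform bound y}, $\normfro{\X_k-\bar\X_k}^2/n\le C_1(L_G+2D)^2\beta^2$, into exactly one of the two factors; this is what produces the factor $(L_G+2D)^2C_1/L_G^2$ appearing in $\G_0$.

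Finally I would collect coefficients. The squared $(k{+}1)$-side IAM gap contributes $\G_0=4r(L_G+2D)^2C_1/L_G^2$ in front of $\frac{L_G}{n}\normfro{\X_{k+1}-\bar\X_{k+1}}^2$; the Lipschitz cross-term, the $k$-side IAM gap, the consensus-gradient piece from $\alpha\sum_i\grad\varphi_i^t(\X_k)$ and the retraction error aggregate into $\G_1\frac{L_G}{n}\normfro{\X_k-\bar\X_k}^2$; and the retraction second-order term (via $\normfro{\V_k}\le\normfro{\Y_k}$) together with the tangent-space gap build $\G_2\frac{L_G}{n}\beta^2\normfro{\Y_k}^2$, whose $2MD/L_G+\delta_1^2/2+5$ shape encodes the retraction constant $M$, the consensus-region radius $\delta_1$, and the Cauchy--Schwarz absorptions used along the way. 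The main obstacle is the careful placement of the substitution \eqref{ienq:bound_x_gt}: applying it to exactly one factor of each quartic IAM term is what keeps $\G_0,\G_1,\G_2$ independent of $\beta$ while preserving a quadratic dependence on $\normfro{\X-\bar\X}$; substituting on both factors would bury a $\beta^2$ in the wrong place and corrupt the $\G_0$ coefficient, while skipping the substitution would leave irreducible quartic residuals that the stated bound cannot absorb.
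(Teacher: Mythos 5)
Your plan follows essentially the same route as the paper's proof: the Lipschitz descent inequality at $\bar x_k$, exposing $-\beta\hat g_k$ via the tracking identity $\hat y_k=\hat g_k$, splitting the displacement into IAM-vs.-Euclidean-mean gaps (bounded by \eqref{key}), the retraction second-order residual, the normal-space components $y_{i,k}-v_{i,k}$, and the consensus-gradient average, and then linearizing the resulting quartic terms with the a priori bound \eqref{ienq:bound_x_gt} to obtain $\G_0$. The only divergence is organizational — the paper pairs the retraction residual with $\hat g_k$ linearly via $\normfro{\hat g_k}\le D$ rather than squaring the whole $\Delta_k$ with weight $1/(L_G\beta^2)$, which is what keeps the $2MD/L_G$ term in $\G_2$ at that size; your wholesale squaring would land on slightly larger constants, but the argument and all essential ideas are the same.
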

Since $D=\max_{x\in\St(d,r)}\normfro{\nabla f(x)}\leq \sqrt{r}\cdot \max_{x\in\St(d,r)}\normtwo{\nabla f(x)}=\sqrt{r}L_n.$
By the choice of $\alpha$, the constants in \cref{lem:descent-grad-tracking} are given by $\G_0 = \mathcal{O}(r^2C_1)$ , $\G_1 = \mathcal{O}(r^2C_1)$ and $\G_2 = \mathcal{O}(M)$. 

\begin{proof}[\textbf{Proof of \cref{lem:descent-grad-tracking}}] 
It follows from \cref{lem:lipschitz} that
\be\label{ineq:grad_tracking_der-0}  \normfro{\hat{g}_k -\grad f(\bar x_k)  }^2 \leq \frac{1}{n} \sum_{i=1}^n \normfro{\grad f_i(x_{i,k}) - \grad f(\bar x_k)}^2\leq \frac{L_G^2}{n}\normfro{\X_k - \bar \X_k}^2.\ee
	By invoking \cref{lem:lipschitz} and noting $L_g\leq L_G$, we also have
\be\label{ineq:grad_tracking_der-01}\begin{aligned}
f(\bar x_{k+1}) &\leq f(\bar{x}_k) +  \inp{\grad f(\bar x_k)}{\bar x_{k+1} - \bar x_k} +  \frac{L_g}{2}\normfro{\bar x_{k+1} - \bar x_k}^2\\ 
&\leq  f(\bar{x}_k)+ \inp{\hat g_k}{  \bar x_{k+1} - \bar x_k} + \inp{\grad f(\bar x_k)-\hat g_k}{ \bar x_{k+1} - \bar x_k} +  \frac{L_G}{2} \normfro{\bar x_{k+1} - \bar x_k}^2\\
&\leq f(\bar{x}_k)+ \inp{\hat g_k}{\bar x_{k+1} - \bar x_k} + \frac{1}{L_G}  \normfro{\grad f(\bar x_k)-\hat g_k}^2 +   \frac{3L_G}{4} \normfro{\bar x_{k+1} - \bar x_k}^2\\
&\stackrel{\eqref{ineq:grad_tracking_der-0}}{\leq } f(\bar{x}_k) + \inp{\hat g_k}{\bar x_{k+1} - \bar x_k}+ \frac{ L_G}{ n}\normfro{\X_k-\bar\X_k}^2 +    \frac{3L_G}{4}  \normfro{\bar x_{k+1} - \bar x_k}^2\\
&=  f(\bar{x}_k) + \inp{\hat g_k}{\hat x_{k+1}  - \hat x_k} +\inp{\hat g_k}{\bar x_{k+1} -\hat x_{k+1} +\hat x_k - \bar x_k}  
+ \frac{ L_G}{ n}\normfro{\X_k-\bar\X_k}^2 +   \frac{3L_G}{4}  \normfro{\bar x_{k+1} - \bar x_k}^2.
\end{aligned}\ee
	
	Note that for $\beta>0$, we have
	\begin{align*}
	&\quad  \inp{\hat g_k}{\bar x_{k+1} -\hat x_{k+1} +\hat x_k - \bar x_k}  \leq  \frac{\beta ^2 L_G}{2}\normfro{\hat g_k}^2+ \frac{1}{ \beta ^2 L_G}\normfro{\hat x_{k}  - \bar x_k}^2+   \frac{1}{\beta ^2 L_G}\normfro{ \bar x_{k+1} - \hat x_{k+1}}^2.
	\end{align*} 
	 
	Plugging this into \eqref{ineq:grad_tracking_der-01} yields
		\be\label{ineq:grad_tracking_der-1} 
	\begin{aligned}
		&\quad f(\bar x_{k+1})\\
		&\leq  f(\bar{x}_k) + \underbrace{\inp{\hat g_k}{\hat x_{k+1} -\hat x_{k} } }_{:=b_1} + \frac{\beta ^2 L_G}{2}\normfro{\hat g_k}^2+ \underbrace{\frac{1}{ \beta ^2 L_G}( \normfro{\hat x_{k}  - \bar x_k}^2+    \normfro{ \bar x_{k+1} - \hat x_{k+1}}^2)}_{:=b_2}  \\
		&\quad + \frac{ L_G}{ n}\normfro{\X_k-\bar\X_k}^2 +  \underbrace{  \frac{3L_G}{4}  \normfro{\bar x_{k+1} - \bar x_k}^2}_{:=b_3}.
	\end{aligned}
	\ee
	Firstly, we have
	\be\label{ineq:grad_tracking_der-3-1} 
	\begin{aligned}	 
		b_1& =  \inp{  \hat g_k}{\hat x_{k+1} - \hat x_k - \beta \hat g_k + \beta \hat g_k}   \\
		& = -\beta \normfro{\hat g_k}^2 + \inp{\hat g_k}{\frac{1}{n}\sum_{i=1}^n[x_{i,k+1} - (x_{i,k}-\beta v_{i,k} - \alpha \grad \varphi_i^t(\X_{k})) ]} \\
		&\quad + \inp{\hat g_k}{  \frac{1}{n}\sum_{i=1}^n [\beta(y_{i,k} -    v_{i,k}) - \alpha \grad \varphi_i^t(\X_{k})] }.
	\end{aligned}\ee
	 
Since $y_{i,k} - v_{i,k} \in N_{x_{i,k}}\M$, it follows that
\begin{align*} 
&\quad \inp{\hat g_k}{  \frac{\beta}{n}\sum_{i=1}^n(y_{i,k} -    v_{i,k}) - \alpha \grad \varphi_i^t(\X_{k}) } \\
& \stackrel{\eqref{ineq:bound-of-sum-gradh}}{\leq}\frac{\beta}{n} \sum_{i=1}^n \inp{\hat g_k - \grad f_i(x_{i,k})}{y_{i,k} -    v_{i,k}}  +  \frac{2\alpha}{n} \normfro{\hat g_k}\cdot \normfro{\X_k - \bar \X_k}^2  \\
& \leq  \frac{1}{4nL_G} \sum_{i=1}^n \normfro{\hat g_k - \grad  f_i(x_{i,k})}^2 + \frac{\beta^2L_G}{n}\sum_{i=1}^n\normfro{\p_{N_{x_{i,k}}} y_{i,k}}^2 + \frac{2\alpha D}{n}   \normfro{\X_k - \bar \X_k}^2\\
&\leq  \frac{1}{4n^2L_G} \sum_{i=1}^n \sum_{j=1}^n\normfro{\grad f_j(x_{j,k}) - \grad  f_i(x_{i,k})}^2 + \frac{\beta^2L_G}{n}\normfro{\Y_k}^2 + \frac{2\alpha D}{n}   \normfro{\X_k - \bar \X_k}^2\\
&\leq \frac{L_G+2\alpha D}{n}\normfro{\X_k - \bar \X_k}^2 + \frac{\beta^2L_G}{n} \normfro{\Y_k}^2,
\end{align*}
where we use \cref{lem:lipschitz} in the last inequality. This, together with \eqref{ineq:grad_tracking_der-3-1} and \eqref{ineq:bound-of-sum-gradh} implies
\be\label{ineq:grad_tracking_der-3} 
\begin{aligned}
 b_1	&\leq  -\beta \normfro{\hat g_k}^2 + \inp{\hat g_k}{\frac{1}{n}\sum_{i=1}^n[x_{i,k+1} - (x_{i,k}-\beta v_{i,k} - \alpha \grad \varphi_i^t(\X_{k})) ]} + \frac{L_G + 2D\alpha}{n}\normfro{\X_k - \bar \X_k}^2 + \frac{\beta^2L_G}{n} \normfro{\Y_k}^2  \\
	&\leq  -\beta \normfro{\hat g_k}^2 +\frac{D}{n}\sum_{i=1}^{n}\normfro{ x_{i,k} - \alpha \grad \varphi_i^t(\X_{k}) - \beta v_{i,k} - x_{i,k+1} }+   \frac{L_G  + 2D\alpha}{n}\normfro{\X_k - \bar \X_k}^2 + \frac{\beta^2L_G}{n} \normfro{\Y_k}^2  \\
	&\stackrel{\eqref{ineq:ret_second-order}}{\leq} -\beta \normfro{\hat g_k}^2 + \frac{MD}{n}\sum_{i=1}^{n}\normfro{  \alpha \grad \varphi_i^t(\X_{k}) + \beta v_{i,k}   }^2 +   \frac{L_G + 2D\alpha}{n}\normfro{\X_k - \bar \X_k}^2 + \frac{\beta^2L_G}{n} \normfro{\Y_k}^2 \\
	&\stackrel{\eqref{ineq:bound-of-sum-gradh}}{\leq}  -\beta \normfro{\hat g_k}^2 +\frac{2MD\alpha^2}{n} \normfro{   \grad \varphi^t(\X_{k}) }^2 + \frac{2MD \beta^2}{n}\normfro{\Y_k}^2+   \frac{L_G  + 2D\alpha}{n}\normfro{\X_k - \bar \X_k}^2 + \frac{\beta^2L_G}{n} \normfro{\Y_k}^2\\
	&\stackrel{\eqref{ineq:bound-of-gradh}}{\leq} -\beta \normfro{\hat g_k}^2 + \frac{8MD\alpha^2 + 2 D\alpha + L_G}{n} \normfro{\X_k-\bar\X_k}^2  + \frac{(2M D+L_G)\beta^2}{n}\normfro{\Y_k}^2,
\end{aligned}\ee
where we use $\normfro{\hat g_k} \leq D$. 
 
	Secondly, we use the following inequality to derive the upper   bound of $b_2$.
	From \cref{lem:uniform bound y}, we have $\X_{k+1}\in\N$. One has
	\be\label{ineq:grad_tracking_der-2} 
	\begin{aligned}
	    &\quad  \normfro{\bar x_k - \hat x_k}^2 + \normfro{\bar x_{k+1} - \hat x_{k+1}}^2  \\
		&\stackrel{\eqref{key}}{\leq }\frac{4r}{n^2}(\normfro{\X_k-\bar\X_k}^4 + \normfro{\X_{k+1}-\bar\X_{k+1}}^4).
	\end{aligned}\ee
	We then obtain
		
	\begin{align*}
	  b_2 \leq\frac{4r}{n^2\beta^2L_G}(\normfro{\X_k-\bar\X_k}^4 + \normfro{\X_{k+1}-\bar\X_{k+1}}^4).
	\end{align*} 

	 Thirdly, invoking \cref{lem:bound_of_k_k+1} and $\alpha\leq 1/M$ yields
	 \begin{align*}
	 \normfro{\bar x_k - \bar{ x}_{k+1}}
	 &\leq \frac{1}{1-2\delta_1^2}\left[ \frac{10 \alpha}{n} \normfro{\X_k-\bar\X_k}^2 +   \frac{ 2M \beta^2 }{n} \normfro{\Y_k}^2 + \beta\normfro{\hat v_k}\right].
	 \end{align*}
	 Then, it  follows from $ \beta \normfro{\Y_k}\leq \frac{ \alpha \delta_1}{5 } $ that  
	 \begin{align*}
	 &\quad b_3\leq \frac{3L_G}{4}\left(  \frac{2}{(1-2\delta_1^2)^2}\left[ \frac{10 \alpha}{n} \normfro{\X_k-\bar\X_k}^2 +  \frac{ 2M \beta^2 }{n} \normfro{\Y_k}^2   \right]^2  + \frac{2}{(1-2\delta_1^2)^2} \beta^2 \normfro{\hat v_k}^2\right) \\
	 &\leq  \frac{3L_G}{(1-2\delta_1^2)^2}\left[ \frac{100 \alpha^2}{n^2} \normfro{\X_k-\bar\X_k}^4 +  \frac{(  M\alpha \delta_1\beta)^2 }{10n}\normfro{\Y_k}^2  \right]  + \frac{3L_G}{(1-2\delta_1^2)^2} \beta^2 (\normfro{\hat y_k}^2+ \normfro{\hat v_k-\hat y_k}^2)\\
	 &\leq  \frac{3L_G}{(1-2\delta_1^2)^2}\left[ \frac{100\alpha^2}{n^2} \normfro{\X_k-\bar\X_k}^4 +  \frac{(  M\alpha \delta_1\beta)^2 }{10n}\normfro{\Y_k}^2  \right]  + \frac{3L_G}{(1-2\delta_1^2)^2} \beta^2 (\normfro{\hat g_k}^2+ \frac{1}{n}\normfro{\Y_k}^2),
	 \end{align*}
    where we use $\hat y_k = \hat g_k$ and $\normfro{\hat v_k - \hat y_k}^2\leq \frac{1}{n}\normfro{\p_{N_{x_{i,k}}} y_{i,k}}^2 \leq \frac{1}{n}\normfro{\Y_k}^2.$ It follows from \eqref{ienq:bound_x_gt} that
    \[ \normfro{\X_k - \bar\X_k}^2 \leq  C_1(L_G+2D)^2\beta^2\leq \frac{C_1\alpha^2\delta_1^2}{25}, \] 
     where we use $\beta\leq \frac{\alpha \delta_1}{5(L_G + 2D)}$.
     Therefore, we get
     \be\label{ineq:grad_tracking_der-2-1} \begin{aligned}
    b_2\leq \frac{4r(L_G+ 2D)^2C_1 }{n L_G  }(\normfro{\X_k-\bar\X_k}^2 + \normfro{\X_{k+1}-\bar\X_{k+1}}^2).
    \end{aligned}\ee
    and 
    \be\label{ineq:grad_tracking_der-2-3}\begin{aligned}
    	b_3&\leq  \frac{3L_G}{(1-2\delta_1^2)^2}\left[ \frac{4C_1\delta_1^2 \alpha^4}{n } \normfro{\X_k-\bar\X_k}^2 +  \frac{ \delta_1^2 }{10n}\beta^2\normfro{\Y_k}^2  \right]  + \frac{3L_G}{(1-2\delta_1^2)^2} \beta^2 (\normfro{\hat g_k}^2+ \frac{1}{n}\normfro{\Y_k}^2)\\
    	&\leq \frac{13L_G C_1  \delta_1^2 \alpha^4}{n}\normfro{\X_k-\bar\X_k}^2 +\frac{7}{2} L_G \beta^2 \normfro{\hat g_k}^2 +   \frac{  \frac{\delta_1^2}{2}+  4  }{ n}L_G \beta^2\normfro{\Y_k}^2,
    \end{aligned}\ee
    where we use  $\alpha \leq \frac{1}{M}$ and $\frac{1}{(1-2\delta_1^2)^2}\leq 1.002$. 
	 Therefore, by combining the upper bound of $b_1,b_2,b_3$ with \eqref{ineq:grad_tracking_der-1} implies
	 \begin{align*}
	 &\quad f(\bar x_{k+1})\leq f(\bar{x}_k) + {b_1} + \frac{\beta ^2 L_G}{2}\normfro{\hat g_k}^2+  {b_2}   + \frac{ L_G}{ n}\normfro{\X_k-\bar\X_k}^2 +  {b_3}\\
	 &\leq  f(\bar{x}_k)  -(\beta -  4L_G \beta^2 ) \normfro{\hat g_k}^2 + \frac{ L_G+ \frac{4r(L_G + 2D)^2C_1}{L_G}  + 2 D\alpha +8MD\alpha^2 +13L_G C_1 \delta_1^2 \alpha^4 }{n} \normfro{\X_k-\bar\X_k}^2  \\
	 &\quad + \frac{4r(L_G + 2D)^2 C_1 }{n L_G } \normfro{\X_{k+1}-\bar\X_{k+1}}^2  + \frac{2M D+ ( \frac{\delta_1^2}{2}+  5)L_G}{n}\beta^2\normfro{\Y_k}^2  .
	 \end{align*} 
	 The proof is completed.

\end{proof}

To proceed, we need the following recursive lemma, which is helpful to combine \cref{lem:error-bounds-gradient-tracking} and \cref{lem:descent-grad-tracking}.  It is a little different from the original one in \cite{xu2015augmented}. We only change   $\sqrt{\sum_{l=0}^k u_i^2}$ and $\sqrt{ \sum_{l=0}^k w_i^2}$ to be $ {\sum_{l=0}^k u_i^2}$ and $ { \sum_{l=0}^k w_i^2}$.  
\begin{lemma}\cite[Lemma 2]{xu2015augmented}\label{lem:gt-recursion-lem}
	Let $\{u_k\}_{k\geq 0}$ and $\{w_k\}_{k\geq 0}$ be two positive scalar sequences such that for all $k\geq 0$ 
	\[  u_{k+1}\leq \eta u_{k} + w_k,  \]
	where $\eta\in(0,1)$ is the decaying factor. Let $\Gamma(k) = {\sum_{l=0}^k u_i^2}$ and   $\Omega(k) = {\sum_{l=0}^k w_i^2}.$ Then we have
	\[ \Gamma(k) \leq c_0 \Omega(k) + c_1, \]
	where $c_0 = \frac{ {2}}{(1-\eta)^2}$ and $c_1 =  \frac{2}{1-\eta^2} u^2_0.$
\end{lemma}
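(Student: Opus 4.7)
The plan is to unroll the linear recursion $u_{k+1}\le \eta u_k + w_k$ into a closed-form bound, then apply Cauchy--Schwarz in weighted form, and finally swap the order of summation in the resulting double sum to separate the $u_0$ contribution from the $w_\ell$ contribution. The whole argument is purely scalar; nothing specific to the manifold problem enters.

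First, I would iterate the recursion starting from $u_0$ to obtain
\[
u_k \le \eta^k u_0 + \sum_{l=0}^{k-1}\eta^{k-1-l} w_l, \qquad k\ge 1,
\]
which is immediate by induction using $u_{k+1}\le \eta u_k + w_k$ and the positivity of $\eta, u_k, w_k$. Squaring with the elementary inequality $(a+b)^2\le 2a^2+2b^2$ gives
\[
u_k^2 \le 2\eta^{2k} u_0^2 + 2\Bigl(\sum_{l=0}^{k-1}\eta^{k-1-l} w_l\Bigr)^2.
\]
Next, I would handle the squared sum by writing $\eta^{k-1-l} = \eta^{(k-1-l)/2}\cdot \eta^{(k-1-l)/2}$ and applying Cauchy--Schwarz, which yields
\[
\Bigl(\sum_{l=0}^{k-1}\eta^{k-1-l} w_l\Bigr)^2 \le \Bigl(\sum_{l=0}^{k-1}\eta^{k-1-l}\Bigr)\Bigl(\sum_{l=0}^{k-1}\eta^{k-1-l} w_l^2\Bigr) \le \frac{1}{1-\eta}\sum_{l=0}^{k-1}\eta^{k-1-l} w_l^2,
\]
using the geometric series bound $\sum_{l=0}^{k-1}\eta^{k-1-l}\le \tfrac{1}{1-\eta}$.

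Finally, I would sum the resulting inequality over $k=0,\dots,K$. For the geometric part, $\sum_{k=0}^K \eta^{2k}\le \tfrac{1}{1-\eta^2}$. For the double sum, swapping the order of summation,
\[
\sum_{k=0}^K\sum_{l=0}^{k-1}\eta^{k-1-l} w_l^2 = \sum_{l=0}^{K-1} w_l^2 \sum_{k=l+1}^{K}\eta^{k-1-l} \le \frac{1}{1-\eta}\sum_{l=0}^{K-1} w_l^2 \le \frac{1}{1-\eta}\,\Omega(K).
\]
Combining these bounds yields exactly
\[
\Gamma(K) \le \frac{2}{1-\eta^2}u_0^2 + \frac{2}{(1-\eta)^2}\,\Omega(K),
\]
matching $c_0=\tfrac{2}{(1-\eta)^2}$ and $c_1=\tfrac{2}{1-\eta^2}u_0^2$.

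The only subtle step is the weighted form of Cauchy--Schwarz: applying the inequality naively (e.g.\ without splitting $\eta^{k-1-l}$ into two halves) would lose a factor and produce something like $k/(1-\eta)$ instead of $1/(1-\eta)^2$, which would prevent the final geometric swap from closing. Besides this, the proof is mechanical, and no use of $u_k, w_k\ge 0$ beyond what is given is required.
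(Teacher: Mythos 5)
Your proof is correct. Note that the paper does not actually prove this lemma: it cites it directly from [Lemma 2, Xu et al.\ 2015], remarking only that the squared (rather than square-rooted) sums are used. Your argument --- unrolling the recursion, squaring with $(a+b)^2\le 2a^2+2b^2$, applying the weighted Cauchy--Schwarz inequality after splitting $\eta^{k-1-l}$ into two half-powers, and then exchanging the order of summation --- is the standard derivation and recovers exactly the constants $c_0=\tfrac{2}{(1-\eta)^2}$ and $c_1=\tfrac{2}{1-\eta^2}u_0^2$, so it serves as a valid self-contained proof of the cited result.
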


\begin{proof}[\textbf{Proof of \cref{thm:theorem-convergence-grad-tracking}}]
	
	Applying \cref{lem:gt-recursion-lem} to \eqref{ineq:Successive consensus error} yields
	\begin{align}\label{ineq:sum_sucessive_consensus-error}
	\frac{1}{n}\sum_{k=0}^{K}\normfro{\X_k - \bar\X_k}^2\leq  \tilde{C}_0 \cdot	\frac{\beta^2}{n} \sum_{k=0}^{K}\normfro{\Y_k}^2 + \tilde{C}_1,
	\end{align}
	where $\tilde{C}_0 = \frac{2}{(1-\rho_t)^2}$ and $\tilde{C}_1 = \frac{2}{1-\rho_t^2} 	\frac{1}{n}\normfro{\X_0 - \bar \X_0}^2$. 

	It follows from \cref{lem:descent-grad-tracking} that
	\be\label{ineq:thm2-eq0}
	\begin{aligned}
		&\quad  f(\bar x_{K+1}) \\
		&\leq  f(\bar{x}_0)  - (\beta -  4L_G \beta^2 ) \sum_{k=0}^{K} \normfro{\hat g_k}^2 +  \frac{\G_1 L_G}{n}\sum_{k=0}^{K} \normfro{\X_k-\bar\X_k}^2  +   \frac{\G_0 L_G}{n}\sum_{k=1}^{K+1} \normfro{\X_k-\bar\X_k}^2  +     \frac{\G_2 L_G }{ n} \beta^2  \sum_{k=0}^{K}\normfro{\Y_k}^2\\	
	&\stackrel{\eqref{ineq:sum_sucessive_consensus-error}}{\leq} f(\bar x_0) - (\beta -  4L_G \beta^2 ) \sum_{k=0}^{K} \normfro{\hat g_k}^2 
	+  (  \G_1\tilde{C}_0 +\G_0\tilde{C}_0  + \G_2  ) \frac{ L_G \beta^2}{n}\sum_{k=0}^{K}\normfro{\Y_k}^2   + \frac{\G_0\tilde{C}_0L_G\beta^2\normfro{\Y_{K+1} }^2}{n}  + \tilde{C}_1(\G_1+\G_0) L_G\\
	& \leq   f(\bar x_0) - \frac{\beta}{2}  \sum_{k=0}^{K} \normfro{\hat g_k}^2 
	+     \G_3 \frac{ L_G \beta^2}{n}\sum_{k=0}^{K}\normfro{\Y_k}^2 +\G_4 L_G,
	\end{aligned}\ee
  where  we use $\beta\leq \min\{ \frac{1}{8L_G}, \frac{\alpha\delta_1}{5(L_G+2D)}\}$,  $\beta^2\normfro{\Y_{K+1} }^2 \leq n ( L_G+2D)^2\beta^2\leq \frac{\delta_1^2\alpha^2 n}{25} $ and $\G_3: =\G_1\tilde{C}_0 +\G_0\tilde{C}_0  + \G_2  $ and $\G_4 := \frac{ \G_0\tilde{C}_0  \delta_1^2\alpha^2  }{25}  + \tilde{C}_1(\G_1+4rC_1)  $ in the last inequality.
  
	We are going to associate $\normfro{\hat g_k}^2$ with $\normfro{\Y_k}^2$.  By \eqref{ineq:Bound-of-Y}, we get
	\begin{align}\label{ienq:thm2-eq1}
	-	\sum_{k=0}^{K}\normfro{\hat g_k}^2=-\frac{1}{n}	\sum_{k=0}^{K}\normfro{\hat \bG_k}^2\leq  \frac{1}{n}\sum_{k=0}^{K}\normfro{\Y_k - \hat \bG_k}^2 - \frac{1}{2n} \sum_{k=0}^{K}\normfro{\Y_k}^2
	\end{align}
	Again, applying \cref{lem:gt-recursion-lem} to \eqref{ineq: Successive tracking error} yields
	\begin{align*}
	\frac{1}{n}\sum_{k=0}^{K}\normfro{\Y_k - \hat \bG_k}^2&\leq   \tilde{C}_2 	\frac{1}{n}\sum_{k=0}^{K}\normfro{\bG_{k+1} - \bG_{k}}^2 + \tilde{C}_3\\
	&\stackrel{\eqref{ineq: Successive gradient error}}{\leq}   \tilde{C}_2	\frac{1}{n}\sum_{k=0}^{K}(8\alpha^2 L_G^2\normfro{\X_k - \bar \X_k}^2 + 2\beta^2 L_G^2\normfro{\Y_k}^2 ) + \tilde{C}_3\\
	&\stackrel{\eqref{ineq:sum_sucessive_consensus-error}}{\leq }(8\alpha^2 \tilde{C}_0\tilde{C}_2  +  2\tilde{C}_2 )L_G^2  \beta^2 \frac{1}{n} \sum_{k=0}^{K}\normfro{\Y_k}^2 + 8\alpha^2\tilde{C}_1\tilde{C}_2L_G^2 +\tilde{C}_3\\
	&\leq (8 \tilde{C}_0   +  \frac{1}{2} \tilde{C}_2 ) \alpha \delta_1 L_G   \beta  \frac{1}{n} \sum_{k=0}^{K}\normfro{\Y_k}^2 + 8\alpha^2\tilde{C}_1\tilde{C}_2L_G^2 +\tilde{C}_3,
	\end{align*}  
	where $\tilde{C}_2 = \frac{2}{(1-\sigma_2^t)^2}$ and $\tilde{C}_3 = \frac{2}{1-\sigma_2^{2t}} \cdot	\frac{1}{n}\normfro{\Y_0 - \hat\bG_0}^2$. The last line is due to $\beta\leq \frac{\alpha\delta_1}{5L_G}$ and  $\alpha^2\tilde{C}_2\leq \tilde{C}_2\leq \frac{2}{(1-\frac{1}{2\sqrt{n}})^2}\leq  5$.  Plugging this into \eqref{ienq:thm2-eq1} implies
	\begin{align} \label{ienq:thm2-eq2}
	-	\sum_{k=0}^{K}\normfro{\hat g_k}^2&\leq  \left[(8 \tilde{C}_0   +  \frac{1}{2} \tilde{C}_2 ) \alpha \delta_1L_G   \beta- \half \right]  \frac{1}{n} \sum_{k=0}^{K}\normfro{\Y_k}^2   +  8\alpha^2\tilde{C}_1\tilde{C}_2L_G^2 +\tilde{C}_3.
	\end{align}
	Hence,  it follows from \cref{ineq:thm2-eq0} that
	\be\label{ineq:thm2-eq3}
	\begin{aligned}
		&\quad  f(\bar x_{K+1}) \\
		&\stackrel{\eqref{ienq:thm2-eq2}}{\leq} f(\bar x_0) -     \frac{ \beta}{2}   \left( \frac{1}{2} -  \left[ 2 \G_3  + (8 \tilde{C}_0   +  \frac{1}{2} \tilde{C}_2 ) \alpha \delta_1\right]L_G   \beta\right) 
		 \frac{1}{n} \sum_{k=0}^{K}\normfro{\Y_k}^2    +    \frac{ \beta}{2}  \left(8\alpha^2\tilde{C}_1\tilde{C}_2L_G^2 +\tilde{C}_3 \right)  +   \G_4 L_G\\
		&\leq f(\bar x_0) -\frac{\beta}{8} \frac{1}{n} \sum_{k=0}^{K}\normfro{\Y_k}^2   +  \frac{ \beta }{2}\left(8\alpha^2\tilde{C}_1\tilde{C}_2L_G^2 +\tilde{C}_3 \right)  +    \G_4 L_G\\
	\end{aligned}\ee
	where the last inequality is due to $\beta\leq \displaystyle \frac{1}{4L_G(2 \G_3  + (8 \tilde{C}_0   +  \frac{1}{2} \tilde{C}_2 ) \alpha \delta_1)}$.
	
	Then, we get
	\be\label{ineq:thm2-eq4}
	\begin{aligned}
	\frac{\beta }{8} \sum_{k=0}^K \normfro{\hat g_k}^2\leq	\frac{\beta }{8} \cdot \frac{1}{n} \sum_{k=0}^{K}\normfro{\Y_k}^2 \leq  f(\bar x_0) -  f^*  +   \tilde{C}_4 +    \G_4 L_G,
	\end{aligned}\ee   
	where $\tilde{C}_4 = ( 8\alpha^2\tilde{C}_1\tilde{C}_2L_G^2 +\tilde{C}_3)\frac{\beta}{2}=\mathcal{O}(\frac{r \delta_1^2 L_G}{(1-\sigma_2^t)^2})$ and $f^*=\min_{x\in\St(d,r)}f(x)$. 
	This implies
	\be\label{ineq:convergence_Y} \min_{k=0,\ldots,K}  \normfro{\hat  g_k}^2 = \min_{k=0,\ldots,K}  \normfro{\hat  y_k}^2 \leq  \min_{k=0,\ldots,K} \frac{1}{n}\normfro{\Y_k}^2\leq  \frac{8(f(\bar x_0) -  f^*  +   \tilde{C}_4 +    \G_4 L_G )}{\beta\cdot K}.  \ee
	It  then follows from \eqref{ineq:sum_sucessive_consensus-error} that
	\begin{align*} 
	\min_{k=0,\ldots,K}\frac{1}{n} \normfro{\X_k - \bar\X_k}^2\leq   \frac{8\beta(f(\bar x_0) -  f^*  +   \tilde{C}_4+    \G_4L_G  )\tilde{C}_0 + \tilde{C}_1}{  K}.
	\end{align*}
	Finally, noticing $\beta\leq \frac{\alpha \delta_1}{5L_G}$ and 
	\[ \normfro{\grad f(\bar x_k)}^2 \leq 2\normfro{\hat g_k}^2 + 2\normfro{\grad f(\bar x_k)-\hat g_k}^2 \leq 2\normfro{\hat g_k}^2 + \frac{2L_G^2}{n}\normfro{\X_k-\bar\X_k}^2. \]
	We finally have
	\begin{align*}
	\min_{k=0,\ldots,K} \normfro{\grad f(\bar x_k)}^2 \leq \frac{(16+ \alpha^2 \delta_1^2\tilde{C}_0)(f(\bar x_0) -  f^*  +   \tilde{C}_4+     \G_4 L_G ) + \tilde C_1 L_G}{\beta\cdot K}.
	\end{align*}
	The proof is completed.
\end{proof}

\end{document}